\numberwithin{equation}{section}
\definecolor{darkred}{rgb}{1,0,0}
\definecolor{darkgreen}{rgb}{0,0.8,0}
\definecolor{darkblue}{rgb}{0,0,1}
\newtheorem{thm}{Theorem}[section]
\newtheorem{lem}[thm]{Lemma}
\newtheorem{prop}[thm]{Proposition}
\newtheorem{maintheorem}{Theorem}
\theoremstyle{definition}
\newtheorem{rem}[thm]{Remark}
\newcommand{\CP}{\C\PP}
\newcommand{\G}{\mathcal{G}}
\newcommand{\HH}{\mathcal{H}}
\newcommand{\Sp}{\mathrm{Sp}}
\newcommand{\Sym}{\mathrm{Sym}}
\newcommand{\fix}{\mathrm{fix}}
\newcommand{\A}{\mathcal{A}}
\newcommand{\N}{\mathds{N}}
\newcommand{\Z}{\mathds{Z}}
\newcommand{\R}{\mathds{R}}
\newcommand{\Q}{\mathds{Q}}
\newcommand{\PP}{\mathds{P}}
\newcommand{\C}{\mathds{C}}
\newcommand{\crit}{\mathrm{crit}}
\newcommand{\diff}{\mathrm{d}}
\newcommand{\odd}{\mathrm{odd}}
\newcommand{\ind}{\mathrm{ind}}
\newcommand{\id}{\mathrm{id}}
\newcommand{\nul}{\mathrm{nul}}
\newcommand{\tor}{\mathrm{tor}}
\newcommand{\Tan}{\mathrm{T}}
\newcommand{\equaldown}{\arrow[d, no tail, no head, shift left =0.4]\arrow[d, no tail, no head, shift right =0.4]}
\renewcommand{\twoheadrightarrow}{\mathrel{\mathrlap{\rightarrow}\mkern-2.3mu\rightarrow}}
\DeclareRobustCommand{\llongrightarrow}{\relbar\joinrel\relbar\joinrel\rightarrow}
\DeclareRobustCommand{\longepi}{\relbar\joinrel\twoheadrightarrow}
\DeclareRobustCommand{\llongepi}{\relbar\joinrel\relbar\joinrel\twoheadrightarrow}
\DeclareMathOperator{\rank}{\mathrm{rank}} 
\DeclareMathOperator*{\toup}{\longrightarrow} 
\DeclareMathOperator*{\ttoup}{\llongrightarrow}
\DeclareMathOperator*{\epi}{\longepi} 
\DeclareMathOperator*{\eepi}{\llongepi}
\begin{document}

\title{On the structure of Besse convex contact spheres}

\author{Marco Mazzucchelli}
\address{Marco Mazzucchelli\newline\indent CNRS, UMPA, \'Ecole Normale Sup\'erieure de Lyon, 69364 Lyon, France}
\email{marco.mazzucchelli@ens-lyon.fr}

\author{Marco Radeschi}
\address{Marco Radeschi\newline\indent Department of Mathematics, University of Notre Dame, IN 46556, USA}
\email{mradesch@nd.edu}

\date{December 10, 2020}

\subjclass[2010]{53D10, 58E05}

\keywords{Closed Reeb orbits, convex contact spheres, Besse contact manifolds, Ekeland-Hofer spectral invariants}

\begin{abstract}

We consider convex contact spheres $Y$ all of whose Reeb orbits are closed. Any such $Y$ admits a stratification by the periods of closed Reeb orbits. We show that $Y$ ``resembles'' a contact ellipsoid: any stratum of $Y$ is an integral homology sphere, and the sequence of Ekeland-Hofer spectral invariants of $Y$ coincides with the full sequence of action values, each one repeated according to its multiplicity. 
\end{abstract}

\maketitle

\vspace{-5pt}

\section{Introduction}
\label{s:introduction}

We recall that a contact manifold is a $(2n-1)$-dimensional manifold $Y$ equipped with a one-form $\alpha$, called the contact form, such that $\alpha\wedge(\diff\alpha)^{n-1}$ is nowhere vanishing. The associated Reeb flow $\psi^t:Y\to Y$ is obtained by integrating the Reeb vector field $R$, which is defined by $\alpha(R)\equiv1$ and $\diff\alpha(R,\cdot)\equiv0$. Reeb flows generalize, for instance, the geodesic flows on the unit tangent bundle of Riemannian manifolds.

A contact manifold $(Y,\alpha)$ is called \textbf{Besse} when every orbit of the Reeb flow is closed. In the Riemannian case, this reduces to the notion of Riemannian manifold all of whose geodesics are closed, a topic of great interest in Riemannian geometry, see for example the influential monograph by Besse \cite{Besse:1978pr} after whom these spaces are named, and more recent developments \cite{Pries:2009aa,Lin:2017aa,Radeschi:2017dz,Abbondandolo:2017xz,Mazzucchelli:2018aa,Mazzucchelli:2018pb}.
In the general setting, Besse manifolds were first studied by Thomas \cite{Thomas:1976aa}, who extended earlier work of Boothby-Wang \cite{Boothby:1958ss} and called them almost regular contact manifolds.

In this paper, we focus on contact manifolds $Y$ that are convex spheres. These are the smooth boundaries of convex compact neighborhoods $B\subset\R^{2n}$ of the origin, and come naturally equipped with the contact form $\alpha=\lambda|_Y$ that is the restriction of the 1-form on $\R^{2n}$ given by
\begin{align*}
 \lambda=\frac12\sum_{i=1}^n \Big( x_i\,\diff y_i - y_i\,\diff x_i \Big).
\end{align*}
The only known examples of Besse convex contact spheres are the rational ellipsoids
\begin{align*}
 E(a_1,\ldots,a_n)
 =
 \left\{ 
 z=(z_1,\ldots,z_n)\in\C^{n}\ 
 \bigg|\ 
 \sum_{i=1}^n\frac{|z_i|^2}{a_i}=\frac1\pi
 \right\},
\end{align*}
where $a_1\leq a_2\leq \ldots\leq a_n$ are positive real numbers with $\tfrac{a_i}{a_j}\in \Q$ for all $i, j=1\ldots n$.
The results in this paper show that a general Besse convex contact sphere $Y$ always ``resembles'', in several aspects, a rational ellipsoid.

The first resemblance concerns the stratification induced by the Reeb flow. We recall that, on any Besse contact manifold $Y$,
a theorem due to Wadsley \cite{Wadsley:1975sp} guarantees that the Reeb flow $\psi^t$ is itself periodic, that is, $\psi^\tau=\id$ for some minimal common period $\tau>0$. Therefore, $\psi^t$ defines a locally free circle action on $Y$. However, different orbits may have different minimal periods: for any positive integer $k>0$, the subspace
\begin{align*}
 Y_k:=\fix(\psi^{\tau/k})=\big\{ z\in Y\ \big|\ \psi^{\tau/k}(z)=z \big\}
\end{align*}
is a (possibly empty) closed contact submanifold of $Y$. A priori, $Y_k$ may be disconnected, and different connected components may have different dimension. The family of subspaces $Y_k$ defines a stratification of $Y$.

On the ellipsoid $E=E(a_1,\ldots,a_n)\subset\R^{2n}\equiv\C^n$, the Reeb flow is given by
\begin{align*}
 \psi^t(z_1,\ldots,z_n)=(e^{i2\pi t/a_1}z_1,\ldots,e^{i2\pi t/a_n}z_n).
\end{align*}
If $E$ is a rational ellipsoid, the least common period $\tau$ of the periodic Reeb orbits is precisely the least common multiple of $a_1,\ldots,a_n$, and the strata $E_k=\fix(\psi^{\tau/k})$ are the sub-ellipsoids
\[
 E_k=\big\{ (z_1,\ldots,z_n)\in E\ \big|\ z_i=0\mbox{ if }\tfrac\tau {k a_i}\not\in\N \big\}.
\]
Our first result describes the topology of the strata of general convex contact spheres.

\begin{maintheorem}\label{t:main}
Let $Y$ be a Besse convex contact sphere. For each $k\in\N$, the stratum $Y_k$ is either empty or an integral homology sphere, i.e.\ $H_*(Y_k;\Z)\cong H_{*}(S^{2d+1};\Z)$, where $2d+1=\dim (Y_k)$.
\end{maintheorem}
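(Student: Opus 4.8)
The plan is to exploit the locally free circle action $\psi^t$ on $Y$ and reduce the statement to a classical fact about fixed-point sets of circle actions on homotopy spheres, combined with the convexity input that controls the topology of $Y$ itself. First I would recall that a convex contact sphere $Y$ is diffeomorphic to $S^{2n-1}$ (and, more usefully for homology computations, the disk bundle it bounds is contractible), so $Y$ is an integral homology sphere; the case $k=1$, where $Y_1=Y$, is thus immediate. For general $k$, observe that $Y_k=\fix(\psi^{\tau/k})$ is the fixed-point set of the finite cyclic group $\Z/k$ generated by $\psi^{\tau/k}$ acting on $Y$, and simultaneously $Y_k$ is invariant under the residual circle action, on which $S^1$ acts again locally freely.

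The key step is a Smith-theory / localization argument. For the prime case, if $k=p$ is prime, then $\Z/p$ acts on the integral homology sphere $Y\simeq S^{2n-1}$, and by Smith theory the fixed-point set $Y_p$ is a $\Z/p$-homology sphere; one then upgrades this to an \emph{integral} homology sphere using the free circle action on the complement (or on $Y_p$ itself) to kill the possible $p$-torsion, via the Gysin sequence of the circle bundle $Y_p\to Y_p/S^1$ together with the fact that the orbifold quotient is simply connected enough — more precisely, I would run the Borel construction and compare $H^*_{S^1}(Y_p)$ with $H^*_{S^1}(Y)$, using that equivariant formality forces the Poincaré series to match those of a sphere. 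For composite $k$, I would induct on the number of prime factors: write $\psi^{\tau/k}$ as a power of $\psi^{\tau/m}$ for a suitable divisor $m$ of $k$, so that $Y_k\subset Y_m$ sits as the fixed-point set of a cyclic group acting on the (inductively) integral homology sphere $Y_m$, which itself carries a locally free circle action, and repeat the Smith-theory step inside $Y_m$.

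The main obstacle I anticipate is the passage from "$\Z/p$-homology sphere" to "integral homology sphere": Smith theory alone leaves open the presence of $p$-torsion and does not pin down the Betti numbers of $Y_p$ in the middle degrees (a priori $Y_p$ could be a $\Z/p$-homology sphere with extra rational homology). Both issues should be resolved by the circle action: the locally free $S^1$-action makes $Y_p$ a Seifert-type fibration over a simply connected (orbi)manifold, and the Gysin sequence, combined with the vanishing of $H^*(Y;\Q)$ in intermediate degrees transported through the inclusion $Y_p\hookrightarrow Y$ and the localization theorem in equivariant cohomology, should force $H^*(Y_p;\Q)$ to be that of a sphere and simultaneously rule out torsion. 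A clean way to package this is to note that the reduction of $Y$ by the circle action is an integral homology $\CP^{n-1}$-like orbifold and $Y_p$ maps to a suborbifold; chasing the long exact sequences of the pairs and the Gysin sequences degree by degree then yields $H_*(Y_p;\Z)\cong H_*(S^{2d+1};\Z)$. I would also need to check that $Y_k$ is connected (so that "homology sphere" is not vacuously violated by several components), which again follows from the connectedness of fixed-point sets of circle actions on simply connected spaces, applied inductively.
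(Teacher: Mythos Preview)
Your strategy abandons almost all of the convexity hypothesis: after noting that $Y\cong S^{2n-1}$, everything you propose depends only on the locally free $S^1$-action. If it worked, it would prove that for \emph{any} locally free smooth circle action on a sphere, the strata $\fix(\psi^{\tau/k})$ are integral homology spheres. The paper explicitly flags this as open (see the paragraph after the statement of the theorem), so your outline cannot be correct as written.

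Concretely, the gaps are these. Smith theory controls $\fix(\Z/k)$ only when $k$ is a prime power, and even then only with $\Z/p$-coefficients; your inductive reduction to prime steps needs the intermediate $Y_m$ to already be an \emph{integral} homology sphere, which is precisely the conclusion you are trying to reach, so the induction is circular unless you can do the integral upgrade at each stage. That upgrade is the real problem: a $\Z/p$-homology sphere with a locally free $S^1$-action need not, as far as is known, be an integral homology sphere, and neither the Gysin sequence of $Y_p\to Y_p/S^1$ nor the localization theorem gives you the missing information (the $S^1$-action on $Y$ is fixed-point free, so localization in $H^*_{S^1}$ says nothing useful, and the orbifold quotient $Y_p/S^1$ is not a priori simply connected). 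Your connectedness claim is also wrong as stated: fixed-point sets of circle actions on simply connected manifolds are not connected in general (rotate $S^2$), and the $Y_k$ are fixed by finite cyclic subgroups, not by $S^1$.

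The paper's proof goes through a completely different mechanism: the Clarke action functional $\Psi$ on an infinite-dimensional space $\Lambda$ whose $S^1$-equivariant cohomology is $\Q[e]$. The critical manifolds of $\Psi$ are exactly the strata $Y_k$ (at various iterates). Two hard facts---orientability of the negative bundles (Proposition~\ref{p:orientability}) and the vanishing of $H^{\odd}_{S^1}(K_j;\Q)$ (Proposition~\ref{p:zero_odd_cohomology}, proved via a delicate $p$-torsion argument of Radeschi--Wilking type)---make $\Psi$ equivariantly perfect. Perfectness forces the local cohomologies to fit together exactly into $\Q[e]$, which pins down $H^*_{S^1}(K_j;\Q)\cong\Q[e]/(e^{d_j})$ and in particular gives connectedness. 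The integral statement then follows from Lemma~\ref{l:local_cohomology} with $\Z$-coefficients and the contractibility of $\Lambda$: the only possible non-trivial integral cohomology of $K_j$ in intermediate degrees would have to show up in $H^*(\Lambda;\Z)=0$. So convexity is used essentially, via the existence and global properties of $\Psi$, not merely to identify $Y$ with a sphere.
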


To the best of our knowledge, it is unknown whether the same result is true for a general locally free $S^1$ action on a sphere $Y$. The classical Smith theorem \cite[Th.~III.5.1]{Bredon:1972aa} implies that, if $k$ is a power of a prime $p$, then the subspace $Y_k$, being the set fixed by the cyclic subgroup $\Z_k\subset S^1$, is either empty or a $\Z_p$ homology sphere (i.e.\ $H_*(Y_k;\Z_p)\cong H_*(S^{d};\Z_p)$ for some $d\geq0$). On the other hand, there are examples of manifolds equipped with a
smooth action by a finite cyclic group whose fixed point set is \emph{not} an integral homology sphere \cite{Schultz:1982aa}, although it is not known if these examples can arise as strata of smooth locally free circle actions.

The second resemblance to ellipsoids regards the Ekeland-Hofer spectral invariants \cite{Ekeland:1987aa}
\[c_0(Y)\leq c_1(Y)\leq c_2(Y)\leq \ldots,\]
which are suitably selected elements of the action spectrum 
\begin{align}
\label{e:action_spectrum} 
\sigma(Y)=\big\{ \tau>0\ \big|\ \fix(\psi^\tau)\neq\varnothing  \big\}.
\end{align}
The first such value $c_0(Y)$ is the systole of $Y$, namely the minimum element of the action spectrum $\sigma(Y)$. We will recall the definition of the higher ones in Section~\ref{s:spectral_invariants}. A joint result of the first author with Ginzburg-G\"urel \cite{Ginzburg:2019ab} implies that these spectral invariants determine the Besse property: a $(2n-1)$-dimensional convex contact sphere $Y$ is Besse if and only if $c_i(Y)=c_{i+n-1}(Y)$ for some $i\geq0$.

Let us recall the computation of the Ekeland-Hofer spectral invariants for ellipsoids $E=E(a_1,\ldots,a_n)$. The action spectrum $\sigma(E)$ is the set of multiples of the parameters $a_i$. Let $\sigma_1<\sigma_2<\sigma_3<\ldots$ be the elements of $\sigma(E)$ listed in increasing order. For each $i\geq0$, we denote by $d_i$ the number of $a_j$'s such that $\sigma_i/a_j\in\N$. These numbers are indeed related to the dimensions of the subsets $\fix(\psi^{\sigma_i})\subset E$, i.e.
\begin{align*}
 d_i:=(\dim(\fix(\psi^{\sigma_i}))+1)/2.
\end{align*}
The Ekeland-Hofer spectral invariant $c_i(Y)$ is the $(i+1)$-th element in the sequence
\[
 \underbrace{\sigma_1,\ldots,\sigma_1}_{\times d_1},
 \underbrace{\sigma_2,\ldots,\sigma_2}_{\times d_2},
 \underbrace{\sigma_3,\ldots,\sigma_3}_{\times d_3},
  \ldots
\]
see, e.g., \cite[Section~3.7]{Ginzburg:2019ab}.
Our second theorem, shows that the computation of the Ekeland-Hofer spectral invariants of Besse convex contact spheres is the same as the one of ellipsoids.

\begin{maintheorem}\label{t:spec}
Let $Y$ be a Besse convex contact sphere with Reeb flow $\psi^t$. Let 
$\sigma_1<\sigma_2<\sigma_3<\ldots$ 
be the elements of the action spectrum $\sigma(Y)$ listed in increasing order. The Ekeland-Hofer spectral invariant $c_i(Y)$ is the $(i+1)$-th element in the sequence
\begin{align*}
 \underbrace{\sigma_1,\ldots,\sigma_1}_{\times d_1},
 \underbrace{\sigma_2,\ldots,\sigma_2}_{\times d_2},
 \underbrace{\sigma_3,\ldots,\sigma_3}_{\times d_3},
  \ldots
\end{align*}
where\footnote{The statement implicitly uses the fact that $\fix(\psi^{\sigma_j})$ is path-connected, which follows from Theorem~\ref{t:main}.} $d_j=(\dim(\fix(\psi^{\sigma_j}))+1)/2$.
\end{maintheorem}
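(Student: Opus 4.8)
The plan is to…The plan is to combine the topological input of Theorem~\ref{t:main} with the variational/Morse-Bott structure of the Ekeland--Hofer functional on the convex domain $B$ bounded by $Y$. Recall that the Ekeland--Hofer invariants $c_i(Y)$ are min-max values of a functional $\Phi$ on (a suitable completion of) the loop space, invariant under the natural $S^1$-action coming from time-shift, and that the $i$-th invariant is defined via equivariant cohomology classes of degree $2i$ (pulled back from $\C\PP^\infty=BS^1$). Each critical level of $\Phi$ below a fixed cutoff corresponds to a period $\sigma_j\in\sigma(Y)$, and the critical set at level $\sigma_j$ is, up to the $S^1$-symmetry and iteration, the set of closed Reeb orbits of period $\sigma_j$, which fibers over $\fix(\psi^{\sigma_j})$. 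The content of the theorem is that passing a critical level $\sigma_j$ raises the index of the available equivariant cohomology classes by exactly $d_j=(\dim\fix(\psi^{\sigma_j})+1)/2$, i.e.\ the critical manifold contributes a ``cup-length jump'' of exactly $d_j$.

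First I would set up the local model near the critical level $\sigma_j$: by a Morse--Bott-type analysis of $\Phi$ (as in Ekeland--Hofer, and made clean in the Besse setting because the Reeb flow is periodic), the sublevel set $\{\Phi\le \sigma_j+\epsilon\}$ is obtained from $\{\Phi\le\sigma_j-\epsilon\}$ by attaching a bundle over the critical manifold $C_j$, where $C_j$ is an $S^1$-equivariant bundle over $\fix(\psi^{\sigma_j})$ (the fiber records the iteration multiplicity/phase, and the $S^1$ acts on it). Since all Reeb orbits are closed, the negative bundle has a definite rank determined by Conley--Zehnder-type indices; the key point is that the relevant equivariant Betti numbers of $C_j$ are controlled. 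Here Theorem~\ref{t:main} enters decisively: $\fix(\psi^{\sigma_j})$ is an integral homology sphere $S^{2d_j-1}$, so its free-ish $S^1$-quotient has the integral cohomology of $\C\PP^{d_j-1}$, and the equivariant cohomology $H^*_{S^1}(C_j;\Z)$ is that of $\C\PP^{d_j-1}$ as a module over $H^*(BS^1)=\Z[u]$ — in particular it is generated in degrees $0,2,\dots,2(d_j-1)$ and vanishes above degree $2(d_j-1)$ in the ``vertical'' direction. This is exactly the numerology that forces the min-max values $c_i$ to repeat $\sigma_j$ precisely $d_j$ times.

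The argument is then a two-sided estimate carried out level by level. For the upper bound $c_i(Y)\le \sigma_j$ (for the appropriate range of $i$): one exhibits nonzero equivariant cohomology classes of the required degree in $H^*_{S^1}(\{\Phi\le\sigma_j+\epsilon\})$ surviving from the attached piece, using that $H^*_{S^1}(C_j)$ carries classes in all even degrees up to $2(d_j-1)$ that restrict nontrivially; pushing these across the Morse--Bott handle (Thom isomorphism for the equivariant negative bundle) shows the min-max level does not exceed $\sigma_j$. For the lower bound $c_i(Y)\ge\sigma_j$ (equivalently, that $\sigma_{j-1}$ cannot be repeated too many times): one shows that below level $\sigma_j$ the $S^1$-equivariant cohomology, as a $\Z[u]$-module, has ``$u$-height'' at most $d_1+\dots+d_{j-1}$, i.e.\ no class of degree $2(d_1+\dots+d_{j-1})$ is a nonzero multiple of $u$; this follows by inductively analyzing the Gysin/long exact sequences of the filtration by sublevel sets, each step of which attaches a piece whose equivariant cohomology is that of $\C\PP^{d_\ell-1}$ and hence contributes at most $d_\ell$ to the relevant module length. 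Chaining these, $c_i(Y)$ jumps from $\sigma_{j-1}$ to $\sigma_j$ exactly when $i$ crosses $d_1+\dots+d_{j-1}$, which is the assertion.

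The main obstacle I anticipate is the Morse--Bott analysis at each critical level in the non-generic Besse situation: one must show that the Ekeland--Hofer functional is clean along $C_j$ (the kernel of the Hessian is exactly the tangent to the critical manifold, after accounting for the $S^1$ and iteration symmetries), identify the negative bundle over $C_j$ and its $S^1$-equivariant Euler/Thom class, and verify that the abstract handle-attachment and excision arguments go through in the infinite-dimensional setting with the correct index bookkeeping for iterates. Once this local picture is pinned down, the global counting is a formal consequence of Theorem~\ref{t:main} together with standard spectral-sequence/Gysin manipulations, and the comparison with the ellipsoid computation recalled above is immediate since the combinatorial input $(\sigma_j,d_j)$ is literally the same.
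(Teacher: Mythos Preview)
Your approach is essentially the paper's: Morse--Bott analysis of the Clarke functional, with the $S^1$-equivariant cohomology of each critical manifold $K_j$ read off from Theorem~\ref{t:main}. The lower bound via a rank count (if $e^i\neq 0$ in $H^*_{S^1}(\Lambda_{j-1};\Q)$ then $1,e,\dots,e^i$ are independent, while the Morse inequality bounds the total rank by $d_1+\dots+d_{j-1}$) is correct. Two points need to be sharpened, however.

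First, work over $\Q$, not $\Z$. The $S^1$-action on $K_j$ is only locally free, so $K_j/S^1$ is an orbifold and $H^*_{S^1}(K_j;\Z)$ need not be torsion-free; your claim that it equals the module $\Z[u]/(u^{d_j})$ is not justified. Over $\Q$ the identification $H^*_{S^1}(K_j;\Q)\cong\Q[e]/(e^{d_j})$ does follow from Theorem~\ref{t:main} via the Gysin sequence. Relatedly, the orientability of the negative bundles is not a routine ``obstacle'' but a substantial result in its own right (Proposition~\ref{p:orientability}); without it the Thom isomorphism involves twisted coefficients and the even-degree concentration, on which both of your estimates rest, disappears.

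Second, your upper bound is where the real gap lies. Producing nonzero classes in $H^*_{S^1}(\Lambda_j)$ ``surviving from the attached piece'' does not show that the \emph{specific} class $e^i$ is nonzero there, which is what the definition of $c_i$ requires. The paper closes this by first proving perfectness (Theorem~\ref{t:perfect}): since $\ind(K_j)$ is even and $H^{\odd}_{S^1}(K_j;\Q)=0$, the local cohomology $H^*_{S^1}(\Lambda_j,\Lambda_{j-1};\Q)$ is concentrated in even degrees, the lacunary principle splits all the long exact sequences, and then $H^*_{S^1}(\Lambda;\Q)=\Q[e]$ forces each $H^{2d}_{S^1}(\Lambda_j,\Lambda_{j-1};\Q)$ to have rank at most one, with equality for exactly one $j$. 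This simultaneously yields both bounds and the index relation $\ind(K_{j+1})=\ind(K_j)+2d_j$; your ``two-sided estimate'' should be rewritten as a consequence of perfectness rather than as two separate handle arguments.
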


We point out that in dimension 3, there are essentially no Besse contact spheres (convex or not) beside the rational ellipsoids. This is a consequence of existing results in the literature, which we summarize in the following statement. We recall that two contact manifolds $(Y_1,\alpha_1)$ and $(Y_2,\alpha_2)$ are strictly contactomorphic when there exists a diffeomorphism $\phi:Y_1\to Y_2$ such that $\phi^*\alpha_2=\alpha_1$. 

\begin{thm}[\cite{Geiges:2018aa, Cristofaro-Gardiner:2020aa}]
\label{t:ellipsoid}
Every Besse contact 3-sphere $(S^3,\alpha)$ is strictly contactomorphic to an ellipsoid. 
\end{thm}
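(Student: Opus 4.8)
The plan is to recover the statement by combining the classification of Seifert‑fibered contact forms of \cite{Geiges:2018aa} with the dynamical input of \cite{Cristofaro-Gardiner:2020aa}, so I describe how the pieces should assemble. First I would invoke Wadsley's theorem \cite{Wadsley:1975sp}: on a Besse contact sphere $(S^3,\alpha)$ the Reeb flow $\psi^t$ is periodic, $\psi^T=\id$ for a minimal $T>0$. Since the Reeb vector field $R$ never vanishes, $\psi^t$ has no fixed points and only finite stabilizers, hence defines an effective, locally free circle action on $S^3$ preserving $\alpha$ (as $\mathcal{L}_R\alpha=0$). Let $\pi\colon S^3\to B:=S^3/S^1$ be the orbit projection onto the closed $2$-orbifold $B$. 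Because $\iota_R\diff\alpha=0$ and $\diff\alpha$ is $S^1$-invariant, $\diff\alpha$ descends: $\diff\alpha=\pi^*\omega$ for a $2$-form $\omega$ on $B$, which is nowhere zero by the contact condition $\alpha\wedge\diff\alpha\ne0$. Thus $(S^3,\alpha)$ is a Boothby--Wang (prequantization) contact manifold over the symplectic orbifold $(B,\omega)$, in the sense of \cite{Boothby:1958ss,Thomas:1976aa}.

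Next I would identify $B$. Since $\pi_1(S^3)=1$, the classification of Seifert fibrations of $S^3$ forces $B$ to be a spindle orbifold $S^2(p,q)$ --- a $2$-sphere with at most two cone points, of coprime orders $p$ and $q$ --- with $p=q=1$ the Hopf fibration and $\{p,q\}=\{1,q\}$ a teardrop base. A by-product is that $\ker\alpha$, being the horizontal distribution of this $S^1$-connection, is isotopic to the standard tight contact structure on $S^3$; this disposes of the a priori possibility that a Besse form on $S^3$ be overtwisted.

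I would then establish the Boothby--Wang rigidity: two contact forms $\alpha_0,\alpha_1$ on $S^3$ with the same periodic Reeb flow and the same curvature $\omega$ are \emph{strictly} contactomorphic. Indeed $\alpha_1-\alpha_0$ is horizontal and $S^1$-invariant, so $\alpha_1-\alpha_0=\pi^*\gamma$ with $\gamma$ closed on $B$; as $H^1_{\mathrm{orb}}(S^2(p,q))=0$, $\gamma=\diff f$. The path $\alpha_t=\alpha_0+t\,\pi^*\diff f$ consists of contact forms all with Reeb field $R$, because $\alpha_t(R)\equiv1$, $\diff\alpha_t=\pi^*\omega$, and $\alpha_t\wedge\diff\alpha_t=\alpha_0\wedge\diff\alpha_0$ (the correction $t\,\pi^*(\diff f\wedge\omega)$ being the pullback of a $3$-form on the surface $B$, hence zero). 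A Moser deformation along $\alpha_t$, integrating the $X_t\in\ker\alpha_t$ with $\iota_{X_t}\diff\alpha_t=-\pi^*\diff f$, produces $\phi$ with $\phi^*\alpha_1=\alpha_0$. Combined with the orbifold Moser theorem for area forms on the surface $B$, this shows that the strict contactomorphism class of a Boothby--Wang $S^3$ over a spindle depends only on the pair $\{p,q\}$ and the common period $T$.

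Finally I would match these invariants with the rational ellipsoids. For coprime integers $p\le q$ and any $c>0$, the ellipsoid $E(cp,cq)$ is a Besse convex contact sphere: the Reeb flow $(z_1,z_2)\mapsto(e^{2\pi i t/(cp)}z_1,e^{2\pi i t/(cq)}z_2)$ is periodic with common period $cpq$, and realizes the Seifert fibration $S^3\to S^2(p,q)$ (the circles $\{z_2=0\}$ and $\{z_1=0\}$ being the two exceptional fibers). Hence $\{E(cp,cq)\}_{c>0}$ realizes every admissible pair $(\{p,q\},T)$, and choosing $c$ and $\{p,q\}$ appropriately the rigidity above yields a strict contactomorphism $(S^3,\alpha)\cong E(cp,cq)$. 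I expect the main obstacle to be precisely this orbifold Boothby--Wang rigidity \emph{up to strict rather than merely smooth contactomorphism}, together with the verification that no nonstandard contact structure occurs; this is the content to be extracted from \cite{Geiges:2018aa,Cristofaro-Gardiner:2020aa}, established there through a careful analysis of the Seifert invariants and of the Reeb dynamics.
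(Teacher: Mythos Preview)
Your approach is correct and reaches the conclusion by a more hands-on route than the paper's. The paper's argument is essentially a two-step citation: from \cite[Lemma~4.1]{Geiges:2018aa} it extracts that the simple action spectrum $\sigma_{\mathrm p}(S^3,\alpha)$ contains at most three elements, the largest being the common period $\tau$; it then lets $a_1\le a_2$ be the two smallest (setting $a_1=a_2=\tau$ if there is only one), observes that the ellipsoid $E(a_1,a_2)$ has the same simple action spectrum, and invokes \cite[Theorem~1.5]{Cristofaro-Gardiner:2020aa} as a black box (two diffeomorphic Besse contact 3-manifolds are strictly contactomorphic if and only if they have the same prime action spectrum). You instead unpack the mechanism behind that black box: you identify the quotient as a spindle $S^2(p,q)$, establish Boothby--Wang rigidity via an explicit Moser isotopy, and match the Seifert data $(\{p,q\},T)$ with that of $E(cp,cq)$. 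Your invariants and the paper's encode the same information, since the simple action spectrum is $\{\tau/q,\tau/p,\tau\}$ with $\tau=T$. What your route buys is transparency about \emph{why} the rigidity holds; what the paper's buys is brevity, since the orbifold Moser step and the lifting of Seifert equivalences to $S^1$-equivariant diffeomorphisms of $S^3$---precisely the obstacle you correctly flag at the end---are exactly what is packaged inside the cited theorem.
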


\begin{proof}
Let $\tau>0$ be the minimal common Reeb period of the Besse $(S^3,\alpha)$. We denote by $\sigma_{\mathrm p}(S^3,\alpha)$ the simple action spectrum, which is the set of those $a>0$ that are minimal periods of some Reeb orbit of $(S^3,\alpha)$. The Reeb flow $\psi^t$ defines an $S^1=\R/\tau\Z$ action on $S^3$, and the quotient projection $S^3\to S^3/S^1$ is a Seifert fibration whose base is oriented by the 2-form $\pi_*\diff\alpha$ (this push-forward is well defined since $\psi^t$ preserves $\alpha$). The classification of Seifert fibrations \cite[Lemma~4.1]{Geiges:2018aa} implies that there are at most two distinct periodic Reeb orbits whose minimal period is strictly less than $\tau$. Therefore, $\sigma_{\mathrm p}(S^3,\alpha)$ contains at most three elements, one of which is $\tau$. If $\sigma_{\mathrm p}(S^3,\alpha)$ contains at least two elements, we call $a_1< a_2$ the two smallest elements of $\sigma_{\mathrm p}(S^3,\alpha)$; if instead $\sigma_{\mathrm p}(S^3,\alpha)=\{\tau\}$, we set $a_1=a_2=\tau$. In both cases, we conclude that $\sigma_{\mathrm p}(S^3,\alpha)$ is also the simple action spectrum of the Besse ellipsoid $E(a_1,a_2)$. Finally, by \cite[Theorem~1.5]{Cristofaro-Gardiner:2020aa}, two diffeomorphic Besse contact 3-manifolds are strictly contactomorphic if and only if they have the same prime action spectrum.
\end{proof}

In higher dimensions, Theorem~\ref{t:ellipsoid} does not hold: Ustilovsky \cite{Ustilovsky:1999vr} proved that, for each $n\geq3$ odd, there exist infinitely many Besse contact spheres $(S^{2n-1},\alpha_i)$, with $i\in\N$, that are pairwise not contactomorphic, meaning that for each pair of distinct positive integers $i_1\neq i_2$ there is no diffeomorphism $\phi:S^{2n-1}\to S^{2n-1}$ satisfying $d\phi(\ker\alpha_{i_1})=\ker\alpha_{i_2}$. Since all ellipsoids of dimension 
$2n-1$ are contactomorphic to one another, infinitely many Ustilovsky spheres $(S^{2n-1},\alpha_i)$ are not contactomorphic to ellipsoids, let alone strictly contactomorphic. 

A closed contact manifold all of whose Reeb orbits are closed and have the same minimal period is called \textbf{Zoll}. We remark that Ustilovsky's contact spheres are not Zoll. It is not known whether there exist two non strictly contactomorphic Zoll contact spheres of some dimension larger than or equal to~5. 
A celebrated related problem is the uniqueness (up to diffeomorphism) of the  symplectic form with unit volume on complex projective spaces, which is known for $\CP^2$ by a result of Taubes \cite{Taubes:1995aa}, but open in higher dimension. 

The proofs of the main results in this paper (and the very definition of the Ekeland-Hofer spectral invariants) are based on the Clarke action functional $\Psi$, which provides a variational principle for the periodic Reeb orbits of convex contact spheres. It is well known that, for Besse convex contact spheres, $\Psi$ is Morse-Bott non-degenerate. The technical core for the proof of Theorems~\ref{t:main} and  \ref{t:spec} is the following.

\begin{maintheorem}\label{t:perfect}
The Clarke action functional of a Besse convex contact sphere is perfect for the rational $S^1$-equivariant cohomology.
\end{maintheorem}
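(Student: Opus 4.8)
The plan is to show that the Clarke action functional $\Psi$ on its (infinite-dimensional) domain, being Morse--Bott non-degenerate with critical manifolds consisting of circles of closed Reeb orbits together with their iterates, has no ``cancellation'' in the $S^1$-equivariant Morse--Bott spectral sequence over $\Q$. Concretely, I would first recall the structure of the critical set of $\Psi$: since $Y$ is Besse, the critical points of $\Psi$ at action level $\tau$ form a manifold $Z_\tau$ diffeomorphic to the mapping torus / orbit space of the free part of the Reeb action restricted to $\fix(\psi^\tau)$, and the $S^1$-equivariant homotopy type of the sublevel sets is built by successively attaching the negative bundles over these $Z_\tau$'s. The Morse--Bott inequalities give, for each $\tau\in\sigma(Y)$, a contribution to the equivariant Morse series governed by the Poincar\'e series of $Z_\tau$ (with $\Q$-coefficients, equivariantly) shifted by the Morse--Bott index $\iota_\tau$ of the critical manifold, and perfectness is the statement that the total equivariant Morse series equals the equivariant Poincar\'e series of the total space $S^{2n-1}$ (i.e.\ of a point, since $S^\infty/S^1=\C P^\infty$), namely $(1-t^2)^{-1}$.

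The key steps, in order, are: (i) identify the closed Reeb orbits at level $\tau$ and their iterates, and compute the Morse--Bott index $\iota_\tau$ and nullity of the corresponding critical manifold via the Ekeland index / Conley--Zehnder index of the linearized Reeb flow along short orbits; here the convexity hypothesis is what makes the Ekeland index behave correctly and makes $\Psi$ bounded below with good compactness (Palais--Smale). (ii) Observe that $\fix(\psi^\tau)$ is, by Theorem~A, an integral homology sphere, and that the $S^1$-action on it (the Reeb flow) is locally free, so its orbit space / Borel construction has the rational cohomology of $\C P^{d}$ where $2d+1=\dim\fix(\psi^\tau)$; thus the equivariant Poincar\'e series of the critical manifold $Z_\tau$ is $1+t^2+\dots+t^{2d}$. (iii) Match, level by level, the index shift $t^{\iota_\tau}(1+t^2+\dots+t^{2d_\tau})$ against the ``missing'' band of the model series $(1-t^2)^{-1}$, exactly as happens for ellipsoids; this is the combinatorial identity that also underlies the Ekeland--Hofer computation recalled in the introduction. (iv) Conclude that all Morse--Bott inequalities are equalities, i.e.\ the boundary operators in the equivariant spectral sequence vanish, which is the definition of $\Psi$ being perfect for $\Q$-coefficient $S^1$-equivariant cohomology.

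The main obstacle is step (iii) together with the index computation in step (i): one must show that the Morse--Bott indices $\iota_\tau$ are precisely such that the shifted Poincar\'e polynomials of consecutive strata tile $(1-t^2)^{-1}$ without overlap and without gaps. A priori Morse theory only gives inequalities; to upgrade to equalities one needs either (a) a direct argument that consecutive index bands are disjoint and exhaustive --- which amounts to a monotonicity/parity statement about the Ekeland index along the Reeb flow, using convexity --- or (b) a bootstrapping argument: first establish Theorem~A (so that $Z_\tau$ has the expected rational cohomology), then note that the alternating sum of the Morse series must equal the equivariant Euler characteristic series of $S^{2n-1}$, and that all the relevant cohomology groups are concentrated in even degrees (because each $Z_\tau$ has cohomology only in even degrees and the index shifts preserve this), so there can be no cancellation at all. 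Approach (b) is the cleaner one: \emph{even-degree concentration forces perfectness}, since a spectral sequence between groups concentrated in even total degree has no nonzero differentials. I would therefore carry out the argument by proving the parity statement --- that $\iota_\tau$ is always even and that the equivariant cohomology of each critical manifold is concentrated in even degrees --- and then invoking the standard fact that a Morse--Bott function whose equivariant (co)homology data live in even degrees is automatically perfect.
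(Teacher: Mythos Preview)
Your overall strategy --- reduce perfectness to the lacunary principle by showing that each critical manifold has $S^1$-equivariant rational cohomology concentrated in even degrees, and that all Morse indices are even --- is exactly the paper's strategy. The even-index statement is indeed available in advance (it is Lemma~\ref{l:GGM}). The real content of Theorem~\ref{t:perfect} is therefore the vanishing $H^{\odd}_{S^1}(K_j;\Q)=0$, and this is where your proposal has a genuine gap.

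You propose to obtain this vanishing from Theorem~\ref{t:main}: once each stratum $\fix(\psi^\tau)$ is known to be an integral homology sphere, its $S^1$-equivariant rational cohomology is that of a complex projective space, hence concentrated in even degrees. But in the paper the logical flow is the reverse. Theorem~\ref{t:main} is proved \emph{after} Theorem~\ref{t:perfect}, using Lemma~\ref{l:rational_homology_sphere}, whose proof relies on the short exact sequences coming from perfectness. So invoking Theorem~\ref{t:main} here is circular, and there is no independent route to it indicated in your outline. A priori, each critical manifold $K_j\cong\fix(\psi^{\sigma_j})$ is only known to be a closed contact submanifold of $Y$; it could in principle be disconnected or have nontrivial odd-degree rational cohomology, and nothing in steps (i)--(iii) rules this out.

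The paper's substitute for this missing step is Proposition~\ref{p:zero_odd_cohomology}, which establishes $H^{\odd}_{S^1}(K_j;\Q)=0$ directly by a rather delicate $p$-torsion argument (Proposition~\ref{p:torsion}, after Radeschi--Wilking): assuming some $P_k$ has nonzero odd equivariant rational cohomology, one chooses a large prime $p$, studies the $p$-torsion of $H^*_{S^1}(\Lambda_j;\Z)$ as $j$ increases past the iterate $P_k^p$, and eventually produces nontrivial torsion in an odd degree of $H^*_{S^1}(\Lambda;\Z)$, contradicting Lemma~\ref{l:domain_Clarke}. This argument also needs the orientability of the negative bundles (Proposition~\ref{p:orientability}) so that the Thom isomorphism of Lemma~\ref{l:local_cohomology} is available with $\Z$-coefficients; your outline does not address orientability either. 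In short, the ``even-degree concentration forces perfectness'' slogan is correct, but the concentration itself is the theorem, and it is not a consequence of Theorem~\ref{t:main}.
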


In this statement, perfect must be understood in the sense of Morse theory: for any $b>a>0$, the inclusions induce a short exact sequence
\begin{align*}
 0
 \to 
 H^*_{S^1}(\Psi^{-1}(0,b),\Psi^{-1}(0,a);\Q)
 \to 
 H^*_{S^1}(\Psi^{-1}(0,b);\Q)
 \to 
 H^*_{S^1}(\Psi^{-1}(0,a);\Q)
 \to 
 0.
\end{align*}
Namely, in the $S^1$-equivariant Morse theory of $\Psi$, there are no cancellations among the critical manifolds.

Theorem~\ref{t:perfect} will require two main ingredients, which turn out to always hold under the Besse assumption: the orientability of the negative normal bundle of any critical manifold (Proposition~\ref{p:orientability}), and the vanishing of the $S^1$-equivariant rational cohomology of any critical manifold in odd degree (Proposition~\ref{p:zero_odd_cohomology}). Both statements are inspired by analogous results for Besse geodesic flows due to the second author and Wilking \cite{Radeschi:2017dz}, which were employed to establish the high-dimensional Berger conjecture: every Besse geodesic flow on the unit cotangent bundle of a Riemannian $n$-sphere with $n>3$ is actually Zoll (the conjecture is still open for $n=3$, while it was earlier proved by Gromoll-Grove \cite{Gromoll:1981kl} for $n=2$).

Recent results in the literature \cite{Abbondandolo:2019aa, Irie:2019aa} seem to indicate that the Ekeland-Hofer spectral invariants of a convex contact sphere may coincide with its corresponding Ekeland-Hofer capacities \cite{Ekeland:1989aa, Ekeland:1990aa} (this is indeed known for the first one $c_0(Y)$, see \cite{Sikorav:1990aa}). Unlike the spectral invariants, the Ekeland-Hofer capacities can be associated to any compact subset of symplectic vector spaces; when associated to a restricted contact-type hypersurface $Y$ or to its compact filling, they take value inside its action spectrum $\sigma(Y)$. It is an open question whether Theorems~\ref{t:main} and~\ref{t:spec} (with the capacities instead of the spectral invariants) continue to hold for general restricted contact-type spheres in symplectic vector spaces.

\subsection{Organization of the paper}
In Section~\ref{s:Clarke}, we provide the background concerning the Clarke action functional. Section~\ref{s:orientability} is devoted to the proof of the orientability of the negative normal bundles of the critical manifolds of the Clarke action functional in the Besse case. In Section~\ref{s:perfectness}, we prove that the Clarke action functional of a Besse convex contact sphere is perfect for the $S^1$ equivariant rational cohomology, and use this fact to establish Theorem~\ref{t:perfect}, and then Theorems~\ref{t:main} and~\ref{t:spec}. Finally, in Appendix~\ref{a:Euler} we compute the Euler class of a suitable fibered sum of circle bundles, which is employed in Section~\ref{s:perfectness}.

\subsection*{Acknowledgments} The first author is grateful to Alberto Abbondandolo, Viktor Ginzburg, Ba\c{s}ak G\"urel, Christian Lange, and Jean-Claude Sikorav for several fruitful discussions, and to Jean Gutt for pointing out Ustilovsky's work  on the existence of exotic Besse contact spheres. The second author would like to thank Lee Kennard for discussions on groups actions on spheres. Both authors thank the anonymous referee for carefully reading the manuscript and for providing insightful comments.

\section{The Clarke action functional}\label{s:Clarke}

\subsection{The functional setting}

A convenient variational principle for the study of periodic Reeb orbits in a convex contact sphere is given by the Clarke action functional \cite{Clarke:1979aa}. Such functional appears in the literature under different, although equivalent, formulations; here we present the one in the $L^2$ setting, following Ekeland-Hofer \cite{Ekeland:1987aa}. 

Let $Y\subset\R^{2n}$ be a convex contact sphere. We embed its Reeb flow $\psi^t:Y\to Y$ into a Hamiltonian flow $\phi_H^t:\R^{2n}\to\R^{2n}$ so that $\phi_H^t(\lambda z)=\lambda\psi^t(z)$ for all $t\in\R$, $\lambda>0$, and $z\in Y$. We recall that $\phi_H^t$ is the flow of a Hamiltonian vector field $X_H$ on $\R^{2n}$ defined by $\omega(X_H,\cdot)=-\diff H$, where $H:\R^{2n}\to\R$ is the associated Hamiltonian, and $\omega=\diff\lambda$ is the standard symplectic 2-form of $\R^{2n}$. We denote by $J$ the standard complex structure of $\R^{2n}$, which is defined by $\langle J\cdot,\cdot\rangle=\omega$, and we remark that  $X_H=J\nabla H$. The Hamiltonian whose flow extends the Reeb flow as above is the unique function such that $H|_Y\equiv 1$ and $H(\lambda z)=\lambda^2$ for all $\lambda>0$ and $z\in Y$. 
In particular, $H$ is a convex 2-homogeneous function smooth outside the origin, and so is its dual
\begin{align*}
 H^*:\R^{2n}\to[0,\infty),\qquad H^*(w)=\max_{z\in\R^{2n}} \Big( \langle w,z\rangle - H(z)\Big).
\end{align*}

We consider the space of $L^2$ curves with zero average  
\[L^2_0(S^1,\R^{2n}) =\left\{ \zeta\in L^2(S^1,\R^{2n})\ \left|\ \int_{S^1} \zeta(t)\,\diff t=0 \right.\right\}, \]
where $S^1=\R/\Z$.
Notice that every element in $L^2_0(S^1,\R^{2n})$ is the first derivative $\dot\gamma$ of some element $\gamma\in W^{1,2}(S^1,\R^{2n})$.  
We consider the symplectic action functional
\begin{align*}
\A:L^{2}_0(S^1,\R^{2n})\to\R,\qquad
\A(\dot\gamma) = \int_{S^1} \gamma^*\lambda = \frac12 \int_{S^1} \langle J\gamma(t),\dot\gamma(t) \rangle\,\diff t.
\end{align*}
The expression of $\A$ involves a primitive $\gamma$, but the value $\A(\dot\gamma)$ is independent of its choice.

We next consider the functional
\begin{align*}
\G:L^2_0(S^1,\R^{2n})\to[0,\infty),
\qquad
\G(\dot\gamma) = \int_{S^1} H^*(-J\dot\gamma(t))\,\diff t.
\end{align*}
Notice that $\G(0)=0$, $\G$ is positive away from the origin, and $\G(\lambda\dot\gamma)=\lambda^2\G(\dot\gamma)$ for all $\lambda>0$ and $\dot\gamma\in L^2_0(S^1,\R^{2n})$. We set
\begin{align*}
\Lambda:=\G^{-1}(1)\cap\A^{-1}(0,\infty). 
\end{align*}
The \textbf{Clarke action functional} is defined by
\begin{align*}
 \Psi: \Lambda\to(0,\infty),\qquad \Psi(\dot\gamma)=\frac1{\A(\dot\gamma)}.
\end{align*}
The associated variational principle allows to characterize the closed Reeb orbits of $Y$ as follows.

\begin{thm}[Clarke variational principle]\label{t:Clarke}
The critical points \[\dot\gamma\in \crit(\Psi)\cap\Psi^{-1}(c)\] are precisely those curves admitting a (unique) primitive $\gamma\in C^\infty(S^1,\R^{2n})$ such that
$\dot\gamma=c X_H(\gamma)=cJ\nabla H(\gamma)$ and $H(\gamma)\equiv c^{-2}>0$.
Namely, the curve
$t\mapsto c\,\gamma(t/c)$ 
is a (possibly iterated) $c$-periodic Reeb orbit of $Y$.
\hfill\qed
\end{thm}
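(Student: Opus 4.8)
The plan is to set up the Lagrange multiplier problem for the constrained functional $\Psi = 1/\A$ on $\Lambda = \G^{-1}(1)\cap\A^{-1}(0,\infty)$ and then identify the resulting Euler--Lagrange equation with the Reeb equation after rescaling. First I would record the variational derivatives: since $\A$ is the restriction of the quadratic form $\tfrac12\langle J\zeta,\cdot\rangle_{L^2}$ (integrated against a primitive), its gradient at $\zeta=\dot\gamma$ with respect to the $L^2_0$ inner product is essentially $-J\gamma$ projected onto mean-zero curves, i.e. $\nabla\A(\dot\gamma)=\Pi(-J\gamma)$ where $\Pi$ is the $L^2$-orthogonal projection onto $L^2_0$. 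For $\G$, using that $H^*$ is smooth off the origin and $2$-homogeneous, one gets $\nabla\G(\dot\gamma)=\Pi\big(J\,\nabla H^*(-J\dot\gamma)\big)$ (the inner $J$ coming from the argument $-J\dot\gamma$ and an outer $J$ from differentiating through it); the key point is that $\nabla\G$ is the derivative of a pointwise Legendre transform. A critical point of $\Psi|_\Lambda$ at level $c$ is then, by the Lagrange multiplier rule, a solution of
\begin{align*}
 -\frac{1}{\A(\dot\gamma)^2}\,\nabla\A(\dot\gamma) = \mu\,\nabla\G(\dot\gamma)
\end{align*}
for some $\mu\in\R$, together with $\G(\dot\gamma)=1$ and $\A(\dot\gamma)=1/c$.

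The heart of the argument is then to unwind this. Pairing the multiplier equation with $\dot\gamma$ itself and using Euler's identity for the $2$-homogeneous functions $\A$ (quadratic) and $\G$ — namely $\langle\nabla\A(\dot\gamma),\dot\gamma\rangle=2\A(\dot\gamma)$ and $\langle\nabla\G(\dot\gamma),\dot\gamma\rangle=2\G(\dot\gamma)=2$ — pins down $\mu$ in terms of $c$, giving $\mu=-c^{2}\cdot(\text{const})$; I would track the constant carefully so that the final normalization $H(\gamma)\equiv c^{-2}$ comes out correctly. Stripping the projections $\Pi$ (which only add a constant, absorbed into the choice of primitive $\gamma$) turns the equation into the pointwise identity $-J\gamma = \mu\,c^{-2}\cdot(\text{something})\cdot J\nabla H^*(-J\dot\gamma)$, i.e. after applying $J$ and rearranging, $\nabla H^*(-J\dot\gamma)$ is a constant positive multiple of $\gamma$. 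Now I invoke the Legendre duality between $H$ and $H^*$: for $2$-homogeneous convex $H$ smooth off $0$, the relation $w=\nabla H(z)$ is equivalent to $z=\nabla H^*(w)$, and $H(z)=H^*(w)$ on corresponding points. Applying this with $w=-J\dot\gamma$ converts $\nabla H^*(-J\dot\gamma)=(\text{const})\gamma$ into $-J\dot\gamma=(\text{const})'\nabla H(\gamma)$, i.e. $\dot\gamma = (\text{const})'' J\nabla H(\gamma)=(\text{const})'' X_H(\gamma)$; matching constants via the action value forces the coefficient to be exactly $c$, yielding $\dot\gamma=cX_H(\gamma)$ and simultaneously $H(\gamma)\equiv c^{-2}$.

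Finally I would note smoothness and the reparametrization. Once $\dot\gamma = cJ\nabla H(\gamma)$ is known, elliptic bootstrapping (or simply ODE regularity, since $\gamma$ stays away from the origin because $H(\gamma)$ is a positive constant and $H$ is smooth there) upgrades $\gamma$ from $W^{1,2}$ to $C^\infty$; uniqueness of the primitive follows from the mean-zero normalization of $\dot\gamma\in L^2_0$ together with the constraint that $\gamma$ solve the first-order ODE (the solution is determined by $\dot\gamma$ up to a constant, and the constant is fixed by the ODE itself since $\nabla H$ is homogeneous of degree one and non-degenerate). Setting $\Gamma(s)=c\,\gamma(s/c)$ and using $2$-homogeneity of $H$ (so $\nabla H(c\gamma)=c\nabla H(\gamma)$), one checks $\dot\Gamma(s)=\dot\gamma(s/c)=cJ\nabla H(\gamma(s/c))=J\nabla H(\Gamma(s))=X_H(\Gamma(s))$ with $H(\Gamma)\equiv c^{2}\cdot c^{-2}=1$, so $\Gamma$ lies on $Y=H^{-1}(1)$ and is an integral curve of $X_H$, hence a $c$-periodic (possibly iterated) Reeb orbit. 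The main obstacle I anticipate is purely bookkeeping: keeping all the multiplicative constants from the two $J$'s, the Legendre transform, and the two homogeneity degrees consistent so that the clean statements $\dot\gamma=cX_H(\gamma)$ and $H(\gamma)\equiv c^{-2}$ emerge exactly, rather than up to an unexplained scalar.
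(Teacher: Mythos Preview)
The paper does not actually prove Theorem~\ref{t:Clarke}: the statement is marked with a~$\qed$ and treated as a known result from the literature (this is the classical Clarke dual variational principle, and the reader is implicitly referred to Clarke's original paper and to Ekeland's monograph for the details). So there is no ``paper's own proof'' to compare against.

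That said, your outline is the standard argument and is essentially correct. A couple of small remarks on the bookkeeping you flagged. First, with the paper's conventions one has $\diff\A(\dot\gamma)\dot\eta=\int\langle J\gamma,\dot\eta\rangle$, so $\nabla\A(\dot\gamma)=\Pi(J\gamma)$ rather than $\Pi(-J\gamma)$; this sign propagates but is harmless. Second, after you obtain $c\gamma=\nabla H^*(-J\dot\gamma)$ and invert via Legendre duality to get $\dot\gamma=cJ\nabla H(\gamma)$, the cleanest way to nail down $H(\gamma)\equiv c^{-2}$ is exactly as you suggest: observe that $H(\gamma)$ is conserved along the ODE, and then use the identity $H^*(\nabla H(z))=H(z)$ for $2$-homogeneous $H$ to get $H^*(-J\dot\gamma)=H(c\gamma)=c^2H(\gamma)$, whence $\G(\dot\gamma)=1$ forces $H(\gamma)=c^{-2}$. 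Finally, the uniqueness of the primitive $\gamma$ is precisely the freedom you used to strip the projection $\Pi$: among all primitives there is exactly one for which the constant disappears and the pointwise identity $c\gamma=\nabla H^*(-J\dot\gamma)$ holds. With these points tracked, your plan goes through without further obstacles.
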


\subsection{Spectral invariants}\label{s:spectral_invariants}
The Clarke action functional $\Psi$ satisfies the Palais-Smale condition (see \cite{Ekeland:1987aa}).
Moreover, $\Psi$ has a remarkable feature that is  lacking in most of the other symplectic functionals: it is uniformly bounded from below. This allows to detect the closed Reeb orbits on $Y$ corresponding to the global minimizers of $\Psi$; the period of such Reeb orbits, sometimes called the systole of $Y$, is thus
\begin{align}
\label{e:systole}
 c_0(Y):=\min\Psi\in\sigma(Y),
\end{align}
where $\sigma(Y)$ is the action spectrum~\eqref{e:action_spectrum}

Consider the classifying space $BS^1$. Given a topological space $X$ equipped with a continuous $S^1$-action and a coefficient ring $R$, the $S^1$-equivariant cohomology 
\[H^*_{S^1}(X;R):=H^*(X\times_{S^1}ES^1;R)\] 
is a $H^*(BS^1;R)$-module with scalar multiplication 
\begin{align*}
f\cdot k:= (\pi^*f)\smallsmile k,\qquad \forall f\in H^*(BS^1;R),\ k\in H^*_{S^1}(X;R).
\end{align*}
Here, $\pi:X\times_{S^1}ES^1\to BS^1$ is the quotient-projection.
The Euler class of the universal principal $S^1$-bundle $ES^1\to BS^1$ induces a generator 
\[e\in H^2(BS^1;R).\] 
With a common abuse of notation, we still denote by $e$ the element $e\cdot 1\in H^2_{S^1}(X;R)$, which is the cohomology class induced by the Euler class of the principal $S^1$-bundle $X\times ES^1\to X\times_{S^1}ES^1$.

The Hilbert space $L^2_0(S^1,\R^{2n})$ is equipped with the $S^1$-action given by the time-translation 
\begin{align*}
t\cdot\dot\gamma=\dot\gamma(t+\cdot)\in\Lambda,\qquad\forall t\in S^1,\ \dot\gamma\in\Lambda.
\end{align*}
Notice that $\A$ and $\G$ are $S^1$-invariant, and so is the Clarke action functional $\Psi$. We denote its open sublevel sets by
\begin{align*}
 \Lambda^{<a}:=\Psi^{-1}(0,a),\qquad a\in(0,\infty].
\end{align*}
For each integer $i\geq0$, the $i$-th \textbf{Ekeland-Hofer spectral invariant} is defined by
\begin{align*}
c_i(Y) := \inf \big\{ c>0\ \big|\ e^i\neq0\mbox{ in }H^*_{S^1}(\Lambda^{<c};\Q) \big\}\in\sigma(Y).
\end{align*}
In the right-hand side, the non-vanishing of the cohomology class $e^i$ is often expressed in the literature by saying that $\Lambda^{<c}$ has Fadell-Rabinowitz index \cite{Fadell:1978aa} larger than or equal to $i$. The notation $c_i(Y)$ is consistent with the above one in Equation \eqref{e:systole}: $c_0(Y)$ is the systole of $Y$.

\subsection{The Morse index}\label{s:Morse_index}

Since the Hamiltonian $H$ and its dual $H^*$ are not $C^2$ at the origin, the Clarke action functional $\Psi$ is not $C^2$ neither. Nevertheless, it is $C^{1,1}$, and the finite dimensional reduction provided in \cite{Ekeland:1987aa} allows to treat it as a $C^2$ functional in the applications. 

Let us consider a critical point $\dot\gamma\in\crit(\Psi)\cap\Psi^{-1}(c)$. We set 
\[
\Phi:= \A|_{\Lambda}=\frac{1}{\Psi} .
\]
The Hessian bilinear form of the Clarke action functional at $\dot\gamma$ is given by
\begin{align*}
 \langle \nabla^2 \Psi(\dot\gamma)\dot\eta,\dot\zeta \rangle_{L^2}
 =
 -\frac{1}{\Phi(\dot\gamma)^2} \langle \nabla^2 \Phi(\dot\gamma)\dot\eta,\dot\zeta\rangle
 =
 -\frac{1}{\Phi(\dot\gamma)^2} 
 \int_{S^1} \langle J\eta,\dot\zeta\rangle,
\end{align*}
where $\dot\eta,\dot\zeta\in\Tan_{\dot\gamma}\Lambda$. The Hessian $\nabla^2 \Psi(\dot\gamma)$ is a self-adjoint bounded operator on $\Tan_{\dot\gamma}\Lambda$. By Theorem~\ref{t:Clarke}, $\dot\gamma$ admits a primitive $\gamma$ satisfying Hamilton's equation $\dot\gamma=cJ\nabla H(\gamma)$, and dually $c\,\gamma=\nabla H^*(-J\dot\gamma)$. Notice that $\Tan_{\dot\gamma}\Lambda=\Tan_{\dot\gamma}\G^{-1}(1)$. Since
\begin{align*}
\diff\G(\dot\gamma)\dot\eta
=\int_{S^1} \langle\nabla H^*(-J\dot\gamma),-J\dot\eta\rangle\,\diff t
=c\int_{S^1} \langle J\gamma,\dot\eta\rangle\,\diff t,\qquad\forall\dot\eta\in L^2_0(S^1,\R^{2n}),
\end{align*}
the tangent space $\Tan_{\dot\gamma}\Lambda$ is precisely the space of those $\dot\eta\in L^2_0(S^1,\R^{2n})$ such that
\begin{align*}
 \int_{S^1} \langle J\gamma,\dot\eta\rangle\,\diff t = 0.
\end{align*}

The \textbf{Morse index} $\ind(\Psi,\dot\gamma)$ is defined as the supremum of the dimension of those vector subspaces of $\Tan_{\dot\gamma}\Lambda$ over which $\nabla^2\Psi(\dot\gamma)$ is negative definite. The \textbf{nullity} $\nul(\Psi,\dot\gamma)$ is defined as the dimension of $\ker\nabla^2\Psi(\dot\gamma)$. Notice that $\ind(\Psi,\dot\gamma)=\ind(-\Phi,\dot\gamma)$ and  $\nul(\Psi,\dot\gamma)=\nul(-\Phi,\dot\gamma)=\nul(\Phi,\dot\gamma)$.

To compute the index of a critical point of $\Psi$, consider the functional
\begin{align*}
\Theta:L^2_0(S^1;\R^{2n})\to\R,
\qquad
\Theta(\dot\zeta)
=
 - \A(\dot\zeta) + c^{-1} \G(\dot\zeta).
\end{align*}
The critical points of $\Theta$ are precisely those $\dot\zeta$ admitting a primitive $\zeta$ such that 
\begin{align*}
 \dot\zeta = c J\nabla H(\zeta).
\end{align*}
Every critical point $\dot\zeta\in\crit(\Theta)$ belongs to a cylinder of critical points 
\[
(\R_+\times S^1)\cdot\dot\zeta
=
\big\{\rho\,\dot\zeta(t+\cdot)\ \big|\ \rho>0,\ t\in S^1\big\},
\]
and the only critical value of $\Theta$ is 0, because $\Theta$ is positively homogeneous of degree~2. Since
$\Theta|_{\Lambda} = c^{-1}-\Phi$,
we infer that our $\dot\gamma\in\crit(\Psi)\cap \Psi^{-1}(c)$ is also a critical point of $\Theta$. The Hessian of $\Theta$ at $\dot\gamma$ is given by
\begin{align*}
 \langle\nabla^2\Theta(\dot\gamma)\dot\eta,\dot\zeta\rangle
 & =
 - \int_{S^1} \langle J\eta,\dot\zeta\rangle + c^{-1}\int_{S^1} \langle\nabla^2H^*(-J\dot\gamma)J\dot\eta,J\dot\zeta\rangle\,\diff t\\
 & =
  \int_{S^1} \langle \eta + c^{-1}\nabla^2H(\gamma)^{-1}J\dot\eta,J\dot\zeta\rangle\,\diff t.
\end{align*}
This expression readily implies that $\dot\eta\in\ker(\nabla^2\Theta(\dot\gamma))$ if and only if $\dot\eta$ admits a primitive $\eta$ such that
$\dot\eta = c J\nabla^2 H(\gamma)\eta$.
Namely, 
\[\eta(t)=\diff\phi_H^{ct}(\gamma(0))\eta(0),\qquad\forall t\in [0,1].\] 
We recall that $\eta$ is a smooth 1-periodic curve, being the primitive of the zero-average curve $\dot\eta$. Therefore, the nullity of $\Theta$ at $\dot\gamma$ is given by
\begin{align*}
\nul(\Theta,\dot\gamma)=\dim\ker(\diff\phi_H^c(\gamma(0))-I).
\end{align*}
The computation of the Morse index is slightly more involved, but according to a theorem of Ekeland \cite[Theorem~I.4.6]{Ekeland:1990lc} turns out to be analogous to the one for geodesic arcs in Riemannian manifolds: it is given by a count of conjugate points. The formula is the following\footnote{Notice that the formula~\ref{e:Morse} does not directly correspond to the Morse index formula for closed geodesics in Riemannian manifolds. The index of a closed geodesic is indeed given by a count of conjugate points plus the so-called concavity.}
\begin{align}
\label{e:Morse}
 \ind(\Theta,\dot\gamma) & =\sum_{t\in(0,1)} \dim\ker(\diff\phi_H^{ct}(\gamma(0))-I). 
\end{align}
Since $\Tan_{\dot\gamma}\Lambda=\Tan_{\dot\gamma}\G^{-1}(1)$ and $\diff \G(\dot\gamma)\dot\gamma=2\G(\dot\gamma)=2$, the space $L^2(S^1,\R^{2n})$ splits as a direct sum
\begin{align*}
 L^2_0(S^1,\R^{2n})
 =
 T_{\dot\gamma}\Lambda
 \oplus
 \mathrm{span}\{\dot\gamma\},
\end{align*}
Notice that 
\[
\nabla^2\Theta|_{T_{\dot\gamma}\Lambda} = -\nabla^2\Phi.
\]
Since $\Theta$ is positively 2-homogeneous, we have 
\[\nabla^2\Theta(\dot\gamma)\dot\gamma=\nabla\Theta(\dot\gamma)=0.\]
The last two equations readily imply that
\begin{align*}
 \ind(\Psi,\dot\gamma) & = \ind(-\Phi,\dot\gamma) = \ind(\Theta,\dot\gamma) ,\\
 \nul(\Psi,\dot\gamma) & = \nul(-\Phi,\dot\gamma) = \nul(\Theta,\dot\gamma)-1 . 
\end{align*}
Finally, since $\dot\gamma$ belongs to a circle of critical points $S^1\cdot\dot\gamma$ of $\Psi$, we have
\begin{align*}
 \nabla^2\Psi(\dot\gamma)\ddot\gamma = \tfrac{\diff}{\diff t}\big|_{t=0}\nabla\Psi(t\cdot\dot\gamma)=0,
\end{align*}
and therefore the nullity $\nul(\Psi,\dot\gamma)$ is always at least 1.
From now on, we will simply write 
\[
\ind(\dot\gamma)=\ind(\Psi,\dot\gamma),
\qquad
\nul(\dot\gamma)=\nul(\Psi,\dot\gamma).
\]
Summing up, we have the following statement.

\begin{lem}
\label{l:Morse}
The Morse indices of a critical point $\dot\gamma\in\crit(\Psi)\cap\Psi^{-1}(c)$ are given by
\begin{align*}
 \ind(\dot\gamma) & =\sum_{t\in(0,c)} \dim\ker(\diff\phi_H^{t}(\gamma(0))-I),\\
 \nul(\dot\gamma) & =\dim\ker(\diff\phi_H^{c}(\gamma(0))-I)-1. \tag*{\qed}
\end{align*}
\end{lem}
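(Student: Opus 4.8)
The plan is to collect the identities already established in this subsection and perform one elementary change of variable. The text has shown $\ind(\Psi,\dot\gamma)=\ind(-\Phi,\dot\gamma)=\ind(\Theta,\dot\gamma)$ and $\nul(\Psi,\dot\gamma)=\nul(-\Phi,\dot\gamma)=\nul(\Theta,\dot\gamma)-1$, using the orthogonal splitting $L^2_0(S^1,\R^{2n})=\Tan_{\dot\gamma}\Lambda\oplus\mathrm{span}\{\dot\gamma\}$, the identity $\nabla^2\Theta(\dot\gamma)|_{\Tan_{\dot\gamma}\Lambda}=-\nabla^2\Phi(\dot\gamma)$, and the homogeneity relation $\nabla^2\Theta(\dot\gamma)\dot\gamma=0$; the latter says that the complementary line $\mathrm{span}\{\dot\gamma\}$ contributes exactly one dimension to $\ker\nabla^2\Theta(\dot\gamma)$ but nothing to $\ker\nabla^2\Phi(\dot\gamma)$, which is precisely what produces the ``$-1$'' in the nullity. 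It has also computed $\nul(\Theta,\dot\gamma)=\dim\ker(\diff\phi_H^{c}(\gamma(0))-I)$ directly from the kernel equation $\dot\eta=cJ\nabla^2H(\gamma)\eta$, i.e.\ $\eta(t)=\diff\phi_H^{ct}(\gamma(0))\eta(0)$, and has recorded Ekeland's conjugate-point formula~\eqref{e:Morse} for $\ind(\Theta,\dot\gamma)$.

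Given all this, the nullity formula in the lemma is immediate. For the index formula I would only substitute $s=ct$ in~\eqref{e:Morse}: the map $t\mapsto ct$ is an increasing bijection from $(0,1)$ onto $(0,c)$ and $\diff\phi_H^{ct}(\gamma(0))=\diff\phi_H^{s}(\gamma(0))$, so
\[
\ind(\dot\gamma)=\ind(\Theta,\dot\gamma)=\sum_{t\in(0,1)}\dim\ker\big(\diff\phi_H^{ct}(\gamma(0))-I\big)=\sum_{s\in(0,c)}\dim\ker\big(\diff\phi_H^{s}(\gamma(0))-I\big).
\]
Only finitely many $s\in(0,c)$ are conjugate instants (those with $\ker(\diff\phi_H^{s}(\gamma(0))-I)\neq0$), so the sum is finite and the reindexing is unproblematic.

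In short, there is essentially nothing left to do beyond bookkeeping: the only substantive input is the already-quoted theorem of Ekeland \cite[Theorem~I.4.6]{Ekeland:1990lc} yielding~\eqref{e:Morse}, and the only point requiring care is the reduction from $\Psi$ to the positively $2$-homogeneous functional $\Theta$ — namely that this passage preserves the index and lowers the nullity by one — which is exactly the content of the computation with the splitting $L^2_0=\Tan_{\dot\gamma}\Lambda\oplus\mathrm{span}\{\dot\gamma\}$ done above. Accordingly I would simply assemble \eqref{e:Morse}, the nullity of $\Theta$, and the substitution $s=ct$, and conclude the two displayed formulas.
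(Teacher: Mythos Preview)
Your proposal is correct and matches the paper's approach exactly: the lemma is stated as a summary (``Summing up, we have the following statement'') of the preceding discussion, and the only new step is the change of variable $s=ct$ you describe. There is nothing to add.
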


Let us now assume that our convex contact sphere $Y$ is Besse with common Reeb period $\tau$. For any critical value $c$ of $\Psi$ we have $c/\tau= m/k$ for some relatively prime positive integers $m$ and $k$. The associated critical set $\crit(\Psi)\cap\Psi^{-1}(c)$ is diffeomorphic to the stratum $Y_k:=\fix(\psi^{\tau/k})$, which is a closed contact submanifold of $Y$.
The following statement summarizes the results in \cite{Ginzburg:2019ab} that are relevant here. We remark that the Morse indices in \cite{Ginzburg:2019ab} are referred to the Clarke action functional in the $L^p$ setting with $p\in(2,\infty)$, but turn out to coincide with the Morse indices in the $L^2$-setting $\Lambda$ employed here, see \cite[Prop.~I.7.5]{Ekeland:1990lc}. Indeed, for any $p\in(2,\infty)$, there is a correspondence between the critical points of the Clarke action functionals in $L^p$ and in $\Lambda$, and the Morse indices of corresponding critical points coincide.

\begin{lem}[Ginzburg-Gurel-Mazzucchelli \cite{Ginzburg:2019ab}]
\label{l:GGM}
Assume that $Y$ is a Besse convex contact sphere. Any path-connected component $K\subset\crit(\Psi)$ has even Morse index $\ind(K)$ and odd nullity $\nul(K)=\dim(K)$. In particular, $K$ is a non-degenerate critical manifold of $\Psi$ in the Morse-Bott sense. \hfill\qed
\end{lem}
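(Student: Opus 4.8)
The plan is to read off both parities directly from the index and nullity formulas of Lemma~\ref{l:Morse}, using a single structural input: since the Reeb flow is $\tau$-periodic and $\phi_H^t(\lambda z)=\lambda\psi^t(z)$ for $z\in Y$, the extended Hamiltonian flow satisfies $\phi_H^\tau=\id$ on $\R^{2n}$, so that $\diff\phi_H^\tau(z)=I$ for every $z$. Moreover $\phi_H^t$ is $1$-homogeneous (being the flow of a $2$-homogeneous Hamiltonian), so for a critical point $\dot\gamma\in\crit(\Psi)\cap\Psi^{-1}(c)$ the linearized maps at $\gamma(0)$ coincide with those at the base point $z\in Y$ of the corresponding closed Reeb orbit $s\mapsto\psi^s(z)$. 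The point is then that every linearized Poincar\'e map appearing in Lemma~\ref{l:Morse} is a power of a finite-order symplectic matrix, which makes the two parity statements essentially linear-algebraic.

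\emph{Nullity.} Write $c/\tau=m/k$ with $\gcd(m,k)=1$, so that $z\in Y_k=\fix(\psi^{\tau/k})$ and hence $\psi^{j\tau/k}(z)=z$ for all $j\in\Z$. Iterating the cocycle identity gives $\diff\phi_H^{c}(z)=A^m$ with $A:=\diff\phi_H^{\tau/k}(z)$, and $A^k=\diff\phi_H^\tau(z)=I$; thus $A$ is semisimple with eigenvalues among the $k$-th roots of unity, and since $\gcd(m,k)=1$ an eigenvalue $\zeta$ of $A$ satisfies $\zeta^m=1$ iff $\zeta=1$, whence $\ker(A^m-I)=\ker(A-I)$. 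Now $\phi_H^{\tau/k}$ is the identity on the cone over $Y_k$, it preserves the splitting $\Tan_z\R^{2n}=\R z\oplus\Tan_zY$, and its restriction to $\Tan_zY$ is $\diff\psi^{\tau/k}(z)$; by Bochner's linearization of the smooth $\Z_k=\langle\psi^{\tau/k}\rangle$-action near the fixed point $z$ one has $\Tan_zY_k=\ker(\diff\psi^{\tau/k}(z)|_{\Tan_zY}-I)$. Combining these facts,
\begin{align*}
 \ker\big(\diff\phi_H^{c}(z)-I\big)=\ker(A-I)=\R z\oplus\Tan_zY_k ,
\end{align*}
which has dimension $1+\dim Y_k$ (the dimension of $Y_k$ being taken at $z$, as distinct components of $Y_k$ may have different dimensions). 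Therefore $\nul(\dot\gamma)=\dim\ker(\diff\phi_H^{c}(z)-I)-1=\dim Y_k=\dim K$. Since $\Tan_{\dot\gamma}K$ is always contained in $\ker\nabla^2\Psi(\dot\gamma)$ and we have just matched their dimensions, $K$ is a non-degenerate critical manifold in the Morse--Bott sense; and $\nul(K)=\dim K$ is odd because $Y_k$ is a contact manifold.

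\emph{Evenness of the index.} By Lemma~\ref{l:Morse}, $\ind(\dot\gamma)=\sum_{s\in(0,c)}\dim\ker(\diff\phi_H^s(z)-I)$. Let $T_0=\tau/l$ be the minimal period of the orbit through $z$ (so $k\mid l$ and $c=NT_0$ with $N\in\N$) and set $A_0:=\diff\phi_H^{T_0}(z)$, so $A_0^l=\diff\phi_H^\tau(z)=I$. Split $(0,c)$ at the return times $jT_0$, $1\le j\le N-1$. At such an instant $\diff\phi_H^{jT_0}(z)=A_0^{\,j}$ is symplectic of finite order, and for any finite-order symplectic $M$ the $\C$-eigenspace decomposition together with $\omega(Mv,Mw)=\omega(v,w)$ shows that $\ker(M-I)\otimes\C$ is $\omega$-orthogonal to the sum of the remaining eigenspaces, so that $\omega$ restricts non-degenerately to it; hence $\dim\ker(M-I)$ is even, and every return-time term is even. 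What remains is the contribution of each open interval between consecutive return times, which by the cocycle identity is a ``(twisted) one-period index'' $\sum_{u\in(0,T_0)}\dim\ker(\diff\phi_H^u(z)\,A_0^{\,j}-I)$. That these contributions are even is precisely the content of the index computation in \cite{Ginzburg:2019ab}, where Ekeland's index theory for \emph{convex} Hamiltonian systems, combined with the Besse hypothesis encoded in $A_0^l=I$, is used to compute $\ind(\dot\gamma)$ and establish its evenness. Granting this, $\ind(\dot\gamma)$ is a sum of even integers, and it is constant on $K$ by the Morse--Bott non-degeneracy proved above.

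\emph{Main obstacle.} The nullity computation, and the evenness of the return-time terms, I would carry out in full: they are elementary consequences of $\phi_H^\tau=\id$, the cocycle identity, Bochner linearization, and the evenness of $\dim\ker(M-I)$ for finite-order symplectic $M$. The genuine difficulty is the evenness of the one-period contributions: for $u$ strictly inside a period, $\diff\phi_H^u(z)-I$ need not have an even-dimensional kernel for a general Hamiltonian flow (e.g.\ a single $2\times2$ Jordan block at eigenvalue $1$), and controlling the parity in aggregate genuinely requires the variational index theory of convex Hamiltonian systems. This is the step that uses both the Besse and the convexity hypotheses, and it is the part imported from \cite{Ginzburg:2019ab}.
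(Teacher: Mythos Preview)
The paper gives no proof of this lemma at all: it is stated with a \qed\ and attributed wholesale to \cite{Ginzburg:2019ab}. So your proposal already does strictly more than the paper.

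Your nullity argument is correct and self-contained. The chain $\diff\phi_H^{c}(z)=A^m$, $A^k=I$, $\gcd(m,k)=1\Rightarrow\ker(A^m-I)=\ker(A-I)$, together with the Bochner identification $\ker(A-I)=T_z(\text{cone over }Y_k)=\R z\oplus T_zY_k$, cleanly gives $\nul(\dot\gamma)=\dim Y_k=\dim K$, and hence Morse--Bott non-degeneracy and odd nullity (since $Y_k$ is contact). This is a genuine improvement over simply citing the reference.

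For the index, your decomposition into return-time contributions and one-period ``twisted'' contributions is valid, and the observation that $\dim\ker(A_0^{\,j}-I)$ is even for finite-order symplectic $A_0^{\,j}$ is correct. But the decomposition does not buy you anything: you still import from \cite{Ginzburg:2019ab} exactly the hard part, and you attribute to that paper the statement that \emph{each} twisted one-period sum $\sum_{u\in(0,T_0)}\dim\ker(\diff\phi_H^u(z)A_0^{\,j}-I)$ is even. That is not how the result is packaged there: the argument in \cite{Ginzburg:2019ab} establishes the evenness of the \emph{total} Morse index directly, via the identification with a Maslov-type index and Bott-type iteration under the Besse hypothesis, rather than term by term in your decomposition. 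This is not a fatal gap---since you have shown the return-time part is even, evenness of the total (which is what the reference gives) implies evenness of the remainder---but as written your citation does not support the specific intermediate claim, and the splitting is a detour rather than a reduction. The cleaner fix is simply to cite \cite{Ginzburg:2019ab} for the evenness of $\ind(\dot\gamma)$ outright, keeping your self-contained nullity proof as the added value.
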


Let $K_1,\ldots,K_r$ be the path-connected components of $\crit(\Psi)\cap\Psi^{-1}(c)$. We recall that the \textbf{negative bundle} of $K_i$ is the vector bundle $E^-_i\to K_i$ whose fiber at $\dot{\gamma}\in K_i$ is the direct sum of negative eigenspaces of $\nabla^2\Psi(\dot\gamma)$. 
Let $U^-_i\subset E^-_i$ be compact neighborhoods of the 0-sections, and $U^-=U^-_1\cup\ldots\cup U^-_r$ their disjoint union. If $\epsilon>0$ is small enough,  the sublevel set $\Lambda^{<c+\epsilon}$ is homotopy equivalent to $\Lambda^{<c-\epsilon}\cup_F U^-$, where $F:\partial U^-\to \Lambda^{c-\epsilon}$ is a suitable attaching map. If the negative bundles $E^-_i\to K_i$ are all orientable, then the excision and the Thom isomorphism imply
\[
h^*(\Lambda^{c+\epsilon},\Lambda^{c-\epsilon})\simeq h^*(U^-, U^-\setminus K)\simeq \bigoplus_i h^{*-\ind(K_i)}(K_i),
\]
where $h^*$ denotes either the $S^1$-equivariant or the ordinary cohomology with coefficients in a ring. This will be recalled with more details in the proof of Lemma~\ref{l:local_cohomology}.

\section{Orientability of the negative bundles}
\label{s:orientability}

The goal of this section is to prove that the negative bundles of the critical manifolds of the Clarke action functional $\Psi$ are all orientable.
\subsection{The index form}
We consider the symmetric bilinear form
\begin{gather*}
h_{A,\tau}:L^2_0([0,\tau],\R^{2n})\times L^2_0([0,\tau],\R^{2n})\to\R,
\\
h_{A,\tau}(\dot\zeta,\dot\eta) = \int_0^\tau \langle \zeta(t) + A(t)J\dot\zeta(t),J\dot\eta(t) \rangle\, \diff t,
\end{gather*}
where $\tau\in(0,1]$ and $A:[0,1]\to\Sym^+(2n)$ is a smooth path of positive-definite symmetric matrices. The associated bounded self-adjoint operator \[\HH_{A,\tau}\in\mathcal{L}(L^2_0([0,\tau],\R^{2n})),\] 
which satisfies $h_{A,\tau}(\dot\zeta,\dot\eta)=\langle \HH_{A,\tau}\dot\zeta,\dot\eta\rangle_{L^2}$, is given by
\begin{align*}
 \HH_{A,\tau}\dot\zeta= -J\zeta - JAJ\dot\zeta + \int_0^\tau \big( J\zeta + JAJ\dot\zeta \big)\,\diff t,
\end{align*}
where $\zeta$ is any primitive of $\dot\zeta$. One can show that the spectrum of $\HH_{A,\tau}$ consists of eigenvalues. For each $\lambda\leq0$, the eigenspace 
\[E_{A,\tau}(\lambda):=\ker(\HH_{A,\tau}-\lambda I)\] 
is the vector  space of those $\dot\zeta$ admitting a primitive $\zeta$ such that
\begin{align*}
 \left\{
   \begin{array}{@{}l}
    \dot\zeta = J(A-\lambda I)^{-1} \zeta \\ 
    \zeta(0)=\zeta(\tau)
  \end{array}
 \right.
\end{align*}
In particular, $E_{A,\tau}(\lambda)$ has dimension at most $2n$. It turns out that $\HH_{A,\tau}$ has only finitely many negative eigenvalues, and therefore has a finite dimensional total negative eigenspace
\begin{align*}
 E_{A,\tau}^-:=
 \bigoplus_{\lambda<0} E_{A,\tau}(\lambda).
\end{align*}
The Morse index $\ind(h_{A,\tau}):=\dim(E_{A,\tau}^-)$ is the largest dimension of a vector subspace over which $h_{A,\tau}$ is negative definite. If we denote the nullity by $\nul(h_{A,\tau}):=\dim(E_{A,\tau}(0))=\ker(\HH_{A,\tau})$, then $\ind(h_{A,\tau})+\nul(h_{A,\tau})$ is the largest dimension of a vector subspace over which $h_{A,\tau}$ is negative semidefinite. The inclusions
\begin{align*}
 L^2_0([0,\tau_1],\R^{2n})\hookrightarrow L^2_0([0,\tau_2],\R^{2n})\qquad \forall \tau_1<\tau_2\leq1
\end{align*}
readily imply that the functions $\tau\mapsto\ind(h_{A,\tau})$ and $\tau\mapsto\ind(h_{A,\tau})+\nul(h_{A,\tau})$ are monotone increasing. Actually, these functions are completely determined by the linear symplectic path $\Gamma_A:[0,\tau]\to\Sp(2n)$ defined by
\begin{align*}
 \dot\Gamma_A(t) & = JA(t)^{-1}\Gamma_A(t),\\
 \Gamma_A(0) & = I.
\end{align*}
Indeed, there are canonical isomorphisms
\begin{align}
\label{e:negative_eigenspace_h}
 E_{A,\tau}^- & \cong\bigoplus_{t\in(0,\tau)} \ker(\Gamma_A(t)-I),\\
 \nonumber
 E_{A,\tau}(0) & =\ker(\HH_{A,\tau})\cong \ker(\Gamma_A(\tau)-I).
\end{align}
We refer the reader to Ekeland's monograph \cite[Section~I.4]{Ekeland:1990lc} for more background concerning the indices of $h_{A}$. In Section~\ref{e:orientation},
we shall need the following fact.
\begin{lem}
\label{l:stability_eigenvalue_1}
Let $A_s:[0,1]\to\Sym^+(2n)$ be smooth paths, smoothly depending on $s\in\R$. Assume that $\ker(\Gamma_{A_0}(\tau)-I)\neq\{0\}$ for some $\tau\in(0,1)$. Then, for each $\epsilon>0$, there exists $\delta>0$ such that, for each $s\in(-\delta,\delta)$, we have $\ker(\Gamma_{A_s}(t)-I)\neq\{0\}$ for some $t\in[\tau-\epsilon,\tau+\epsilon]$.
\end{lem}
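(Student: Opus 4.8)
The plan is to reduce the statement to a continuity property of the symplectic path $\Gamma_{A_s}$ in $s$, and then to exploit the fact that an eigenvalue $1$ of $\Gamma_{A_0}(\tau)$ forces a jump (or at least a local extremum behavior) of the Conley--Zehnder-type index along the path. First I would observe that the map $(s,t)\mapsto\Gamma_{A_s}(t)\in\Sp(2n)$ is continuous (indeed smooth) by smooth dependence of solutions of the linear ODE $\dot\Gamma_{A_s}(t)=JA_s(t)^{-1}\Gamma_{A_s}(t)$ on parameters and initial conditions; this is where the hypothesis that $A_s$ depends smoothly on $s$ enters. Using the isomorphisms~\eqref{e:negative_eigenspace_h}, the quantity $\ind(h_{A_s,\tau})=\dim E_{A_s,\tau}^-=\sum_{t\in(0,\tau)}\dim\ker(\Gamma_{A_s}(t)-I)$ counts the ``crossings'' of the eigenvalue $1$ by $\Gamma_{A_s}(t)$ for $t\in(0,\tau)$, and similarly $\ind(h_{A_s,\tau})+\nul(h_{A_s,\tau})$ counts crossings in $(0,\tau]$.

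The key step is the following monotonicity/jump phenomenon: since each $A_s(t)$ is positive-definite, the path $t\mapsto\Gamma_{A_s}(t)$ crosses the ``eigenvalue~$1$'' stratum of $\Sp(2n)$ only in the positive direction (this is the standard fact that makes $\ind(h_{A,\tau})$ monotone increasing in $\tau$, recalled just before the lemma; concretely, if $v(t)$ solves $\dot v=J(A_s-I)^{-1}v$ with — more precisely, if one differentiates the relevant crossing form, the positivity of $A_s$ gives a definite sign). Consequently $\nul(h_{A_s,\tau})>0$ exactly when $\ind(h_{A_s,\cdot})$ jumps at $\tau$, and the total count $\ind(h_{A_s,\tau})+\nul(h_{A_s,\tau})$ is upper semicontinuous in $s$ while $\ind(h_{A_s,\tau-\epsilon})$ (for generic small $\epsilon$ with no crossing at $\tau\pm\epsilon$ for $s=0$, hence — by the continuity above — none for small $s$) is locally constant in $s$. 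Because $\ker(\Gamma_{A_0}(\tau)-I)\neq\{0\}$, we have $\ind(h_{A_0,\tau+\epsilon})\geq\ind(h_{A_0,\tau-\epsilon})+1$; choosing $\epsilon$ so small that there are no crossings of $\Gamma_{A_0}(t)$ with eigenvalue $1$ for $t\in[\tau-\epsilon,\tau)\cup(\tau,\tau+\epsilon]$, continuity of $(s,t)\mapsto\Gamma_{A_s}(t)$ and the fact that crossings are isolated (each $E_{A_s,\tau}(0)$ has dimension $\leq 2n$ and the total negative eigenspace is finite-dimensional) give a $\delta>0$ with $\ind(h_{A_s,\tau-\epsilon})=\ind(h_{A_0,\tau-\epsilon})$ and $\ind(h_{A_s,\tau+\epsilon})\geq\ind(h_{A_0,\tau+\epsilon})$ for $|s|<\delta$. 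Hence for such $s$ the function $t\mapsto\ind(h_{A_s,t})$ is strictly larger at $\tau+\epsilon$ than at $\tau-\epsilon$, so it must jump somewhere in $(\tau-\epsilon,\tau+\epsilon)$, which by~\eqref{e:negative_eigenspace_h} means $\ker(\Gamma_{A_s}(t)-I)\neq\{0\}$ for some $t\in[\tau-\epsilon,\tau+\epsilon]$.

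The main obstacle I expect is the bookkeeping around the endpoints $\tau\pm\epsilon$: one must make sure that, after shrinking $\epsilon$ so that $s=0$ has no crossing at $\tau\pm\epsilon$, the continuity in $s$ really does prevent crossings at $\tau\pm\epsilon$ for small $s$ — this is fine since $\{M\in\Sp(2n):\det(M-I)=0\}$ is closed and $\Gamma_{A_0}(\tau\pm\epsilon)$ avoids it — and that the index $\ind(h_{A_s,\tau-\epsilon})$ is genuinely \emph{locally constant} rather than merely semicontinuous there (it is, precisely because there is no crossing at the endpoint, so no eigenvalue of $\HH_{A_s,\tau-\epsilon}$ sits at $0$, and eigenvalues depend continuously on $s$). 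An alternative, slightly cleaner route avoiding the direction-of-crossing discussion is to argue purely with $\nul$: if the conclusion failed, then $\Gamma_{A_s}(t)-I$ were invertible for all $t\in[\tau-\epsilon,\tau+\epsilon]$, so $\ind(h_{A_s,t})$ would be constant on that interval; but $\ind(h_{A_s,\tau-\epsilon})=\ind(h_{A_0,\tau-\epsilon})$ and, using that the number of crossings is lower semicontinuous under the perturbation (crossings can only split or persist, by continuity and the finite-dimensionality in~\eqref{e:negative_eigenspace_h}), $\ind(h_{A_s,\tau+\epsilon})\geq\ind(h_{A_0,\tau+\epsilon})>\ind(h_{A_0,\tau-\epsilon})$, a contradiction — so the conclusion holds.
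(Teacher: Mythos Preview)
Your argument is correct and follows essentially the same approach as the paper: both exploit the crossing formula~\eqref{e:negative_eigenspace_h} for $\ind(h_{A_s,t})$ together with the lower semicontinuity of $\iota(s,t)=\ind(h_{A_s,t})$ and upper semicontinuity of $\kappa(s,t)=\iota(s,t)+\nul(h_{A_s,t})$ in $s$, and derive a contradiction from the inequality $\iota(0,\tau+\epsilon)>\kappa(0,\tau-\epsilon)$. The only cosmetic difference is that you first shrink $\epsilon$ so that $\nu(0,\tau\pm\epsilon)=0$ in order to make $\iota(s,\tau-\epsilon)$ locally constant, whereas the paper skips this step by comparing $\kappa(s_n,\tau-\epsilon)$ (rather than $\iota$) to $\kappa(0,\tau-\epsilon)$ via upper semicontinuity; both routes are valid.
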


\begin{proof}
We set 
\begin{align*}
\iota(s,t) & :=\ind(h_{A_s,t}),\\
\nu(s,t) & :=\nul(h_{A_s,t})=\dim (\ker(\Gamma_{A_{s}}(t)-I)),\\ 
\kappa(s,t) & :=\iota(s,t)+\nu(s,t).
\end{align*}
so that, by our assumption, $\nu(0,\tau)>0$. By~\eqref{e:negative_eigenspace_h}, we have 
$\iota(0,\tau+\epsilon) > \kappa(0,\tau-\epsilon)$.
The function $\iota$ is lower semicontinuous, whereas the function $\kappa$ is upper semicontinuous. Therefore, for all $t\in[\tau-\epsilon,\tau+\epsilon]$ and for all $s$ sufficiently close to 0 we have
$\iota(0,t)\leq \iota(s,t)$ and $\kappa(0,t)\geq \kappa(s,t)$.
Let us assume by contradiction that there exists a sequence $s_n\to0$ such that $\nu(s_n,t)=0$ for all $t\in[\tau-\epsilon,\tau+\epsilon]$. By~\eqref{e:negative_eigenspace_h}, we have $\kappa(s_n,\tau-\epsilon)= \kappa(s_n,\tau+\epsilon) = \iota(s_n,\tau+\epsilon)$. But this implies
\begin{align*}
\kappa(0,\tau-\epsilon) \geq \kappa(s_n,\tau-\epsilon) = \iota(s_n,\tau+\epsilon) \geq \iota(0,\tau+\epsilon) > \kappa(0,\tau-\epsilon),
\end{align*}
which gives a contradiction.
\end{proof}

In the following, we will set $\tau=1$, and remove it from the notation:
\begin{align*}
h_{A}:=h_{A,1}, \qquad E_{A}(0):=E_{A,1}(0),\qquad E_{A}^-:=E_{A,1}^-.
\end{align*}

\begin{rem}
With the notation of Section~\ref{s:Morse_index}, the bilinear form $h_{A}$ is the Hessian of the functional $\Theta$ at a critical point $\dot\gamma$ when $A(t)=c^{-1}\nabla^2H(\dot\gamma(t))^{-1}$. In this case, the symplectic path $\Gamma_A$ is given by $\Gamma_A(t)=\diff\phi_H^{ct}(\gamma(0))$. The vector space $E_{A,\tau}^-$ is the negative eigenspace of the Hessian $\nabla^2\Theta(\dot\gamma)$, as well as the negative eigenspace of the Hessian of the Clarke action functional $\nabla^2\Psi(\dot\gamma)$.
\hfill\qed
\end{rem}

\subsection{A remark on the eigenvalues of positive symplectic paths}

We will need a variation of \cite[Prop.~I.3.1]{Ekeland:1990lc} from symplectic linear algebra.

\begin{lem}
\label{l:real_eigenvalues}
Let $A:\R\to\Sym^+(2n)$ and $\Gamma:\R\to\Sp(2n)$ be  smooth paths satisfying 
\[\dot\Gamma=JA\Gamma,\qquad\Gamma(0)^k=I\] 
for some positive integer $k$. Then  $\Gamma(t)$ has no real eigenvalues for all $t\neq0$ sufficiently close to $0$.
\end{lem}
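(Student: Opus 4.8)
The plan is to study the evolution of the eigenvalues of $\Gamma(t)$ in a neighborhood of $t=0$ via a first-order analysis, exploiting the positivity of $A$. At $t=0$ we have $\Gamma(0)^k=I$, so every eigenvalue of $\Gamma(0)$ is a $k$-th root of unity; in particular all eigenvalues lie on the unit circle, and the only ones that could be real are $\pm1$. Fix such a real eigenvalue $\mu_0\in\{1,-1\}$ of $\Gamma(0)$ (if neither $1$ nor $-1$ is an eigenvalue there is nothing to prove, since eigenvalues depend continuously on $t$ and no non-real unit-circle eigenvalue can become real without first colliding at $\pm1$). Let $E_0\subset\C^{2n}$ be the (possibly higher-dimensional) generalized eigenspace of $\Gamma(0)$ for $\mu_0$. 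The key point is that on this eigenspace the first variation $\dot\Gamma(0)=JA(0)\Gamma(0)=\mu_0 JA(0)$ (restricted appropriately) is governed by the symmetric positive-definite form $A(0)$, which forces the eigenvalues leaving $\mu_0$ to move \emph{off} the real axis.

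More precisely, first I would reduce to the case where $\Gamma(0)=\mu_0 I$ by restricting to $E_0$: since $\Gamma(0)^k=I$ is semisimple, $\Gamma(0)$ acts on $E_0$ as $\mu_0\,\mathrm{id}$, and $E_0$ is a symplectic subspace of $(\C^{2n},\omega)$ invariant under the flow $\dot\Gamma=JA\Gamma$ (the symplectic orthogonal decomposition $\C^{2n}=E_0\oplus E_0^{\perp_\omega}$ is preserved because $\Gamma(t)\in\Sp(2n)$ and $A(t)$ is $J$-related to a symmetric form, hence the flow preserves this splitting near $t=0$ up to the usual perturbation argument; alternatively one passes to the quotient). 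Then write $\Gamma(t) = \mu_0(I + t B(t))$ on $E_0$ with $B(0) = \mu_0^{-1}\dot\Gamma(0)|_{E_0} = \mu_0^{-1}\mu_0 JA(0)|_{E_0} = JA(0)|_{E_0}$. The eigenvalues of $\Gamma(t)$ near $\mu_0$ are $\mu_0(1 + t\beta + o(t))$ where $\beta$ ranges over eigenvalues of $B(0) = JA(0)|_{E_0}$. Now $JA(0)$ is conjugate (by $A(0)^{1/2}$) to $A(0)^{1/2}JA(0)^{1/2}$, which is a product of $J$ with a positive-definite symmetric matrix, hence a real matrix similar to $J$-type: its eigenvalues are purely imaginary and nonzero (this is exactly the content of the classical fact \cite[Prop.~I.3.1]{Ekeland:1990lc}, and the reason the lemma is phrased as a variation of it). Writing $\beta = i s$ with $s\in\R\setminus\{0\}$, the eigenvalue $\mu_0(1 + ist + o(t)) = \pm(1 + ist + o(t))$ has imaginary part $\pm st + o(t) \neq 0$ for $t\neq0$ small, so it is not real.

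The main obstacle I anticipate is the bookkeeping when the eigenvalue $\mu_0 = \pm1$ of $\Gamma(0)$ is degenerate, i.e. $\dim E_0 > 2$ or $\Gamma(0)$ is only semisimple rather than a scalar on all of $\C^{2n}$: one must make sure that (i) the generalized eigenspace $E_0$ is genuinely $\Gamma(0)$-invariant and symplectic (this follows from semisimplicity of $\Gamma(0)$ together with $\Gamma(0)\in\Sp$, since eigenspaces for eigenvalues $\mu,\mu'$ with $\mu\mu'\neq1$ are symplectically orthogonal and $\mu_0^2 = 1$ is the only self-paired case), and (ii) the perturbative expansion of eigenvalues is valid — here one can invoke the standard first-order perturbation theory for eigenvalues of a holomorphic family of matrices (Rellich/Kato), since the splitting off the axis is detected already at first order by the nonzero imaginary parts of the eigenvalues of $JA(0)|_{E_0}$, so no Puiseux-series subtleties arise. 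A clean way to organize (i)–(ii) is to diagonalize $\Gamma(0)$ and work in each generalized eigenspace separately; only the blocks for $\mu_0 = \pm1$ require the positivity argument, and for those blocks the computation above applies verbatim. Combining over all eigenvalues gives that for $t\neq0$ small, $\Gamma(t)$ has no real eigenvalue, completing the proof.
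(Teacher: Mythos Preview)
Your overall strategy---analyze the first-order motion of the eigenvalues near $\mu_0=\pm1$ and use positivity of $A(0)$ to push them off the real axis---is the right idea, and close in spirit to the paper's proof. But there is a genuine gap in the reduction step. You assert that $E_0=\ker(\Gamma(0)-\mu_0 I)$ is invariant under the flow $\dot\Gamma=JA\Gamma$, so that one may ``restrict'' and set $B(0)=JA(0)|_{E_0}$ as an endomorphism of $E_0$. This is false in general: $JA(0)$ need not preserve $E_0$ (that would force $E_0$ to be $J$-invariant, which there is no reason to expect unless $\Gamma(0)$ happens to be orthogonal). The correct first-order object from perturbation theory of the semisimple eigenvalue $\mu_0$ is the \emph{compression} $B_0:=P\,\dot\Gamma(0)|_{E_0}=\mu_0\,P\,JA(0)|_{E_0}$, where $P$ is the spectral projection onto $E_0$ along the complementary $\Gamma(0)$-invariant subspace. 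Your conjugation trick $JA\sim A^{1/2}JA^{1/2}$ shows that $JA(0)$ has purely imaginary spectrum on all of $\R^{2n}$, but it does not directly apply to the compressed operator $B_0$, which is what you actually need.

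The repair is available and uses precisely the symplectic structure you identified. Since $\mu_0^2=1$, the eigenspace $E_0$ is symplectic and its $\Gamma(0)$-invariant complement equals $E_0^{\perp_\omega}$, so $P$ is the $\omega$-orthogonal projection. For $u,v\in E_0$ one then computes $\omega(B_0 u,v)=\mu_0\,\omega(JA(0)u,v)=-\mu_0\langle A(0)u,v\rangle$; in a symplectic basis of $(E_0,\omega|_{E_0})$ this reads $B_0=\mp J_0 Q_0$ with $Q_0$ the positive-definite restriction of $\langle A(0)\cdot,\cdot\rangle$ to $E_0$, and now the conjugation argument applies on $E_0$ to give purely imaginary nonzero spectrum for $B_0$. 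With this correction your argument goes through. The paper, by contrast, bypasses this bookkeeping entirely: it argues by contradiction, passes to a subsequence of real eigenpairs $(\lambda_n,v_n)\to(\lambda,v)$ with one-sided derivative $\lambda'$, and computes directly $\langle A(0)v,v\rangle=-\lambda\lambda'\langle Jv,v\rangle=0$ using only $J^T=-J$, contradicting $A(0)>0$. Same positivity mechanism, but no need to analyze the compressed operator.
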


\begin{proof}
Let us assume by contradiction that there exist sequences $t_n\to0$ and $\lambda_n\in\sigma(\Gamma(t_n))\cap\R$. Since the eigenvalues of $\Gamma(0)$ are $k$-th roots of unity, up to extracting a subsequence we must have $\lambda_n\to\lambda\in\{1,-1\}$. Since $\Gamma(0)$ is a $k$-th roots of the identity, its real eigenvalue $\lambda$ is semi-simple. If $2m=\dim\ker(\Gamma(0)-\lambda I)$, the eigenvalue $\lambda\in\sigma(\Gamma(0))$ branches into the $2m$ eigenvalues  $\mu_{1}(t),\ldots,\mu_{2m}(t)\in\sigma(\Gamma(t))$ for all $t$ sufficiently close to $0$, where every $\mu_i(t)$ depends continuously on $t$ and admits a left and right derivative at $t=0$ (see, e.g. \cite[Prop. 3.3]{Texier:2018aa}). Therefore, up to extracting a subsequence, we have a finite limit
\begin{align*}
 \lambda':= \lim_{n\to\infty} \frac{\lambda_n-\lambda}{t_n}\in\big\{\dot\mu_i(0^\pm)\ \big|\ i=1,\ldots,2m\big\}.
\end{align*}
Let $v_n\in\R^{2n}$ be an associated sequence of unit eigenvectors, i.e.\ $\Gamma(t_n)v_n=\lambda_nv_n$ and $\|v_n\|=1$. Once again, up to extracting a subsequence, we have $v_n\to v$, where $\|v\|=1$ and $\Gamma(0)v=\lambda v$. Notice that $\lambda=\lambda^{-1}$ and $\Gamma(0)^{-1}v=\lambda v$. Therefore
\begin{align*}
-\langle A(0)v,v \rangle
&=
\langle J\dot\Gamma(0)\Gamma(0)^{-1}v,v \rangle
=
\lambda\langle J\dot\Gamma(0)v,v \rangle\\
&=
\lambda\lim_{n\to\infty}
\left\langle J\frac{\Gamma(t_n)-\Gamma(0)}{t_n}v_n,v \right \rangle\\
&=
\lambda\lim_{n\to\infty}\left(
\frac{\lambda_n}{t_n}\langle J v_n , v \rangle - 
\frac{1}{t_n}\langle J \Gamma(0)v_n , v \rangle \right) \\
&=
\lambda\lim_{n\to\infty}\left(
\frac{\lambda_n}{t_n}\langle J v_n , v \rangle - 
\frac{1}{t_n}\langle J v_n , \Gamma(0)^{-1}v \rangle \right) \\
&=
\lambda\lim_{n\to\infty} \left(
\frac{\lambda_n}{t_n}\langle J v_n , v \rangle - 
\frac{\lambda}{t_n}\langle J v_n , v \rangle \right) \\
& = 
\lambda\lim_{n\to\infty} \frac{\lambda_n-\lambda}{t_n}\langle J v_n , v \rangle 
=
\lambda\lambda' \langle Jv,v\rangle=0.
\end{align*}
This contradicts the the fact that $A(0)$ is positive definite.
\end{proof}

\subsection{Orientation of the negative eigenspaces}
\label{e:orientation}

The following statement provides the orientability of the negative bundles of critical manifolds of the Clarke action functional of Besse convex contact spheres. An analogous statement for geodesic flows was proved by the second author and Wilking in \cite[Section~2]{Radeschi:2017dz}.

\begin{prop}
\label{p:orientability}
Let $k$ be a positive integer and $A:S^1\times [0,1]\to\Sym^+(2n)$ a smooth map, which we see as a loop of paths of symmetric positive definite matrices $A_s(t)=A(s,t)$, such that $s\mapsto\dim E^-_{A_s}$ is constant and $\Gamma_{A_s}(1)^k=I$ for all $s\in S^1$. The family of negative eigenspaces form a vector bundle
\begin{align}
\label{e:negative_bundle}
 \pi:E^-\to S^1,\qquad \pi^{-1}(s)=E^-_{A_s}
\end{align} 
that is trivial.
\end{prop}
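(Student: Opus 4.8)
The statement asserts that a certain vector bundle over $S^1$ is trivial; since any vector bundle over $S^1$ is either trivial or the nontrivial (non-orientable) line-plus-trivial summand, the content is exactly orientability: I need to show $E^-\to S^1$ has vanishing first Stiefel–Whitney class, i.e.\ that the monodromy of the bundle — the linear isomorphism $E^-_{A_0}\to E^-_{A_0}$ obtained by parallel transport once around the loop — is orientation-preserving. Equivalently, I must show that the loop $s\mapsto A_s$ of index forms produces a loop of finite-dimensional negative eigenspaces whose ``turning'' is trivial in $\pi_1$ of the appropriate Grassmannian/frame bundle, detected by the sign of the determinant of the monodromy.

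**Key steps.** First I would use the canonical isomorphism~\eqref{e:negative_eigenspace_h}, which identifies $E^-_{A_s}$ with $\bigoplus_{t\in(0,1)}\ker(\Gamma_{A_s}(t)-I)$, to reduce the problem to a statement purely about the loop of symplectic paths $s\mapsto\Gamma_{A_s}$. The constancy of $\dim E^-_{A_s}$ means that, as $s$ varies over $S^1$, the conjugate instants $t\in(0,1)$ where $\ker(\Gamma_{A_s}(t)-I)\neq\{0\}$ never cross the endpoint $t=1$ and their total multiplicity is conserved; by Lemma~\ref{l:stability_eigenvalue_1} they vary continuously in $s$, so the negative eigenspaces assemble into a genuine continuous (indeed smooth) vector bundle as claimed. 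Next, I would split $E^-$ according to these conjugate instants: near any $s_0$, each conjugate instant $t_j(s)$ contributes a subbundle, but globally the instants may braid among themselves as $s$ traverses the circle, so the monodromy of $E^-$ is a product of (a) the permutation of the conjugate instants and (b) for instants fixed by the permutation, the monodromy of $\ker(\Gamma_{A_s}(t_j(s))-I)$, and (c) a contribution from the ``internal'' structure when instants collide and re-separate. The crucial input is Lemma~\ref{l:real_eigenvalues}: because $\Gamma_{A_s}(1)^k=I$, for $t$ slightly below $1$ the symplectic matrix $\Gamma_{A_s}(t)$ has \emph{no real eigenvalues}, in particular no eigenvalue $1$; combined with the fact (again from~\eqref{e:negative_eigenspace_h} and monotonicity of the index) that conjugate instants accumulate only away from controlled regions, this confines each conjugate instant to a subinterval of $(0,1)$ where one has good local normal forms. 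On each such piece I would model $\ker(\Gamma_{A_s}(t)-I)$ as the kernel of a smooth family of symmetric matrices (via a finite-dimensional reduction / Lyapunov–Schmidt argument as in \cite[Section~I.4]{Ekeland:1990lc}) and use the symplectic structure: a Lagrangian-type pairing forces these kernels to be even-dimensional and, more importantly, carries a complex or quaternionic structure making each summand a \emph{complex} vector bundle, whose underlying real monodromy is automatically orientation-preserving. Finally I would assemble: the permutation of conjugate instants acts by an even permutation on the total $E^-$ (because each instant's kernel has even real dimension $2m_j$, so swapping two instants of equal multiplicity is an even permutation of a basis), the within-instant monodromies are orientation-preserving by the complex-structure argument, and the collision/separation contributions are controlled by continuity of the index; hence $w_1(E^-)=0$ and the bundle is trivial.

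**Main obstacle.** The delicate point is the bookkeeping of the conjugate instants as $s$ runs around $S^1$ — showing that the ``total monodromy'' of $E^-$ genuinely factors as I claimed and that each factor is orientation-preserving, rather than merely the \emph{product} being so by a miracle of cancellation. In particular, when two conjugate instants $t_i(s)$ and $t_j(s)$ approach each other, merge, and split again (which the constancy hypothesis permits, since it only fixes the \emph{total} negative dimension), one must verify that the locally-defined negative subbundle extends smoothly across the collision and that no ``sign'' is picked up; the cleanest route is probably to avoid decomposing by instant altogether near collisions and instead work with the full negative eigenspace $E^-_{A_s}$ as the kernel of a single smooth Fredholm family, exhibiting directly a complex (or symplectic) structure on it that is preserved along the loop — this is the approach taken for geodesic flows in \cite[Section~2]{Radeschi:2017dz}, and adapting it requires checking that the symplectic antisymmetry $\langle J\eta,\dot\zeta\rangle$ underlying $h_A$ furnishes the needed extra structure on the negative eigenspace (not just on the nullspace), which is where Lemma~\ref{l:real_eigenvalues} and the hypothesis $\Gamma_{A_s}(1)^k=I$ do the essential work of ruling out the real eigenvalue $1$ at the boundary and thereby stabilizing the reduction.
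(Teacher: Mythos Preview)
Your proposal has a genuine gap at its core: the claim that the kernels $\ker(\Gamma_{A_s}(t_j(s))-I)$ are even-dimensional and carry a natural complex (or quaternionic) structure is false. For a symplectic matrix $\Gamma$, only the \emph{generalized} eigenspace at eigenvalue $1$ is forced to be even-dimensional; the honest eigenspace can perfectly well be odd-dimensional (e.g.\ the $2\times 2$ Jordan block $\left(\begin{smallmatrix}1&1\\0&1\end{smallmatrix}\right)$ has $\dim\ker(\Gamma-I)=1$). In fact, generically these kernels are one-dimensional real lines, so the individual summands $\theta_j^*R$ in the decomposition $E^-\cong\bigoplus_j\theta_j^*R$ can each be non-orientable. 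Your argument that permutations of conjugate instants are even, and that within-instant monodromies preserve orientation ``by the complex-structure argument'', therefore collapses. The approach in \cite[Section~2]{Radeschi:2017dz} that you cite does \emph{not} proceed via a complex structure on the negative eigenspace; it is essentially the argument the paper reproduces.

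The paper's actual mechanism is quite different and worth internalizing. First one perturbs $\Gamma_0(s,t)=\Gamma_{A_s}(t)$ (rel boundary) to a nearby map $\Gamma_1$ taking values in the open set $\Sp_{\leq 1}(2n)$ of symplectic matrices whose eigenspace at any $\lambda\in(0,1]$ has dimension $\leq 1$, and transverse to the hypersurface $G_1$ where some $\lambda\in(0,1]$ is actually an eigenvalue. Lemma~\ref{l:real_eigenvalues} is used precisely to show that near $t=0$ and $t=1$ no real eigenvalue occurs, so the preimage $W=\psi^{-1}(G_1)\subset C\times(0,1]$ is a compact surface with boundary $\partial W=W\cap(C\times\{1\})$. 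After the perturbation the $i$ conjugate instants $\tau_1(s)<\cdots<\tau_i(s)$ are simple and ordered, so $\partial W$ is a disjoint union of $i$ circles $T_j$ (no braiding). The tautological real line bundle $R\to W$ with fiber $\ker(\Gamma_1(s,t)-\lambda I)$ restricts to each $T_j$, and $E^-\cong\bigoplus_j R|_{T_j}$. The punchline is a cobordism argument: since the boundary class $\sum_j[T_j]$ vanishes in $H_1(W;\Z/2)$, the pairing $\sum_j\langle w_1(R),[T_j]\rangle=\sum_j r_j$ is zero mod $2$. Thus an \emph{even} number of the line bundles $R|_{T_j}$ are non-orientable, and their direct sum $E^-$ is orientable. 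Nothing here relies on any individual summand being orientable or even-dimensional.
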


\begin{proof}
We consider the spaces
\begin{align*}
G_{\leq 1} & = \big\{ (M,\lambda)\in\Sp(2n)\times(0,1]\ \big|\ \dim\ker(M-\lambda I)\leq1 \big\},\\
G_{1} & = \big\{ (M,\lambda)\in\Sp(2n)\times(0,1]\ \big|\ \dim\ker(M-\lambda I)=1 \big\},\\
\Sp_{\leq 1}(2n) & = \big\{ M\in\Sp(2n)\ \big|\ (M,\lambda)\in G_{\leq1} \quad\forall\lambda\in(0,1]\big\},
\end{align*}
equipped with their standard topology. The space $G_{\leq 1}$ is open in $\Sp(2n)\times(0,1]$. By~\cite[Appendix~A]{Radeschi:2017dz}, $G_1$ is a smooth hypersurface in $G_{\leq 1}$ with boundary
\begin{align*}
\partial G_1=(\Sp_{\leq 1}(2n)\times\{1\}) \cap G_1.
\end{align*}
Moreover, the complement $\Sp(2n)\setminus\Sp_{\leq 1}(2n)$ is a stratified manifold whose top-dimensional stratum has codimension~$3$.

We denote by $C=S^1\times[0,1]$ the cylinder, by $\mathrm{int}(C)=S^1\times(0,1)$ its interior, and set
\begin{align*}
 \Gamma_0:C\to\Sp(2n),\qquad \Gamma_0(s,t)=\Gamma_{A_s}(t).
\end{align*}
By the properties mentioned in the previous paragraph, we can smoothly extend $\Gamma_0$ to a homotopy
$\Gamma_r:C\to\Sp(2n)$, $r\in[0,1]$, such that:
\begin{itemize}
\item[(i)] each $\Gamma_r$ is $C^\infty$ close to $\Gamma_0$,
\item[(ii)] $\Gamma_r|_{\partial C}=\Gamma_0$ for all $r\in[0,1]$,
\item[(iii)] $\Gamma_1(\mathrm{int}(C))\subset \Sp_{\leq 1}(2n)$,
\item[(iv)] $\psi:\mathrm{int}(C)\times(0,1]\to G_{\leq 1}$, $\psi(s,t,\lambda)=(\Gamma_1(s,t),\lambda)$  is transverse to $G_1$.
\end{itemize}
Condition~(i) guarantees that each symmetric matrix 
\[B_r(s,t) = -J(\partial_t\Gamma_r(s,t))\Gamma_r(s,t)^{-1} \]
is positive definite (being close to $A_s(t)$). Condition~(ii) implies that the function 
\[r\mapsto\dim(E^-_{B_r(s,\cdot)})\]
is constant. Therefore, the vector bundle \eqref{e:negative_bundle} extends to a vector bundle
\begin{align*}
  \pi:\widetilde E^-\to [0,1]\times S^1,\qquad \pi^{-1}(r,s)=E^-_{B_r(s,\cdot)}.
\end{align*}
The vector bundles $\widetilde  E^-|_{\{0\}\times S^1}=E^-$ and $\widetilde E^-|_{\{1\}\times S^1}$ are isomorphic. Therefore, it is enough to prove the proposition for $B_1$ instead of $A$. In order to simplify the notation we will just assume that $A=B_1$, and thus $\Gamma:=\Gamma_1=\Gamma_0$.

Notice that $\Gamma|_{\partial C}$ takes values inside the subspace of symplectic roots of the identity $I$. Therefore, by Lemma~\ref{l:real_eigenvalues}, there exists an open neighborhood $U\subset C$ of $\partial C$ such that, for each $u=(s,t)\in U\setminus \partial C$,
the matrix $\Gamma(u)$ has no real eigenvalues. This, together with the transversality condition (iv), implies that the preimage $W:=\psi^{-1}(G_1)$ is a compact surface embedded in $C\times(0,1]$ with boundary $\partial W=W\cap(C\times\{1\})$.

We denote the Morse index of $h_{A_s}$ by
\[i:=\dim E^-_{A_s},\] 
which is independent of $s\in S^1$ by assumption. Since $\Gamma(\mathrm{int}(C))\subset\Sp_{\leq1}(2n)$, for each $s\in S^1$ there exist $0<\tau_1(s)<\ldots<\tau_i(s)<1$ such that \[\ker(\Gamma(s,\tau_j(s))-I)\neq\{0\},\qquad \forall j=1,\ldots,i.\] 
Namely, for each $s\in S^1$, the intersection $\partial W\cap(\{s\}\times[0,1]\times \{1\})$ has precisely $i$ elements given by
\begin{align*}
  W\cap(\{s\}\times[0,1]\times \{1\}) = \big\{(s,\tau_j(s),1)\ \big|\ j=1,\ldots,i\big\}.
\end{align*}
By Lemma~\ref{l:stability_eigenvalue_1}, the functions $s\mapsto\tau_j(s)$ are continuous. Therefore, the boundary $\partial W=W\cap(C\times\{1\})$ is the disjoint union of $i$ embedded circles $T_1\cup\ldots\cup T_i$, where 
\[T_j=\big\{(s,\tau_j(s),1)\ \big|\ s\in S^1\big\}, \qquad j=1,\ldots,i.\]
Since the circles $T_j$ are smooth, the functions $s\mapsto\tau_j(s)$ are smooth as well.
Notice that, since $\tau_j(s)<\tau_{j+1}(s)$, every circle $T_j$ is enclosed by the subsequent one $T_{j+1}$.

Now, we consider the real line bundle 
\begin{align*}
\pi:R\to W,\qquad \pi^{-1}(s,t,\lambda) = \ker(\Gamma(s,t)-\lambda I).
\end{align*}
For each $j=1,\ldots,i$, we define $r_j\in\{0,1\}$ as
\begin{align*}
 r_j
 :=
 \left\{
   \begin{array}{@{}ll}
    0,   & \mbox{if $R|_{T_j}\to T_j$ is orientable,} \vspace{3pt} \\ 
    1,   & \mbox{otherwise.}  
  \end{array}
 \right.
\end{align*}
Since $W$ is a compact surface embedded in $C\times[0,1]$ with boundary $\partial W=T_1\cup\ldots\cup T_i\subset C\times\{1\}$, it must be orientable, and therefore $r_1+\ldots+r_i$ is even. For each $j=1,\ldots,i$, we introduce the diffeomorphism
\begin{align*}
 \theta_j:S^1\ttoup^{\cong} T_j,\qquad\theta_j(s)=(s,\tau_j(s),1),
\end{align*}
and consider the vector bundle
\begin{align*}
N:=\theta_1^*R\oplus\ldots\oplus\theta_k^*R \to S^1.
\end{align*}
Notice that every fiber of this bundle is isomorphic to the corresponding fiber of the negative bundle $E^-\to S^1$ of Equation~\eqref{e:negative_bundle} via the canonical isomorphism~\eqref{e:negative_eigenspace_h}. Therefore, $N\to S^1$ and $E^-\to S^1$ are isomorphic vector bundles. Finally, $N$ is the direct sum of real line bundles, $r_1+\ldots+r_i$ of which are non-orientable. Since $r_1+\ldots+r_i$ is even, $N$ is orientable, and thus a trivial vector bundle.
\end{proof}

\subsection{Critical sets of Besse convex contact spheres}

As we mentioned, Proposition~\ref{p:orientability} guarantees that the negative bundles of the critical manifolds of the Clark action functional are always orientable under the Besse assumption. A standard argument from Morse theory thus implies that the critical manifolds are all homologically visible. More precisely, we have the following statement. Given a topological space $X$, we denote as usual by $\pi_0(X)$ the family of its path-connected components.

\begin{lem}
\label{l:local_cohomology}
Let $Y$ be a Besse convex contact sphere. We denote by $h^*$ either the $S^1$-equivariant or the ordinary cohomology functor with coefficients in a ring $R$.
\begin{itemize}
\item[$\mathrm{(i)}$] If an interval $[a,b)\subset\R$ contains a unique critical value $c$ of $\Psi$, then the critical set $K_c=\crit(\Psi)\cap\Psi^{-1}(c)$ has local cohomology
\begin{align*}
h^*(\Lambda^{<b},\Lambda^{<a}) \cong \bigoplus_{K\in\pi_0(K_c)} \!\!\!  h^{*-\ind(K)}(K).
\end{align*}
If $h^*$ is the $S^1$-equivariant cohomology with coefficients in $R$, this is an isomorphism of $H^*(BS^1;R)$-modules.

\item[$\mathrm{(ii)}$] Let $[a,b)\subset\R$ be an interval, and $d\geq0$ an integer. If
\begin{align*}
h^{d-\ind(K)}(K)=0,\qquad\forall K\in\pi_0(\crit(\Psi)\cap\Psi^{-1}[a,b)),
\end{align*}
then $h^d(\Lambda^{<b},\Lambda^{<a}) =0$.

\end{itemize}
\end{lem}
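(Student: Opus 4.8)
The plan is to run the standard Morse--Bott machinery for the $C^{1,1}$ functional $\Psi$, using the non-degeneracy provided by Lemma~\ref{l:GGM}, the orientability of the negative bundles from Proposition~\ref{p:orientability}, and the Palais--Smale condition together with the Clarke variational principle (Theorem~\ref{t:Clarke}). For part $\mathrm{(i)}$, I would first reduce to the case that $[a,b)$ contains the single critical value $c$, and shrink $a,b$ so that $a=c-\epsilon$, $b=c+\epsilon$ with $\epsilon>0$ small. By the deformation lemma (which holds here since $\Psi$ satisfies Palais--Smale and, via the finite-dimensional reduction of \cite{Ekeland:1987aa}, can be treated as $C^2$), $\Lambda^{<c+\epsilon}$ deformation retracts onto $\Lambda^{<c-\epsilon}\cup_F U^-$, where $U^-=U^-_1\cup\dots\cup U^-_r$ is the disjoint union of compact disk-bundle neighborhoods of the zero sections of the negative bundles $E^-_i\to K_i$ over the path-connected components $K_1,\dots,K_r$ of $K_c$, and $F\colon\partial U^-\to\Lambda^{<c-\epsilon}$ is the attaching map. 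This is exactly the situation recalled at the end of Section~\ref{s:Morse_index}. Excision then gives
\begin{align*}
 h^*(\Lambda^{<b},\Lambda^{<a})
 \cong
 h^*(U^-,U^-\setminus K_c)
 \cong
 \bigoplus_{i=1}^r h^*(U^-_i, U^-_i\setminus K_i).
\end{align*}

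The next step is the Thom isomorphism. Since each $K_i$ is a closed manifold (diffeomorphic to a stratum $Y_k$, hence compact) and, by Proposition~\ref{p:orientability}, the negative bundle $E^-_i\to K_i$ is orientable of rank $\ind(K_i)$, the Thom isomorphism yields $h^*(U^-_i,U^-_i\setminus K_i)\cong h^{*-\ind(K_i)}(K_i)$. In the ordinary-cohomology case this is classical; in the $S^1$-equivariant case one applies it to the Borel construction $U^-_i\times_{S^1}ES^1$, noting that $E^-_i$ is an $S^1$-equivariant vector bundle over $K_i$ whose Borel construction is an (ordinary) orientable rank-$\ind(K_i)$ bundle over $K_i\times_{S^1}ES^1$ — here the orientability passes to the Borel construction because $S^1$ is connected and hence acts orientation-preservingly on the fibers, so the obstruction is still just $w_1(E^-_i)=0$. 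The Thom isomorphism for the bundle $E^-_i\times_{S^1}ES^1$ is $H^*(BS^1;R)$-linear (it is cap/cup product with the Thom class, which is pulled back along a map to $K_i\times_{S^1}ES^1$, hence commutes with the module structure coming from $\pi^*$ of the classifying map to $BS^1$), giving the claimed isomorphism of $H^*(BS^1;R)$-modules. Summing over $i$ and reindexing $\pi_0(K_c)=\{K_1,\dots,K_r\}$ gives part $\mathrm{(i)}$.

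For part $\mathrm{(ii)}$, I would let $c_1<\dots<c_m$ be the (finitely many, by Palais--Smale) critical values of $\Psi$ in $[a,b)$ and choose regular values $a=t_0<c_1<t_1<c_2<\dots<c_m<t_m=b$. Iterating part $\mathrm{(i)}$ on each interval $[t_{j-1},t_j)$ gives $h^d(\Lambda^{<t_j},\Lambda^{<t_{j-1}})\cong\bigoplus_{K\in\pi_0(K_{c_j})}h^{d-\ind(K)}(K)$, which vanishes by the hypothesis of $\mathrm{(ii)}$. Then the long exact sequence of the triple $(\Lambda^{<t_j},\Lambda^{<t_{j-1}},\Lambda^{<a})$ and an induction on $j$ — using at each step that both $h^d(\Lambda^{<t_j},\Lambda^{<t_{j-1}})$ and (inductively) $h^d(\Lambda^{<t_{j-1}},\Lambda^{<a})$ vanish, hence $h^d(\Lambda^{<t_j},\Lambda^{<a})$ is squeezed to zero — yields $h^d(\Lambda^{<b},\Lambda^{<a})=h^d(\Lambda^{<t_m},\Lambda^{<t_0})=0$. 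The main obstacle, and the point deserving the most care, is the $C^{1,1}$ (rather than $C^2$) regularity of $\Psi$: one must invoke the finite-dimensional reduction of Ekeland--Hofer so that the deformation lemma, the local normal form near a critical manifold (so that sublevel sets are built by attaching the negative disk bundle), and the Morse--Bott non-degeneracy of Lemma~\ref{l:GGM} are all legitimately available; the rest is bookkeeping with excision, Thom isomorphisms, and exact sequences.
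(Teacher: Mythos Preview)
Your argument for part~(i) is essentially the paper's: Morse--Bott deformation to the negative disk bundle, excision, and the (equivariant) Thom isomorphism using Proposition~\ref{p:orientability}. Your remark that the Borel construction of $E^-_i$ stays orientable because $S^1$ is connected is exactly what makes the equivariant Thom isomorphism go through, and the $H^*(BS^1;R)$-linearity is handled correctly.

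There is one genuine gap in part~(ii): you assert that there are only finitely many critical values $c_1<\dots<c_m$ in $[a,b)$, citing Palais--Smale. This is fine when $b<\infty$, but the lemma is stated (and later applied, e.g.\ to compute $H^*_{S^1}(\Lambda,\Lambda_{r_1})$ in the proof of Proposition~\ref{p:zero_odd_cohomology}) with $b=\infty$, where the action spectrum contains infinitely many values. The paper's proof handles this by choosing an infinite increasing sequence $a=a_0<a_1<a_2<\dots\to\infty$ with at most one critical value in each $[a_j,a_{j+1})$, proving $h^d(\Lambda^{<a_{j_2}},\Lambda^{<a_{j_1}})=0$ for all $j_1<j_2$ by your inductive triple argument, and then passing to the inverse limit $h^d(\Lambda^{<\infty},\Lambda^{<a})\cong\varprojlim_j h^d(\Lambda^{<a_j},\Lambda^{<a})=0$. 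You should add this step; the rest of your argument is correct and matches the paper's.
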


\begin{proof}
Even though the Clarke action functional $\Psi$ may only be $C^{1,1}$ for our Besse convex contact sphere $Y$, the finite dimensional reduction provided in \cite{Ekeland:1987aa} allows to treat it as a $C^2$ functional in the applications. The Besse assumption implies that $\Psi$ is Morse-Bott (Lemma~\ref{l:GGM}). Any connected component $K\subset\crit(\Psi)$ is a closed manifold. We consider the negative bundle $\pi:E^-\to K$, which is the vector bundle of rank $\ind(K)$, whose fibers $\pi^{-1}(\dot\gamma)$ are the negative eigenspaces of the Hessian $\nabla^2\Psi(\dot\gamma)$. We also consider the positive bundle $\pi:E^+\to K$, whose fibers $\pi^{-1}(\dot\gamma)$ are the infinite-dimensional positive eigenspaces of the Hessian $\nabla^2\Psi(\dot\gamma)$. The direct sum $E^+\oplus E^-\to K$ is isomorphic to the normal bundle $NK\to K$. Given sufficiently small $S^1$-invariant open neighborhoods $U^\pm\subset E^\pm$ of the 0-section, the Morse Lemma allows to identify  $U^+\oplus U^-\subset E^+\oplus E^-$ with an $S^1$-invariant tubular neighborhood $U\subset \Lambda$ of $K$;  under such an identification, the 0-section of $E^+\oplus E^-$ corresponds to the critical set $K$, and the Clarke action functional takes the form
\begin{align*}
\Psi(x,y)= c -\|x\|_{L^2}^2 + \|y\|^2_{L^2},\qquad\forall (x,y)\in U^-\oplus U^+\equiv U,
\end{align*}
where $c=\Psi(K)$ is the critical value. We denote by $U^{<c}:=\{u\in U\ |\ \Psi(u)<c\}$ the critical sublevel set of $\Psi|_U$. Since the functional $\Psi|_U$ is a non-degenerate quadratic form in the normal directions to $K$, the inclusion $(U^-,U^-\setminus\{0\})\hookrightarrow (U,U^{<c})$ admits an  $S^1$-equivariant homotopy inverse, and in particular induces an isomorphism
\begin{align}
\label{e:iso_U_1}
 h^*(U,U^{<c}) \toup^{\cong} h^*(U^-,U^-\setminus\{0\}).
\end{align}
By Proposition~\ref{p:orientability}, the negative bundle $E^-\to K$ is orientable. Therefore, the cup product with the Thom class $\tau\in h^{\ind(K)}(U^-,U^-\setminus\{0\})$ induces an isomorphism
\begin{align}
\label{e:iso_U_2}
 h^{*-\ind(K)}(K) \ttoup^{\smallsmile \tau}_{\cong} h^*(U^-,U^-\setminus\{0\}).
\end{align}
Notice that, when $h^*$ is the $S^1$-equivariant cohomology with coefficients in a ring $R$, both \eqref{e:iso_U_1} and \eqref{e:iso_U_2} are isomorphisms of $H^*(BS^1;R)$-modules.

Assume now that the interval $[a,b)$ contains a unique critical value $c$ of $\Psi$. Since $\Psi$ satisfies the Palais-Smale condition, the critical set $\crit(\Psi)\cap\Psi^{-1}(c)$ has finitely many path-connected components $K_1,\ldots,K_r$. Let $U_i\subset\Lambda^{<b}$ be an $S^1$-invariant tubular neighborhood of $K_i$ given by the Morse lemma. We require the $U_i$'s to be small enough so that they are pairwise disjoint, and we set $U:=U_1\cup\ldots\cup U_r$. The inclusions 
$\Lambda^{<c}\cup U \hookrightarrow \Lambda^{<b}$
and
$\Lambda^{<a} \hookrightarrow \Lambda^{<c}$
admit $S^1$-equivariant homotopy inverses that can be constructed by pushing with the anti-gradient flow of $\Psi$. In particular, we have the isomorphisms induced by the inclusion
\begin{equation*}
 \begin{tikzcd}
 h^*(\Lambda^{<b},\Lambda^{<a}) 
 &
 h^*(\Lambda^{<b},\Lambda^{<c}) 
 \arrow[r, "\cong"]  \arrow[l, "\cong"'] 
 &
 h^*(\Lambda^{<c}\cup U,\Lambda^{<c}) .
\end{tikzcd}
\end{equation*}
The excision property of cohomology, and the isomorphisms~\eqref{e:iso_U_1} and~\eqref{e:iso_U_2} give
\begin{equation*}
 \begin{tikzcd}
 h^*(\Lambda^{<c}\cup U,\Lambda^{<c}) 
 \arrow[r, "\cong"]
 &
 \displaystyle\bigoplus_{i=1}^r h^*(U_i,U_i^{<c})
 \arrow[r, "\cong"]
 &
 \displaystyle\bigoplus_{i=1}^r h^{*-\ind(K_i)}(K_i).
\end{tikzcd}
\end{equation*}
When $h^*$ is the $S^1$-equivariant cohomology with coefficients in a ring $R$, all the arrows are isomorphisms of $H^*(BS^1;R)$-modules. This proves point (i).

As for point (ii), we fix a sequence of real numbers $a=a_0<a_1<a_2< \ldots \leq b$ such that each interval $[a_j,a_{j+1})$ contains at most one critical value of $\Psi$. If $b<\infty$, we require this sequence to contain finitely many elements, say $k+1$, and $a_k=b$; Otherwise, the sequence is an infinite one and  $a_j\to\infty$. Assume now that $h^{d-\ind(K)}(K)=0$ for each connected component $K\in\pi_0(\crit(\Psi)\cap\Psi^{-1}[a,b))$, so that
$h^d(\Lambda^{<a_{j+1}},\Lambda^{<a_j})=0$ according to point (i) of the lemma. For each $j_1<j_2<j_3$, we have a long exact sequence
\begin{align*}
 \ldots\toup
 h^*(\Lambda^{<a_{j_3}},\Lambda^{<a_{j_2}})
 \toup
 h^*(\Lambda^{<a_{j_3}},\Lambda^{<a_{j_1}})
 \toup
 h^*(\Lambda^{<a_{j_2}},\Lambda^{<a_{j_1}})
 \toup
 \ldots
\end{align*}
which, together with the vanishing of  $h^d(\Lambda^{<a_{j+1}},\Lambda^{<a_j})=0$, readily implies that $h^d(\Lambda^{<a_{j_2}},\Lambda^{<a_{j_1}})=0$ for all $j_1<j_2$. This proves point (ii) when $b$ is finite. When $b=\infty$, the statement follows by taking the inverse limit of $h^d(\Lambda^{<a_j},\Lambda^{<a_0})$.
\end{proof}

\section{Perfectness of the Clarke action functional}
\label{s:perfectness}

\subsection{Topology of the domain of the Clarke action functional}
One of the main ingredients for the proof of Theorem~\ref{t:perfect} is the fact that the Clarke action functional $\Psi:\Lambda\to(0,\infty)$ of a Besse convex contact sphere is perfect in the sense of Morse theory for the $S^1$-equivariant rational cohomology. In order to prove this fact, we first need the following statement concerning the topology of $\Lambda$, which is certainly well known to the experts.

\begin{lem}
\label{l:domain_Clarke}
The space $\Lambda$ is $S^1$-equivariantly homotopy equivalent to the unit sphere of a separable complex Hilbert space. In particular, $\Lambda$ is contractible and its $S^1$-equivariant cohomology ring is given by
\begin{align*}
 H^*_{S^1}(\Lambda;\Z)
 =
 \Z[e],
\end{align*}
where $e\in H^2_{S^1}(\Lambda;\Z)$ is the Euler class of the $S^1$-bundle $\Lambda\times ES^1\to\Lambda\times_{S^1}ES^1$.
\end{lem}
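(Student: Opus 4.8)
The plan is to show that $\Lambda = \G^{-1}(1)\cap\A^{-1}(0,\infty)$ is $S^1$-equivariantly homotopy equivalent to the unit sphere of a separable infinite-dimensional complex Hilbert space, and then invoke the standard fact that such a sphere is contractible (it is the unit sphere of a Hilbert space, hence contractible by a Kuiper/Klee-type argument) and that the $S^1$-action on it — coming from the time-translation $t\cdot\dot\gamma = \dot\gamma(t+\cdot)$ — is free, so its equivariant cohomology is that of $BS^1$, namely $\Z[e]$. The first and main step is therefore to identify the homotopy type of $\Lambda$ together with its $S^1$-action.

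The key algebraic observation is that $\G$ is a positive-definite, $2$-homogeneous, $S^1$-invariant function on the separable Hilbert space $V := L^2_0(S^1,\R^{2n})$, smooth away from the origin, and comparable to the square norm (since $H^*$ is $2$-homogeneous, convex, positive away from $0$, and squeezed between two multiples of $|\cdot|^2$). Hence $\sqrt{\G}$ is a norm-like function, and the $S^1$-equivariant radial retraction $\dot\gamma\mapsto \dot\gamma/\sqrt{\G(\dot\gamma)}$ provides an $S^1$-equivariant homeomorphism from $V\setminus\{0\}$ onto $\G^{-1}(1)$ (the "$\G$-sphere"), with equivariant deformation retraction of $V\setminus\{0\}$ onto it. So the plan is: first identify $\G^{-1}(1)$ $S^1$-equivariantly with the unit sphere $S(V)$ of $V$ via this radial rescaling composed with a further radial rescaling to the genuine $L^2$-unit sphere — both maps are $S^1$-equivariant because $\G$ and $\|\cdot\|_{L^2}$ are $S^1$-invariant. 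Next one must cut out the open condition $\A > 0$. Here I would use that $\A$, restricted to $\G^{-1}(1)$, has $\A^{-1}(0,\infty)$ as an open $S^1$-invariant subset, and argue this inclusion is an $S^1$-equivariant homotopy equivalence — for instance by exhibiting $\G^{-1}(1)\cap\A^{-1}[0,\infty)$, or better yet $\G^{-1}(1)$ itself, as $S^1$-equivariantly deformation retracting onto $\Lambda$; this can be done via the anti-gradient-type flow of (a rescaling of) $\A$ on the $\G$-sphere, using that $\A$ has no $S^1$-fixed critical points except the origin (which is excluded) and that on an infinite-dimensional sphere the "subset where a linear-type functional is positive" is contractible and full. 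Concretely, the quadratic form $\A$ on $V$ has infinitely many positive and infinitely many negative eigendirections (it is the standard symplectic action form), so $S(V)\cap\A^{-1}(0,\infty)$ is the complement in $S(V)$ of the closed set $\A^{-1}(-\infty,0]$, and one checks this is $S^1$-equivariantly homotopy equivalent to $S(V)$ itself because removing a "half-space slab" from an infinite-dimensional sphere does not change its homotopy type and can be done equivariantly since $\A$ is $S^1$-invariant.

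Finally, with $\Lambda \simeq_{S^1} S(\HH)$ for $\HH$ a separable infinite-dimensional complex Hilbert space (the complex structure being supplied by $J$, which commutes with the $S^1$-action and with $\A$, $\G$), one has that $S(\HH)$ is contractible and the $S^1$-action by scalar multiplication by unit complex numbers is free, so $S(\HH)$ is a model for $ES^1$, its quotient $S(\HH)/S^1 = \CP^\infty$ is a model for $BS^1$, and therefore $H^*_{S^1}(\Lambda;\Z) = H^*(\CP^\infty;\Z) = \Z[e]$ with $e$ in degree $2$ the Euler class as stated. I expect the main obstacle to be the careful bookkeeping of the second step — showing that excising the region $\A\le 0$ is an $S^1$-equivariant homotopy equivalence on the infinite-dimensional $\G$-sphere — since one must produce an explicit $S^1$-equivariant deformation (e.g. flowing along a pseudo-gradient of $\A$ suitably rescaled to remain on $\G^{-1}(1)$, or first transferring everything to the genuine $L^2$-sphere where the linear algebra of the quadratic form $\A$ is transparent) and verify it is well defined despite $\G$ and $H^*$ being merely $C^1$; the smoothness subtleties are handled exactly as in the finite-dimensional reduction of Ekeland-Hofer, so I would cite \cite{Ekeland:1987aa} for that technical point and keep the homotopy-theoretic argument clean.
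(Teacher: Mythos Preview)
Your proposal contains two genuine errors, and the second of them is where the paper's argument really diverges from yours.

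\textbf{First error (final step).} The time-translation action $t\cdot\dot\gamma=\dot\gamma(t+\cdot)$ is \emph{not} scalar multiplication by $e^{2\pi it}$ with respect to the complex structure $J$, and it is \emph{not} free on the $\G$-sphere. On the Fourier block $E_k=\mathrm{span}\{s\mapsto\exp(2\pi ksJ)v\}$ the action has weight $k$, so any $\dot\gamma$ supported in a single $E_k$ with $|k|>1$ has stabiliser $\Z_{|k|}$. Thus $S(\HH)$ with this action is not a model for $ES^1$, and your ``$H^*_{S^1}(\Lambda)=H^*(\CP^\infty)$ because the action is free'' step collapses. This is easily repaired: once you know $\Lambda$ is contractible, the Gysin sequence of the Borel bundle $\Lambda\times ES^1\to\Lambda\times_{S^1}ES^1$ forces $H^*_{S^1}(\Lambda;\Z)=\Z[e]$ with no freeness hypothesis. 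This is exactly what the paper does.

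\textbf{Second error (your step 2).} You want to retract the full $\G$-sphere $\G^{-1}(1)\cong S(V)$ onto $\Lambda=\G^{-1}(1)\cap\A^{-1}(0,\infty)$ via a (pseudo-)gradient flow of $\A$. This cannot work: $\A$ restricted to the sphere has critical points at every unit vector in each $E_k$, in particular throughout $S(E_-)\subset\{\A<0\}$, and those points are fixed by the flow. There is no evident $S^1$-equivariant deformation pushing $S(E_-)$ across the wall $\A=0$. The paper avoids this obstacle by reversing the order of the two reductions: it \emph{first} drops the constraint $\G=1$ via the radial retraction $r_s(\dot\gamma)=(1+s(\sqrt{\G(\dot\gamma)}-1))^{-1}\dot\gamma$, obtaining $\Lambda\simeq_{S^1}\A^{-1}(0,\infty)$; and \emph{then}, working in the open cone $\A^{-1}(0,\infty)$ rather than on a sphere, uses the linear homotopy $\dot\gamma_+ +(1-s)\dot\gamma_-$ to retract onto $E_+\setminus\{0\}$. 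The point is that along this linear homotopy $\A$ only increases (since $\A(\dot\gamma_-)\le0$), so one never leaves $\A^{-1}(0,\infty)$ and no critical-point obstruction arises. After a final norm-rescaling one lands on $S(E_+)$, contractible, and Gysin finishes the job.

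In short: your outline identifies the right ingredients but applies them in the wrong order, and the order matters---retracting the sphere onto its ``positive half'' fails, whereas retracting the open positive cone onto its positive-definite subspace succeeds.
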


\begin{proof}
The Hilbert space $L^2_0(S^1,\R^{2n})$ splits as a orthogonal direct sum decomposition $E_-\oplus E_+$, where
\begin{align*}
 E_\pm & =  \bigoplus_{\pm k>0} E_k,\\
 E_k & = \mathrm{span}\big\{t\mapsto \exp(2\pi k t J )v\ \big|\ v\in\R^{2n}\big\}.
\end{align*}
Notice that the $S^1$-action on $L^2_0(S^1,\R^{2n})$ is a diagonal action on each factor $E_k$. The functional $\A$ is a non-degenerate quadratic form of zero signature on $L^2_0(S^1,\R^{2n})$, for
\begin{align*}
 \A(\dot\gamma) = \sum_{k\neq 0} \frac{1}{2\pi k} \|\dot\gamma_k\|_{L^2}^2.
\end{align*}
Here, we have written $\dot\gamma$ according to the above orthogonal direct sum decomposition as
\begin{align*}
 \dot\gamma=\sum_{k\neq 0} \dot\gamma_k,
\end{align*}
with $\dot\gamma_k\in E_k$. The inclusion 
$\Lambda\hookrightarrow\A^{-1}(0,\infty)$
is an $S^1$-equivariant homotopy equivalence, whose homotopy inverse is the time-1 map $r_1:\A^{-1}(0,\infty)\to \Lambda$ of the $S^1$-equivariant deformation retraction
\begin{align*}
r_s:\A^{-1}(0,\infty)\to \A^{-1}(0,\infty),
\qquad
r_s(\dot\gamma)= (1+s(\sqrt{\G(\dot\gamma)}-1))^{-1} \dot\gamma.
\end{align*}
Clearly, the inclusion $E_+\setminus\{0\}\hookrightarrow \A^{-1}(0,\infty)$ is an $S^1$-equivariant homotopy equivalence as well, and so is the inclusion of the unit sphere $S(E_+)\hookrightarrow E_+\setminus\{0\}$. Summing up, we have an $S^1$-equivariant homotopy equivalence
$f:\Lambda\to S(E_+)$,
which induces an isomorphism 
\[f^*:H^*_{S^1}(S(E_+);\Z)\toup^{\cong}H^*_{S^1}(\Lambda).\]
The infinite dimensional sphere $S(E_+)$ is contractible, and therefore the Gysin sequence
\begin{align*}
  H^{d+1}(S(E_+);\Z) \toup H^d_{S^1}(S(E_+);\Z) \ttoup^{\smallsmile e} H^{d+2}_{S^1}(S(E_+);\Z) \toup H^{d+2}(S(E_+);\Z) 
\end{align*}
implies $H^*_{S^1}(S(E_+);\Z)=\Z[e]$. Here $e\in H^2_{S^1}(S(E_+);\Z)$ is the Euler class of the principal $S^1$-bundle $S(E_+)\times ES^1\to S(E_+)\times_{S^1}ES^1$. Finally, by the naturality of the Euler class, $f^*(e)$ is the Euler class of $\Lambda\times ES^1\to\Lambda\times_{S^1}ES^1$.
\end{proof}

\subsection{Torsion of the cohomology of iterated $S^1$-spaces}
In the proof of the perfectness of $\Psi$, surprisingly a crucial role is played by the $p$-torsion of the $S^1$-equivariant cohomology of the sublevel sets of $\Psi$ for a large enough prime $p$. Such a role is governed by the following proposition, which was proved by Radeschi-Wilking. Even though in the original source \cite[Prop.~5.3, 5.8]{Radeschi:2017dz} the statement is  phrased for the manifolds of closed geodesics of a Besse Riemannian manifold, the proof goes through in a general abstract setting, and we provide full details for the reader's convenience. In the statement and later on, we will adopt the following notation: if $X$ is a space equipped with an $S^1$-action $(t,x)\mapsto t\cdot x$, for each $q\in \N$ we denote by $X^q$ the same space equipped with the $S^1$-action $(t,x)\mapsto qt\cdot x$. Notice that the $S^1$-equivariant rational cohomologies of $X$ and $X^q$ are isomorphic, for
\[
 H^*_{S^1}(X;\Q)
 \cong
 H^*(X/S^1;\Q)
 \cong
 H^*(X^q/S^1;\Q)
 \cong
 H^*_{S^1}(X^q;\Q).
\]
However, with integer coefficients, the $S^1$-equivariant cohomologies can be different.

\begin{prop}[Radeschi-Wilking \cite{Radeschi:2017dz}]
\label{p:torsion}
Let $p$ be a prime number not dividing the order of any torsion element of $H^*_{S^1}(X;\Z)$. Then:
\begin{itemize}
\item[$\mathrm{(i)}$] If $H^{d}_{S^1}(X^q;\Z)$ has non-trivial $p$-torsion for some $d$ and $q$, then $q/p\in\N$ and $H^{d-2m}_{S^1}(X;\Q)\neq0$ for some  $m\geq0$.

\item[$\mathrm{(ii)}$] If $H^{d}_{S^1}(X;\Q)\neq 0$ and $H^{d+2}_{S^1}(X;\Q)=0$ for some $d\geq0$, then $H^{d+2m}_{S^1}(X^p;\Z)$ has non-trivial $p$-torsion for every $m>0$.
\end{itemize}
\end{prop}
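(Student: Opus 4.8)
The natural tool is the Gysin sequence relating the $S^1$-equivariant cohomology of $X^q$ to the ordinary cohomology of the quotient, via the $q$-fold covering structure on the $S^1$-orbits. Concretely, for each $q$ there is a long exact sequence
\begin{align*}
 \cdots \toup H^{d}_{S^1}(X^q;\Z) \ttoup^{\smile e_q} H^{d+2}_{S^1}(X^q;\Z) \toup H^{d+2}(X^q/S^1;\Z) \toup H^{d+1}_{S^1}(X^q;\Z)\toup\cdots,
\end{align*}
where $e_q\in H^2_{S^1}(X^q;\Z)$ is the Euler class of the corresponding $S^1$-bundle. The key algebraic observation is that $X/S^1 = X^q/S^1$ as spaces, so the ordinary cohomology appearing in these Gysin sequences is the same for every $q$; only the Euler class (and hence the module structure over $H^*(BS^1;\Z)=\Z[e]$) changes, and one has $e_q = q\cdot e_1$ up to the identification $H^2_{S^1}(X^q;\Z)\cong H^2_{S^1}(X;\Z)\otimes(\text{twist})$. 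Thus the plan is: first, use the hypothesis on $p$ to control the torsion of $H^*_{S^1}(X;\Z)$ and deduce that $H^*_{S^1}(X;\Z_{(p)})$ is a free $\Z_{(p)}[e]$-module in each degree modulo understood pieces (equivalently, the $e$-multiplication is injective/surjective in the relevant ranges after localizing at $p$); second, feed this into the Gysin sequence for $X^q$ to see exactly when $p$-torsion can appear.

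For part~(i): suppose $H^d_{S^1}(X^q;\Z)$ has $p$-torsion. Localize everything at $p$. Since $p$ does not divide the order of any torsion element of $H^*_{S^1}(X;\Z)$, the module $H^*_{S^1}(X;\Z_{(p)})$ has no torsion, so by the Gysin sequence for $q=1$ the class $e=e_1$ acts injectively above some degree and $H^*(X/S^1;\Z_{(p)})\cong H^*_{S^1}(X;\Z_{(p)})/(e)$ with a clean description. Now compare with the Gysin sequence for general $q$: the connecting maps and the ordinary cohomology are unchanged, but multiplication by $e_q$ differs from multiplication by $e_1$ by a factor of $q$. The $p$-torsion in $H^d_{S^1}(X^q;\Z_{(p)})$ therefore forces the factor $q$ to be non-invertible in $\Z_{(p)}$, i.e.\ $p\mid q$; and the torsion subgroup is a subquotient of (a shift of) $H^*(X/S^1;\Z_{(p)})\cong \bigoplus_{m\ge 0} H^{d-2m}_{S^1}(X;\Q)\text{-pieces}$, which gives $H^{d-2m}_{S^1}(X;\Q)\neq 0$ for some $m\ge 0$. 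One must be slightly careful to track which degree shift occurs — the torsion in degree $d$ comes from the image of the connecting homomorphism $H^{d+1}(X/S^1)\to H^d_{S^1}(X^q)$, or equivalently from the cokernel of $\smile e_q$ in degree $d-2$; chasing this pins down the $-2m$.

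For part~(ii): assume $H^d_{S^1}(X;\Q)\neq 0$ but $H^{d+2}_{S^1}(X;\Q)=0$. Rationally this says $\smile e$ fails to be surjective onto (in fact hits zero in) degree $d+2$, so the Gysin sequence produces a nonzero class in $H^{d+2}(X/S^1;\Q)$ detecting a $\Q$-vector space of dimension equal to $\dim_\Q H^d_{S^1}(X;\Q)>0$. Integrally, over $\Z_{(p)}$, the torsion-freeness of $H^*_{S^1}(X;\Z_{(p)})$ means this class lifts to a genuine free summand of $H^{d+2}(X/S^1;\Z_{(p)})$ mapping isomorphically from $\mathrm{coker}(\smile e_1)$ in the appropriate degree. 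Now run the Gysin sequence for $X^p$: multiplication by $e_p = p\cdot e_1$ in the relevant degree is $p$ times an isomorphism-on-a-summand, hence has cokernel containing $\Z_{(p)}/p\Z_{(p)}$, which injects into $H^{d+2}_{S^1}(X^p;\Z_{(p)})$ — producing the required $p$-torsion. Iterating up the $e_p$-tower (using that $e_p$ acts as $p$ times an injective-modulo-torsion map in each subsequent even degree once we are above $d$) yields $p$-torsion in $H^{d+2m}_{S^1}(X^p;\Z)$ for every $m>0$. The main obstacle I anticipate is the bookkeeping: making the comparison ``$e_q = q e_1$'' precise at the level of integral cochains (it is really a statement about the classifying map $BS^1\to BS^1$ of degree $q$, i.e.\ the map $z\mapsto z^q$ inducing multiplication by $q$ on $H^2(BS^1;\Z)=\Z$), and then carefully threading the localized Gysin long exact sequences so that the degree shifts $2m$ and the appearance of $p$ (not $p^2$, not a unit) come out exactly right. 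Everything else is a routine diagram chase once the torsion-freeness of $H^*_{S^1}(X;\Z_{(p)})$ is in hand.
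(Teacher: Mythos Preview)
Your intuition is right but there is a genuine gap in the execution. The Gysin sequence you write,
\[
\cdots \to H^{d}_{S^1}(X^q;\Z) \to H^{d+2}_{S^1}(X^q;\Z) \to H^{d+2}(X^q/S^1;\Z) \to \cdots,
\]
does not exist in this form over $\Z$: the Gysin sequence of the free $S^1$-bundle $X\times ES^1 \to X^q\times_{S^1}ES^1$ has $H^*(X;\Z)$ as its non-equivariant term, not $H^*(X^q/S^1;\Z)$, and there is no clean integral Gysin sequence with the orbit space when the action on $X^q$ is only locally free. More seriously, the identification ``$e_q = q\,e_1$'' is not well-posed: the classes $e_q\in H^2_{S^1}(X^q;\Z)$ and $e_1\in H^2_{S^1}(X;\Z)$ live in different groups with no canonical map between them --- indeed the whole content of the proposition is that these groups can have different torsion, so they cannot be naively identified.

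The paper resolves this via an auxiliary construction you are missing. One introduces the principal $S^1$-bundle
\[
\pi\colon Y:=\frac{(X\times ES^1)^q\times ES^1}{S^1}\ \longrightarrow\ Z:=(X\times_{S^1}ES^1)\times BS^1,
\]
whose total space $Y$ is homotopy equivalent to $X^q\times_{S^1}ES^1$ while the base $Z$ is \emph{independent of $q$}, with $H^*(Z;R)\cong H^*_{S^1}(X;R)\otimes R[e_2]$ by K\"unneth over the localization $R=\Z_{(p)}$. A direct Chern-class computation (Lemma~\ref{l:Euler}) gives the Euler class of $\pi$ as $e=e_1\otimes 1 - q\cdot 1\otimes e_2$: this is the rigorous incarnation of your heuristic ``the Euler class changes by a factor of~$q$'', but note that the $q$ sits on the \emph{extra} $BS^1$-factor, not as a scalar multiple of $e_1$. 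The Gysin sequence of $\pi$ then relates $H^*_{S^1}(X^q;R)$ to the torsion-free module $H^*(Z;R)$, and both (i) and (ii) follow by explicitly solving the resulting linear system in the $e_2$-grading. Without this extra $BS^1$ factor I do not see how to make your comparison of Euler classes precise, and the diagram chase you sketch does not go through as stated.
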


\begin{proof}
We fix, once for all, a prime number $p$ not dividing the order of any torsion element in $H^*_{S^1}(X;\Z)$.
Since we are only interested in the $p$-torsion, we shall consider  singular cohomology groups with coefficients in the ring
\begin{align*}
R:=\big\{\tfrac ab  \big|\ a,b\in\Z,\ \tfrac bp\not\in\Z\big\}\subset\Q.
\end{align*}
Notice that $H^*_{S^1}(X;R)$ is torsion-free.

We employ a trick that will allow us to relate the $S^1$-equivariant cohomology of $X^q$ with the one of $X$ by means of a suitable Gysin sequence. We consider the space
\[ Y:=\frac{(X\times ES^1)^q \times ES^1}{S^1},\]
where $S^1$ is understood to act on $(X\times ES^1)^q \times ES^1$ by 
\[t\cdot(x,v_1,v_2)=(qt\cdot x,qt\cdot v_1,t\cdot v_2).\]
The $S^1$-equivariant projection map $Y\to X^q\times_{S^1} ES^1$, $[x,v_1,v_2]\mapsto[x,v_2]$ is a fibration with contractible fibers homeomorphic to $ES^1$. In particular, it is an $S^1$-equivariant homotopy equivalence, and induces an isomorphism
\begin{align}
\label{e:iso_X^q_Y}
 H_{S^1}^*(X^q;R)\cong H^*(Y;R).
\end{align}
We now consider the space
\begin{align*}
 Z:=\frac{(X\times ES^1)^q \times ES^1}{S^1\times S^1}=\frac{X\times ES^1}{S^1}\times\frac{ES^1}{S^1}=(X\times_{S^1} ES^1)\times BS^1,
\end{align*}
where $S^1\times S^1$ here acts on $(X\times ES^1)^q\times ES^1$ by
\[(t_1,t_2)\cdot(x,v_1,v_2)=(qt_1\cdot x,qt_1\cdot v_1,t_2\cdot v_2).\]
Notice that $Z$ is independent of $q$. If we equip $Y$ with the $S^1$ action
\begin{align*}
 t\cdot[x,v_1,v_2]=[qt\cdot x,qt\cdot v_1,v_2]=[x,v_1,-t\cdot v_2],
 \qquad\forall t\in S^1,\ [x,v_1,v_2]\in Y,
\end{align*}
the projection map $\pi:Y\to Z$ is a principal $S^1$ bundle. By Lemma~\ref{l:Euler}, its Euler class $e\in H^2(Z;R)$ is given by
\begin{align*}
e = e_1\otimes 1 - q\cdot\,1\otimes e_2,
\end{align*}
where $e_1\in H^2_{S^1}(X;R)$ and $e_2\in H^2(BS^1;R)$ are the Euler classes of $X\times ES^1\to X\times_{S^1} ES^1$ and $ES^1\to BS^1$ respectively.
By~\eqref{e:iso_X^q_Y}, the Gysin long exact sequence of $\pi:Y\to Z$ reads
\begin{align*}
 \ldots
 \ttoup^{\pi_*}
 H^{*-2}(Z;R)
 \ttoup^{e\,\smallsmile} 
 H^*(Z;R)
 \ttoup^{\pi^*}
 H^*_{S^1}(X^q;R)
 \ttoup^{\pi_*}
 H^{*-1}(Z;R)
 \ttoup^{e\,\smallsmile}\ldots
\end{align*}
We recall that $H^*_{S^1}(BS^1;R)=R[e_2]$, and in particular it vanishes in odd degrees and is isomorphic to $R$ in every even degree. By the K\"unneth formula, we have
\begin{align*}
 H^*(Z;R) \cong H^*_{S^1}(X;R)\otimes H^*(BS^1;R),
\end{align*}
that is, every cohomology class in $H^d(Z;R)$ is a linear combination of terms of the form $k\otimes e_2^i$, where $0\leq i\leq\lfloor d/2\rfloor$ and $k\in H^{d-2i}_{S^1}(X;R)$.

We can now prove point (i). Let us assume that there exists a non-zero cohomology class $k\in H^{d}_{S^1}(X^q;R)$ such that $pk=0$. Since $H^*(Z;R)$ is torsion-free, we must have $\pi_*k=0$. The above Gysin sequence implies that $k=\pi^* k'$ for some non-zero $k'\in H^{d}(Z;R)$. Since $H^*(Z;R)$ is torsion-free, this proves that the rational cohomology $H^{d}(Z;\Q)$ is non-trivial, and therefore that $H^{d-2m}_{S^1}(X;\Q)$ is non-trivial as well for some $m\geq0$. It remains to show that $p$ divides $q$. 

Let us assume by contradiction that $q/p\not\in\N$, so that we can always divide by $q$ in the ring $R$. Since $\pi^* pk'=0$, the above Gysin sequence implies that $pk'=e\smallsmile k''$ for some $k''\in H^{d-2}(Z;R)$. The cohomology classes $k'$ and $k''$ can be uniquely written as  
\[
k'=\sum_{i=0}^{\lfloor d/2\rfloor} k_i'\otimes e_2^{i},
\qquad
k''=  \!\!\!\sum_{i=0}^{\lfloor(d-2)/2\rfloor}\!\!\! k_i''\otimes e_2^{i},
\] 
where $k_i'\in H^{d-2i}_{S^1}(X;R)$ and $k_i''\in H^{d-2-2i}_{S^1}(X;R)$. Let $r\leq \lfloor(d-2)/2\rfloor$ be the largest integer such that $k_r''\neq0$.
The identity $pk'=e\smallsmile k''$ can be rewritten as 
\begin{align*}
p k_{r+1}' & = - q\,k_{r}'',\\ 
p k_{r}' & = e_1\smallsmile k_{r}''- q\,k_{r-1}'',\\ 
\vdots &\\
p k_{1}' & = e_1\smallsmile k_{1}'' - q\,k_{0}'',\\ 
p k_{0}' & = e_1\smallsmile k_{0}''. 
\end{align*}
This readily implies that $k_r''=-p\,q^{-1}k_{r+1}'$. Let us now prove by induction that every $k_i''$ is divisible by $p$: if this holds for $k_r'', k_{r-1}'',\ldots,k_{i+1}''$, the identity  \[k_i''=-p\, q^{-1} k_{i+1}' + q^{-1} e_1\smallsmile k_{i+1}''\] implies that it holds for $k_i''$ as well. Therefore  $k''=pk'''$ for some $k'''\in H_{S^1}^{d-2}(Z;R)$, and since $pk'=e\smallsmile pk'''=p(e\smallsmile k''')$, we have $k'=e\smallsmile k'''$. However, the above Gysin sequence gives the contradiction
\begin{align*}
0\neq k = \pi^*k'= \pi^*(e\smallsmile k''') = 0.
\end{align*}

We now set $q=p$, and assume that $H^{d}_{S^1}(X;\Q)\neq 0$ and $H^{d+2}_{S^1}(X;\Q)=0$. By our choice of $p$, the same holds if we employ the singular cohomology with coefficients in $R$. Therefore, we can find a non-zero $w\in H^{d}_{S^1}(X;R)$ of infinite order, not divisible by $p$, and such that $e_1\smallsmile w=0$. For all integers $m\geq 1$, we have
\begin{align*}
 e\smallsmile(w\otimes e_2^{m-1}) = -p\,w\otimes e_2^m \neq 0.
\end{align*}
We claim that $w\otimes e_2^m$ is not in the image of the map $y\mapsto e\smallsmile y$ that appears in the above Gysin sequence. Indeed, if \[y=y_0\otimes 1+y_1\otimes e_2+y_2\otimes e_2^2+\ldots+y_h\otimes e_2^h\] with $y_h\neq0$, the cohomology class $e\smallsmile y$ is the sum of $-p y_h\otimes e_2^{h+1}$ and of other terms of the form $z_i\otimes e_2^i$ with $i\leq h$. If $w\otimes e_2^m=e\smallsmile y$, then $h+1= m$ and $w=p\, y_{m-1}$, contradicting the fact that $w$ is not divisible by $p$. The above Gysin sequence implies that 
\[
\pi^*(w\otimes e_2^m)\neq 0,\qquad p\,\pi^*(w\otimes e_2^m)
=
\pi^*(p\, w\otimes e_2^m)
=
-\pi^*(e\smallsmile(w\otimes e_2^{m-1}))
=
0.
\] 
In particular $H^{d+2m}_{S^1}(X^p;R)$ has $p$-torsion for all $m\geq1$, and so does $H^{d+2m}_{S^1}(X^p;\Z)$.
\end{proof}

\subsection{Cohomology of the critical sets}

For each $\dot\gamma\in\Lambda$ and $m\in\N$, we denote by $\dot\gamma^m\in\Lambda$ the $m$-th iterate of $\dot\gamma$, which is defined by \[\dot\gamma^m(t)=\dot\gamma(mt).\] Notice that, according to the Clarke variational principle, $\dot\gamma\in\crit(\Psi)$ if and only if $\dot\gamma^m\in\crit(\Psi)$ for all $m\geq1$. 

We assume that our convex contact sphere $Y\subset\R^{2n}$ is Besse. Let $\tau>0$ be the minimal common period of its closed Reeb orbits. There are finitely many integers $1=k_s<k_{s-1}<\ldots<k_1$ such that each quotient $\tau/k_h$ is the (not necessarily minimal) period of some Reeb orbit. We set 
\begin{align*}
 P_h:=\crit(\Psi)\cap\Psi^{-1}(\tau/k_h),\qquad h=1,\ldots,s.
\end{align*}
Theorem~\ref{t:main} will imply that each $P_h$ is a path-connected component of $\crit(\Psi)$, but for now we only know that it is a finite union of such path-connected components. If we denote by $\psi^t$ the Reeb flow of $Y$ and by $Y_{k_h}:=\fix(\psi^{\tau/k_h})$ the $k_h$-stratum of $Y$, there is an equivariant diffeomorphism
\begin{align}
\label{e:equiv_diff_crit_strat}
P_h\toup^{\cong} Y_{k_h},\qquad \dot\gamma\mapsto \tfrac{\tau}{k_h}\gamma(0).
\end{align}
Here, $\gamma$ is the unique primitive of $\dot\gamma$ such that $t\mapsto \tfrac{\tau}{k_h}\gamma\big(t\tfrac{k_h}{\tau}\big)$ is a closed Reeb orbit of $Y$ (see Theorem~\ref{t:Clarke}). The equivariance in~\eqref{e:equiv_diff_crit_strat} intertwines the $S^1$ action on $P_h$ and the $\R/(\tau/k_h)\Z$-action on $Y_{k_h}$. Notice that $P_s$ is diffeomorphic to the whole contact sphere $Y$, and in particular is path-connected.

We enumerate the critical values of $\Psi$ (that is, the elements of the action spectrum $\sigma(Y)$) in increasing order as
$\sigma_1<\sigma_2<\sigma_3<\ldots$,
and denote by 
\[K_j:=\crit(\Psi)\cap\Psi^{-1}(\sigma_j)\] 
the corresponding critical set. Notice that $K_j=P_j$ for all $j=1,\ldots,s$, and more generally each $K_j$ is of the form
\begin{align*}
 K_j 
 = 
 P_{h_j}^{m_j} 
 = 
 \big\{
 \dot\gamma^{m_j}\ \big|\ \dot\gamma\in P_{h_j}
 \big\}
\end{align*}
for some $h_j\in\{1,\ldots,s\}$ and $m_j>0$. Since the $S^1$ action on the $K_j$'s is locally free, their rational $S^1$-equivariant cohomology is nothing but the rational cohomology of their quotient by $S^1$. This implies
\begin{align}
\label{e:same_rational_cohomology}
 H^*_{S^1}(K_j;\Q) 
 \cong 
 H^*(K_j/S^1;\Q)
 \cong 
 H^*(P_{h_j}/S^1;\Q)
 \cong
  H^*_{S^1}(P_{h_j};\Q).
\end{align}
We denote by $\pi_0(K_j)$ the family of path-connected components $K\subset K_j$, and we set
\begin{align*}
 \iota_0(j) & :=\min\big\{ \ind(K)\ \big|\ K\in\pi_0(K_j) \big\},\\
 \iota_1(j) & :=\max\big\{ \ind(K)+\nul(K)-1\ \big|\ K\in\pi_0(K_j) \big\}.
\end{align*}
We recall that, by Lemma~\ref{l:GGM}, the indices $\iota_0(j)$ and $\iota_1(j)$ are all even. We also recall that $\nul(K)=\dim(K)$ for each $K\in\pi_0(K_j)$, since the Clarke action functional $\Psi$ is Morse-Bott. 

We choose a sequence of positive real numbers $b_j$, for $j\geq0$, such that
\begin{align*}
0<b_0 < \sigma_1 < b_1 < \sigma_2 < b_2 < \sigma_3 < b_3 < \ldots,
\end{align*}
and we set 
\[\Lambda_j:=\Lambda^{<b_j}=\Psi^{-1}(0,b_j).\]
Lemma~\ref{l:local_cohomology}(i) implies that the $S^1$-equivariant local cohomology of the critical sets $K_j$ is given by
\begin{align*}
H^*_{S^1}(\Lambda_j,\Lambda_{j-1};\Z)\cong \bigoplus_{K\in\pi_0(K_j)} \!\!\! H^{*-\ind(K)}_{S^1}(K;\Z) .
\end{align*}

Theorem~\ref{t:perfect} will be a consequence of the next proposition. In the case of Besse Riemannian geodesic flows, an analogous statement was established in \cite[Prop.~5.4]{Radeschi:2017dz}. In the following, we shall denote by $H^{\odd}_{S^1}$ the cohomology in odd degrees, i.e.
\begin{align*}
 H^{\odd}_{S^1}(\cdot):=\bigoplus_{d\geq0} H^{2d+1}_{S^1}(\cdot).
\end{align*}

\begin{prop}\label{p:zero_odd_cohomology}
For each $j\geq 1$ we have $H^{\odd}_{S^1}(K_j;\Q)=0$.
\end{prop}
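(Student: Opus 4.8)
The plan is to induct on $j$, using the Morse-theoretic exact sequences for the Clarke action functional together with the torsion-transfer principle of Proposition~\ref{p:torsion}. The base case is clear: $K_1 = P_1$ is the stratum $Y_{k_1}$ (with trivial iteration), so it is a closed manifold; more to the point, since $\sigma_1$ is the minimal critical value, $\Lambda_1 = \Lambda^{<b_1}$ deformation retracts onto $K_1$ in an $S^1$-equivariant way, and $\Lambda$ itself has $H^*_{S^1}(\Lambda;\Z) = \Z[e]$ by Lemma~\ref{l:domain_Clarke}; combined with the long exact sequence of $(\Lambda,\Lambda_1)$ and Lemma~\ref{l:local_cohomology}, one extracts that $H^{\odd}_{S^1}(K_1;\Q) = 0$ (indeed $K_1$ has the rational cohomology of a point up through the relevant range, or is a point). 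For the inductive step I would assume $H^{\odd}_{S^1}(K_\ell;\Q) = 0$ for all $\ell < j$ and deduce it for $K_j$.

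The key observation is that $K_j = P_{h_j}^{m_j}$ is a $p$-fold (more precisely $m_j$-fold) reparametrized copy of a \emph{lower} critical set: writing $m_j = p \cdot m'$ for a prime $p$ dividing $m_j$, we have $K_j \cong (P_{h_j}^{m'})^p$, and $P_{h_j}^{m'}$ occurs as some earlier $K_\ell$ with $\ell < j$ (the orbit $\dot\gamma^{m'}$ has strictly smaller period than $\dot\gamma^{m_j}$). So the contrapositive of Proposition~\ref{p:torsion}(i) applies: if $p$ is chosen not to divide the order of any torsion element of $H^*_{S^1}(K_\ell;\Z)$, then $H^*_{S^1}(K_j;\Z) = H^*_{S^1}((K_\ell)^p;\Z)$ having nontrivial $p$-torsion in degree $d$ forces $H^{d-2m}_{S^1}(K_\ell;\Q) \neq 0$ for some $m \geq 0$. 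Since $H^*_{S^1}(K_j;\Q) \cong H^*_{S^1}(K_\ell;\Q)$ by~\eqref{e:same_rational_cohomology}, the inductive hypothesis $H^{\odd}_{S^1}(K_\ell;\Q) = 0$ tells us that the $\Q$-cohomology of $K_j$ vanishes in odd degree. The remaining task is to rule out the scenario where the \emph{rational} cohomology is even-concentrated but the Morse-theoretic assembly of the sublevel sets is obstructed, i.e.\ where the exact sequence for $(\Lambda_j, \Lambda_{j-1})$ fails to split; and this is exactly where Proposition~\ref{p:torsion}(ii) is deployed to produce a contradiction. Concretely, if the perfectness statement of the inductive step failed, one would find a degree $d$ with $H^d_{S^1}(\Lambda_j;\Q) \neq 0 = H^{d+2}_{S^1}(\Lambda_j;\Q)$; applying part (ii) with a suitable prime $p$ (again avoiding all torsion orders encountered so far) yields $p$-torsion in $H^{d+2m}_{S^1}(\Lambda_j^p;\Z)$ for all $m > 0$, which via Lemma~\ref{l:local_cohomology} and Lemma~\ref{l:GGM} (even Morse indices!) must be localized in some critical set $K_\ell^p$ with $\ell < j$, contradicting the assumption that $p$ was chosen larger than all relevant torsion orders.

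The main obstacle I expect is bookkeeping the interplay between the three cohomology theories in play — rational $S^1$-equivariant, integral $S^1$-equivariant, and $R$-coefficient for $R = \{a/b : p \nmid b\}$ — and in particular verifying that a single prime $p$ can be chosen to work simultaneously: it must avoid the orders of all torsion elements of $H^*_{S^1}(K_\ell;\Z)$ and $H^*_{S^1}(\Lambda_\ell;\Z)$ for $\ell < j$, which is possible only because each of these is a finitely generated (in each degree) graded group, or because one works degree by degree in a bounded range. One must also carefully use the parity constraint from Lemma~\ref{l:GGM} — all Morse indices $\ind(K)$ and all nullities-parity are such that the shifts $H^{*-\ind(K)}_{S^1}(K;\Q)$ keep everything in even degree — to propagate the even-concentration from the $K_\ell$'s up to $\Lambda_j$. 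Once both directions of Proposition~\ref{p:torsion} are wired in, the induction closes and $H^{\odd}_{S^1}(K_j;\Q) = 0$ follows for all $j \geq 1$.
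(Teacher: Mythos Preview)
Your induction never reaches the primitive critical manifolds $P_2,\ldots,P_s$. The decomposition $K_j=P_{h_j}^{m_j}$ with $m_j=p\cdot m'$ presupposes $m_j>1$, but for $j\leq s$ one has $m_j=1$ and $K_j=P_j$; these cannot be written as an iterate of anything lower. In fact, once you invoke~\eqref{e:same_rational_cohomology} you see that the entire content of the proposition is precisely the statement $H^{\odd}_{S^1}(P_h;\Q)=0$ for $h=1,\ldots,s$: the case $m_j>1$ is then automatic, with no need for Proposition~\ref{p:torsion}(i) at all. Your base-case argument for $P_1$ is also circular: the long exact sequence of $(\Lambda,\Lambda_1)$ only yields $H^{\odd}_{S^1}(K_1;\Q)=0$ if you already control $H^{\even}_{S^1}(\Lambda,\Lambda_1;\Q)$, and by Lemma~\ref{l:local_cohomology} that requires information about the $K_j$ for $j\geq 2$.

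Your deployment of Proposition~\ref{p:torsion}(ii) is also misdirected. Applying it to $X=\Lambda_j$ produces $p$-torsion in $H^*_{S^1}(\Lambda_j^p;\Z)$, but $\Lambda_j^p$ (the sublevel set with rescaled $S^1$-action) is not a sublevel set of $\Psi$ and has no Morse-theoretic description in terms of the $K_\ell$; even granting one, $p$-torsion in $H^*_{S^1}(K_\ell^p;\Z)$ does \emph{not} contradict the choice of $p$ avoiding torsion orders of $H^*_{S^1}(K_\ell;\Z)$ --- indeed part~(ii) says such torsion is \emph{expected}. The paper proceeds quite differently: it argues by contradiction, taking a minimal $k$ with $H^{\odd}_{S^1}(P_k;\Q)\neq 0$, iterating $P_k$ by a large prime $p$ to obtain $K_r=P_k^p$, and then exploiting the nearby critical sets $K_{r_1},K_{r_2}\cong Y=S^{2n-1}$ sitting at multiples of the common period $\tau$; sharp Morse index inequalities coming from Lemma~\ref{l:Morse} (the inequalities labelled \textbf{(a)}--\textbf{(c)}) then allow the $p$-torsion produced in $H^*_{S^1}(K_r;\Z)$ by Proposition~\ref{p:torsion}(ii) to survive all the way to $H^{\odd}_{S^1}(\Lambda;\Z)$, contradicting Lemma~\ref{l:domain_Clarke}. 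The essential idea you are missing is that one iterates \emph{upward} by a large prime and uses the global topology of $\Lambda$, rather than trying to descend to lower critical sets.
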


\begin{proof}
By~\eqref{e:same_rational_cohomology}, it is enough to show that 
$H^{\odd}_{S^1}(P_{h};\Q)=0$ for all $h=1,\ldots,s$.
We will prove the proposition by contradiction: let us assume that  
\begin{align}
\label{e:odd_cohomology} 
H^{\odd}_{S^1}(P_{k};\Q)\neq0
\end{align}
for some $k\in\{1,\ldots,s\}$.
We fix the minimum such $k$, so that if $k>1$ we have 
\begin{align*}
H^{\odd}_{S^1}(P_{1};\Q)\cong\ldots\cong H^{\odd}_{S^1}(P_{k-1};\Q)=0.
\end{align*}

At every degree $d$, the $S^1$-equivariant cohomology groups $H^d_{S^1}(P_{h};\Z)$ are finitely generated. If $d$ is larger than the maximal dimension of $P_h$, the ordinary cohomology groups $H^d(P_{h};\Z)$ vanish, and the cup product with the Euler class $e$ in the Gysin sequence
\begin{equation*}
\begin{tikzcd}[row sep=small]
 H^{d}(P_{h};\Z) \arrow[r] \equaldown
 &
 H^{d-1}_{S^1}(P_{h};\Z) \arrow[r, "\smallsmile e", "\cong"']
 &
 H^{d+1}_{S^1}(P_{h};\Z) \arrow[r]
 &
 H^{d+1}(P_{h};\Z) \equaldown\\
 0 & & & 0
\end{tikzcd}
\end{equation*}
is an isomorphism. Therefore, $H^*_{S^1}(P_{h};\Z)$ is finitely generated as a ring, and in particular any prime number larger than some $p_0$ does not divide the order of any torsion element of $H^{*}_{S^1}(P_{h};\Z)$ for all $h=1,\ldots,s$. 

Let us consider the critical manifold $P_k$ satisfying~\eqref{e:odd_cohomology}. We fix a compact neighborhood $[c-\delta,c+\delta]\subset(0,\infty)$ of its critical value $c:=\sigma_k=\Psi(P_k)$ that does not contain other critical values of $\Psi$. Notice that $P_k^p\subseteq K_{r}$ for the integer $r$ such that \[\sigma_r=pc,\] and the inclusion is an actual equality if $p$ is a large enough prime. We fix the prime number $p>p_0$ to be large enough so that $P_k^p=K_r$ and 
\begin{align*}
 p(c-\delta)\leq \sigma_{r_1} < \sigma_r < \sigma_{r_2}\leq p(c+\delta),
\end{align*}
for some critical values $\sigma_{r_1}$ and $\sigma_{r_2}$ that are multiples of the common period $\tau$.

The proof of the proposition will be based on a rather subtle analysis of the $p$-torsion of the cohomology groups $H^*_{S^1}(\Lambda_j;\Z)$, which will produce some non-trivial cohomology $H_{S^1}^{d}(\Lambda;\Z)$ in degree $d=\iota_1(r_2)+1$; since $d$ is odd, this will contradict Lemma~\ref{l:domain_Clarke}. It will therefore be convenient to discard all the torsion of order that is not divisible by $p$. We will do it by considering the coefficient ring
\begin{align*}
R:=\big\{\tfrac ab  \big|\ a,b\in\Z,\ \tfrac bp\not\in\Z\big\}\subset\Q.
\end{align*}
Notice that, since $p>p_0$, each $H^{*}_{S^1}(P_{h};R)$  is torsion-free, and in particular vanishes in degrees larger than or equal to the maximal dimension of $P_h$. From now on, all the cohomology groups will have coefficients in $R$ unless we specify otherwise, and we will remove $R$ from the notation.

Since $\sigma_{r_1}$ and $\sigma_{r_2}$ are multiples of the common period $\tau$, the associated critical manifolds $K_{r_1}$ and $K_{r_2}$ are both diffeomorphic to $Y$, and $r_1<r<r_2$. In particular, $K_{r_1}$ and $K_{r_2}$ are path-connected, and we have
\[
\iota_1(r_1)-\iota_0(r_1)
=
\iota_1(r_2)-\iota_0(r_2)
=
\dim(K_{r_1})-1
=
\dim(K_{r_2})-1
=
2n-2.
\] 
The Morse index formulas of Lemma~\ref{l:Morse} imply: 
\begin{itemize}
\item[\textbf{(a)}] If $j<r_1$ then $\iota_1(j) + 2 \leq \iota_0(r_1)$,\vspace{2pt}
\item[\textbf{(b)}] If $r_1<j<r_2$ then $\iota_1(r_1) + 2 = \iota_0(r_1) + 2n \leq \iota_0(j)$ and $\iota_1(j)+2\leq\iota_0(r_2)$,\vspace{2pt}
\item[\textbf{(c)}] If $j>r_2$ then $\iota_1(r_2) + 2 = \iota_0(r_2) + 2n \leq \iota_0(j)$.
\end{itemize}
By Proposition~\ref{p:torsion}(i), every critical set $K_j=P_{h_j}^{m_j}$ with $j\in\{1,\ldots,r_2\}\setminus\{r\}$ satisfies one of the two following conditions:
\begin{itemize}
\item ${m_j}/p\in\N$, and therefore $h_j<k$, $H^{\odd}_{S^1}(K_j;\Q)\cong H^{\odd}_{S^1}(P_{h_j};\Q)=0$, and $H^{\odd}_{S^1}(K_j)$ is torsion-free;\vspace{2pt}

\item ${m_j}/p\not\in\N$, and therefore $H^*_{S^1}(K_j)$ is torsion-free.

\end{itemize}
In both cases, $H^{\odd}_{S^1}(K_j)$ is torsion-free. Since 
\begin{align*}
\rank_{R} H^{2d+1}_{S^1}(K_j)
=
\rank_{\Q} H^{2d+1}_{S^1}(K_j;\Q) 
=
\rank_{\Q} H^{2d+1}(K_j/S^1;\Q),
\end{align*}
and since all Morse indices are even, we infer
\begin{align*}
 H^{2d+1-\ind(K)}_{S^1}(K) = 0,
 \qquad\forall j\in\{1,\ldots,r_2\}\setminus\{r\},\ \ K\in\pi_0(K_j),\ \ 2d+1>\iota_1(j).
\end{align*}
This, together with Lemma~\ref{l:local_cohomology}(ii) and the inequality \textbf{(a)}, implies
\begin{align*}
 H^{2d+1}_{S^1}(\Lambda_{r_1-1}) = 0,\qquad \forall\, 2d+1\geq\iota_0(r_1)-1.
\end{align*}
As for the ordinary cohomology, the inequality \textbf{(a)} implies 
\begin{align*}
 H^{d-\ind(K)}(K)=0 \qquad\forall j\in\{1,\ldots,r_1-1\},\ \ K\in\pi_0(K_j),\ \ d\geq\iota_0(r_1),
\end{align*}
which, together with Lemma~\ref{l:local_cohomology}(ii), provides $H^{d}(\Lambda_{r_1-1})=0$ for all $d\geq \iota_0(r_1)$.
Therefore, in the Gysin sequence
\begin{align*}
 \ldots 
 \ttoup
 H^{*}_{S^1}(\Lambda_{r_1-1}) 
 \ttoup^{\smallsmile e}  
 H^{*+2}_{S^1}(\Lambda_{r_1-1}) 
 \ttoup
 H^{*+2}(\Lambda_{r_1-1}) 
 \ttoup
 \ldots 
\end{align*}
the cup product with the Euler class $e$ is a surjective homomorphism
\begin{align*}
H^{2d}_{S^1}(\Lambda_{r_1-1})\eepi^{\smallsmile e} H^{2d+2}_{S^1}(\Lambda_{r_1-1}),
\qquad\forall\, 2d\geq\iota_0(r_1)-2.
\end{align*}
Here and in the following, we denote by a two-head arrow $\epi$ a surjective homomorphism.

Since $K_{r_1}\cong S^{2n-1}$, the Gysin sequence 
\begin{align*}
 H^{*+1}(S^{2n-1};\Q) 
 \toup
 H^{*}_{S^1}(K_{r_1};\Q) 
 \ttoup^{\smallsmile e}  
 H^{*+2}_{S^1}(K_{r_1};\Q) 
 \toup
 H^{*+2}(S^{2n-1};\Q) 
\end{align*}
readily implies that $H^{*}_{S^1}(K_{r_1};\Q)\cong\Q[e]/(e^n)$, where the Euler class $e$ is the generator of $H^{2}_{S^1}(K_{r_1};\Q)$. Since $m_{r_1}/p\not\in\N$, Proposition~\ref{p:torsion}(i) allows us to draw the same conclusion with the coefficients in $R$, i.e.
\begin{align*}
 H^{*}_{S^1}(K_{r_1})\cong \frac {R[e]}{(e^n)},
\end{align*}
where $e$ is now the generator of $H^{2}_{S^1}(K_{r_1})$. Clearly, all the assertions of this paragraph hold for $K_{r_2}$ as well.

By Lemma~\ref{l:local_cohomology}(i) we have $H^{*}_{S^1}(\Lambda_{r_1},\Lambda_{r_1-1})\cong H_{S^1}^{*-\iota_0(r_1)}(K_{r_1})$, and in particular \[H^{\odd}_{S^1}(\Lambda_{r_1},\Lambda_{r_1-1})=0.\] For each $2d\geq\iota_0(r_1)-2$ the long exact sequence of the inclusion $\Lambda_{r_1-1}\subset\Lambda_{r_1}$ gives a commutative diagram
\begin{equation*}
\begin{tikzcd}[row sep=large]
 H_{S^1}^{2d-\iota_0(r_1)}(K_{r_1}) \arrow[r] \arrow[d, twoheadrightarrow, "\smallsmile e"]
 &
 H^{2d}_{S^1}(\Lambda_{r_1}) \arrow[r] \arrow[d, "\smallsmile e"]
 &
 H^{2d}_{S^1}(\Lambda_{r_1-1}) \arrow[r] \arrow[d, twoheadrightarrow, "\smallsmile e"]
 &
 0 \\
 H_{S^1}^{2d+2-\iota_0(r_1)}(K_{r_1}) \arrow[r] 
 &
 H^{2d+2}_{S^1}(\Lambda_{r_1}) \arrow[r]
 &
 H^{2d+2}_{S^1}(\Lambda_{r_1-1}) \arrow[r]
 &
 0
\end{tikzcd}
\end{equation*}
whose rows are exact, and whose first and third vertical homomorphisms are surjective. Simple diagram chasing allows us to conclude that the second vertical homomorphism is surjective as well, i.e.
\begin{align*}
H^{2d}_{S^1}(\Lambda_{r_1})\eepi^{\smallsmile e} H^{2d+2}_{S^1}(\Lambda_{r_1}),
\qquad\forall\, 2d\geq\iota_0(r_1)-2.
\end{align*}

For every $j>r_1$ and for every path-connected component $K\in\pi_0(K_j)$, the inequality~\textbf{(b)} implies $\iota_0(r_1)-1-\ind(K)<0$. By Lemma~\ref{l:local_cohomology}(ii), we infer
\begin{align*}
H^{\iota_0(r_1)-1}_{S^1}(\Lambda,\Lambda_{r_1})\cong \bigoplus_{\scriptsize
  \begin{array}{@{}c@{}}
    j>r_1 \vspace{1pt}\\ 
    K\in\pi_0(K_j) 
  \end{array}
} \!\!\!\!\!  H^{\iota_0(r_1)-1-\ind(K)}_{S^1}(K) =0,
\end{align*}
and thus we have a surjective homomorphism
\begin{align*}
 H^{\iota_0(r_1)-2}_{S^1}(\Lambda)
 \epi
 H^{\iota_0(r_1)-2}_{S^1}(\Lambda_{r_1}).
\end{align*}
By fitting this homomorphism into the commutative diagram
\[\begin{tikzcd}[row sep=large]
 H^{\iota_0(r_1)-2}_{S^1}(\Lambda) 
 \arrow[rr, "\smallsmile e^d", "\cong"'] 
 \arrow[d, twoheadrightarrow]
 &&
 H^{\iota_0(r_1)+2d-2}_{S^1}(\Lambda) 
 \arrow[d]
 \\
 H^{\iota_0(r_1)-2}_{S^1}(\Lambda_{r_1}) 
 \arrow[rr, twoheadrightarrow, "\smallsmile e^d"] 
 &&
 H^{\iota_0(r_1)+2d-2}_{S^1}(\Lambda_{r_1})
\end{tikzcd}\]
we infer that the right vertical homomorphism is surjective, i.e.
\begin{align}\label{e:step1}
 H^{2d}_{S^1}(\Lambda)\epi H^{2d}_{S^1}(\Lambda_{r_1}),
 \qquad\forall\, 2d\geq \iota_0(r_1).
\end{align}

Since $c$ is the only critical value of $\Psi$ in $[c-\delta,c+\delta]$, if $r_1<j<r_2$ and $j\neq r$ we must have ${m_j}/p\not\in\N$. Therefore $H^{*}_{S^1}(K_j)$ is torsion-free, and for each path-connected component $K\in\pi_0(K_j)$ the cohomology $H^{*}_{S^1}(K)$ must vanish in degrees larger than or equal to $\dim(K)$. In particular, the inequalities \textbf{(b)} and \textbf{(c)} imply
\begin{align*}
 H^{d-\ind(K)}_{S^1}(K)=0,\quad\forall j\in\{r_1+1,\ldots,r_2-1\}\setminus\{r\},\  K\in\pi_0(K_j),\ d\geq\iota_0(r_2)-1,
\end{align*}
and Lemma~\ref{l:local_cohomology}(ii) provides
\begin{align}
\label{e:zero_rel_cohom}
 H^{d}_{S^1}(\Lambda_{r-1},\Lambda_{r_1})\cong H^{d}_{S^1}(\Lambda_{r_2-1},\Lambda_{r})=0,\qquad\forall d\geq\iota_0(r_2)-1.
\end{align}
The vanishing of the first of these two cohomology groups implies that we have an isomorphism
\begin{align*}
 H^d(\Lambda_{r-1})\ttoup^{\cong} H^d(\Lambda_{r_1}),\qquad\forall d\geq\iota_0(r_2)-1,
\end{align*}
and together with~\eqref{e:step1}, this implies that we have a surjective homomorphism
\begin{align*}
 H^{2d}_{S^1}(\Lambda)\epi H^{2d}_{S^1}(\Lambda_{r-1}),
 \qquad\forall\, 2d\geq \iota_0(r_2).
\end{align*}
This homomorphism factorizes through  $H^{2d}_{S^1}(\Lambda_r)$, i.e.
\begin{equation*}
\begin{tikzcd}[row sep=large]
 H^{2d}_{S^1}(\Lambda) 
 \arrow[r, twoheadrightarrow] 
 \arrow[d]
 &
 H^{2d}_{S^1}(\Lambda_{r-1}) 
 \\
 H^{2d}_{S^1}(\Lambda_{r}) 
 \arrow[ru] 
\end{tikzcd}
\end{equation*}
Therefore, we have a surjective homomorphism
\begin{align*}
 H^{2d}_{S^1}(\Lambda_r)\epi H^{2d}_{S^1}(\Lambda_{r-1}),
 \qquad\forall\, 2d\geq \iota_0(r_2).
\end{align*}

We now consider the ``anomalous'' critical manifold $K_r$, which is of the form $K_r=P_k^p$. 
Since $H^{\odd}(P_k;\Q)\neq0$, Proposition~\ref{p:torsion}(ii) implies that, for some path-connected component $K'\in\pi_0(K_r)$,
\[H^{2d+1}_{S^1}(K')_{\tor}\neq0,\qquad \forall 2d+1\geq\dim(K').\] 
Here, the subscript ``tor'' denotes the torsion subgroup.
We fix the odd degree
\[d:=\iota_1(r_2)+1.\] 
The inequality \textbf{(b)} implies $d-\ind(K')\geq\dim(K')$, and thus \[H^{d-\ind(K')}_{S^1}(K')_{\tor}\neq0.\] Since
\[
H^{d}_{S^1}(\Lambda_r,\Lambda_{r-1}) 
\cong 
\bigoplus_{K\in\pi_0(K_r)} \!\!\!
H^{d-\ind(K)}_{S^1}(K)
\]
according to Lemma~\ref{l:local_cohomology}(i), the long exact sequence
\begin{equation*}
 \begin{tikzcd}[row sep=large]
 H^{d-1}_{S^1}(\Lambda_r) 
 \arrow[r, twoheadrightarrow] 
 &
 H^{d-1}_{S^1}(\Lambda_{r-1}) 
 \arrow[r]
 &
 \displaystyle\bigoplus_{K\in\pi_0(K_r)} \!\!\! H^{d-\ind(K)}_{S^1}(K) 
 \arrow[r, "a^*"]
 &
 H^{d}_{S^1}(\Lambda_{r}) 
\end{tikzcd}
\end{equation*}
readily implies that $a^*$ is injective, and in particular 
\[H^{d}_{S^1}(\Lambda_{r})_{\tor}\neq0.\] 
Notice that $H^{d}_{S^1}(\Lambda_{r_2-1})\cong H^{d}_{S^1}(\Lambda_{r})$ according to~\eqref{e:zero_rel_cohom}. Moreover, 
\begin{align*}
 H^d_{S^1}(\Lambda_{r_2},\Lambda_{r_2-1}) & \cong H^{d-\iota_0(r_2)}_{S^1}(K_{r_2}) = 0,\\ 
 H^{d+1}_{S^1}(\Lambda_{r_2},\Lambda_{r_2-1}) & \cong H^{d+1-\iota_0(r_2)}_{S^1}(K_{r_2}) = 0.
\end{align*}
Therefore $H^{d}_{S^1}(\Lambda_{r_2})\cong H^{d}_{S^1}(\Lambda_{r_2-1})$. Overall, we showed that 
\begin{align*}
H^{d}_{S^1}(\Lambda_{r_2})_{\tor}\neq0. 
\end{align*}

Let us finally consider the higher critical sets $K_j$, for $j>r_2$. Inequality \textbf{(c)} implies $\iota_0(j)>d$, and therefore 
\begin{align*}
H^{d}_{S^1}(\Lambda_j,\Lambda_{j-1}) &\cong \bigoplus_{K\in\pi_0(K_j)} \!\!\! H^{d-\ind(K)}_{S^1}(K)=0,\\
H^{d+1}_{S^1}(\Lambda_j,\Lambda_{j-1}) &\cong \bigoplus_{K\in\pi_0(K_j)} \!\!\!  H^{d+1-\ind(K)}_{S^1}(K)\cong
\underbrace{R\oplus\ldots\oplus R}_{\times q_j},
\end{align*}
where $q_j$ is the number of path-connected components $K\in\pi_0(K_j)$ such that $\ind(K)=d+1$. Notice in particular that $H^{d+1}_{S^1}(\Lambda_j,\Lambda_{j-1})$ is torsion-free. We claim that $H^{d}_{S^1}(\Lambda_{j})_{\tor}\neq0$ for all $j\geq r_2$. Indeed, we already know that this holds for $j=r_2$. Assume that it holds for degree $j$, and consider the long exact sequence
\begin{equation*}
 \begin{tikzcd}[row sep=small]
 H^{d}_{S^1}(\Lambda_{j+1},\Lambda_j) 
 \arrow[r] \equaldown
 &
 H^{d}_{S^1}(\Lambda_{j+1}) 
 \arrow[r]
 &
 H^{d}_{S^1}(\Lambda_{j}) 
 \arrow[r, "\delta"]
 &
 H^{d+1}_{S^1}(\Lambda_{j+1},\Lambda_j) \\
 0 &&&
\end{tikzcd}
\end{equation*}
Since $H^{d+1}_{S^1}(\Lambda_{j+1},\Lambda_j)$ is torsion-free, the torsion subgroup $H^{d}_{S^1}(\Lambda_{j})_{\tor}$ is in the kernel of the connecting homomorphism $\delta$, and therefore the inclusion induces an isomorphism
\begin{align}
\label{e:nontrivial_torsion}
  H^{d}_{S^1}(\Lambda_{j+1})_{\tor} \toup^{\cong}
  H^{d}_{S^1}(\Lambda_{j})_{\tor}\neq0.
\end{align}
Consider now the inverse limit of the groups $H^{d}_{S^1}(\Lambda_{j})$ as $j\to\infty$, and recall that the cohomology admits a surjective homomorphism
\begin{align}
\label{e:inverse_limit}
 H^*_{S^1}(\Lambda) \epi \varprojlim H^*_{S^1}(\Lambda_j),
\end{align}
see, e.g., \cite[Theorem 3F.8]{Hatcher:2002dt}. Equations~\eqref{e:nontrivial_torsion} and~\eqref{e:inverse_limit} imply that $H^d_{S^1}(\Lambda)$ is non-trivial in the odd degree $d$, contradicting Lemma~\ref{l:domain_Clarke}.
\end{proof}

\subsection{Proof of the main theorems}
By Lemma~\ref{l:local_cohomology}(i), we have isomorphisms of $H^*(BS^1;\Q)$-modules
\begin{align}
\label{e:iso_modules_final_proof}
H^*_{S^1}(\Lambda_j,\Lambda_{j-1};\Q)\cong \bigoplus_{K\in\pi_0(K_j)} \!\!\! H^{*-\ind(K)}_{S^1}(K;\Q).
\end{align}
Since every Morse index $\ind(K)$ is even, Equation~\eqref{e:iso_modules_final_proof} and Proposition~\ref{p:zero_odd_cohomology} imply that 
\begin{align}
\label{e:zero_odd_cohomology_final_proof}
H^{\odd}_{S^1}(\Lambda_j,\Lambda_{j-1};\Q)\cong H^{\odd}_{S^1}(K_j;\Q)=0.
\end{align}
This allows us to apply Morse's lacunary principle, which proves Theorem~\ref{t:perfect}: the long exact sequences of the inclusions $\Lambda_{j-1}\subset\Lambda_j$ split in short exact sequences
\begin{align*}
 0
 \toup 
 H^*_{S^1}(\Lambda_j,\Lambda_{j-1};\Q)
 \toup 
 H^*_{S^1}(\Lambda_j;\Q)
 \toup 
 H^*_{S^1}(\Lambda_{j-1};\Q)
 \toup 
 0.
\end{align*}
Therefore
\begin{align*}
H^*_{S^1}(\Lambda_j;\Q) \cong \bigoplus_{1\leq k\leq j} H^*_{S^1}(\Lambda_k,\Lambda_{k-1};\Q).
\end{align*}
By Equation~\eqref{e:iso_modules_final_proof}, the local cohomology $H^d_{S^1}(\Lambda_j,\Lambda_{j-1};\Q)$ vanishes for $d<\iota_0(j)$. Since $\iota_0(j)\to\infty$ as $j\to\infty$, for any degree $d$ and for any large enough $j$ the inclusion induces an isomorphism
\begin{align*}
  H^{d}_{S^1}(\Lambda_{j+1};\Q) 
 \toup^{\cong}
 H^{d}_{S^1}(\Lambda_{j};\Q). 
\end{align*}
Therefore, 
\begin{align}
\label{e:perfect}
H^*_{S^1}(\Lambda;\Q) \cong \varprojlim H^*_{S^1}(\Lambda_j;\Q) \cong \varprojlim\bigoplus_{j\geq1} H^*_{S^1}(\Lambda_j,\Lambda_{j-1};\Q). 
\end{align}
By Lemma~\ref{l:domain_Clarke}, we have $H^*_{S^1}(\Lambda;\Q)=\Q[e]$, where $e\in H^2_{S^1}(\Lambda;\Q)$ is the Euler class. This, together with~\eqref{e:perfect}, implies that each local cohomology group in a given even degree $H^{2d}_{S^1}(\Lambda_j,\Lambda_{j-1};\Q)$ has rank at most one, and conversely for every even degree $2d$ there exists a unique $j$ such that $H^{2d}_{S^1}(\Lambda_j,\Lambda_{j-1};\Q)\neq0$, and it must be $H^{2d}_{S^1}(\Lambda_j,\Lambda_{j-1};\Q)\cong\Q$.

Consider the path-connected components $K'_j,K''_j\in\pi_0(K_j)$ such that 
\begin{align*}
\iota_0(j) & = \ind(K_j'),
\\
\iota_1(j) & = \ind(K_j'')+\nul(K_j'')-1=\ind(K_j'')+\dim(K_j'')-1.
\end{align*}
We have not proved yet that the critical sets $K_j$ are path-connected; a priori, $K_j'$ and $K_j''$ may be distinct path-connected components. Equation~\eqref{e:iso_modules_final_proof} and the conclusion of the previous paragraph imply that
\begin{gather*}
H^{\iota_0(j)}_{S^1}(\Lambda_j,\Lambda_{j-1};\Q)\cong H^{0}_{S^1}(K_{j}';\Q)\cong\Q,\\
H^{\iota_1(j)}_{S^1}(\Lambda_j,\Lambda_{j-1};\Q)\cong H^{\dim(K_j'')-1}_{S^1}(K_j'';\Q)\cong H^{\dim(K_j'')-1}(K_j''/S^1;\Q)\cong\Q.
\end{gather*}
Therefore
\begin{align}
\label{e:not_in_ker}
 e^{\iota_1(j)/2}\not\in \ker\big(H^*_{S^1}(\Lambda;\Q)\to H^*_{S^1}(\Lambda_j;\Q)\big),
\end{align}
but
\begin{align}
\label{e:in_ker}
 e^{\iota_0(j)/2}\in \ker\big(H^*_{S^1}(\Lambda;\Q)\to H^*_{S^1}(\Lambda_{j-1};\Q)\big).
\end{align}
We set
\begin{align*}
d_j:=\frac{\iota_1(j)-\iota_0(j)}{2}+1.
\end{align*}

\begin{lem}
\label{l:rational_homology_sphere}
Every critical set $K_j$ is path-connected, $d_j=(\dim(K_j)+1)/2$, and $H^*_{S^1}(K_j;\Q)=\Q[e]/(e^{d_j})$, where $e\in H^2_{S^1}(K_j;\Q)$ is the Euler class.
\end{lem}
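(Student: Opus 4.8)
The plan is to use the perfectness of $\Psi$ (Theorem~\ref{t:perfect}) together with the computation $H^*_{S^1}(\Lambda;\Q)=\Q[e]$ (Lemma~\ref{l:domain_Clarke}) to identify each truncated cohomology $H^*_{S^1}(\Lambda_j;\Q)$ with a truncated polynomial ring, and then to transfer this information to the individual critical sets $K_j$ via the splitting~\eqref{e:iso_modules_final_proof}. The first step is to check that the restriction $H^*_{S^1}(\Lambda;\Q)\to H^*_{S^1}(\Lambda_j;\Q)$ is surjective: perfectness provides surjections $H^*_{S^1}(\Lambda_j;\Q)\twoheadrightarrow H^*_{S^1}(\Lambda_{j-1};\Q)$, so the inverse system $\{H^*_{S^1}(\Lambda_j;\Q)\}_j$ has surjective transition maps and each term is a quotient of $\varprojlim_k H^*_{S^1}(\Lambda_k;\Q)$, which in turn is a quotient of $H^*_{S^1}(\Lambda;\Q)$ by~\eqref{e:inverse_limit}, the composite being the restriction. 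Since $H^*_{S^1}(\Lambda;\Q)=\Q[e]$ and, by~\eqref{e:iso_modules_final_proof} and~\eqref{e:same_rational_cohomology}, $H^*_{S^1}(\Lambda_j;\Q)$ is finite-dimensional (a finite sum of rational cohomologies of compact quotient orbifolds), the kernel of the restriction is a graded ideal $(e^{N_j})$ of $\Q[e]$, whence $H^*_{S^1}(\Lambda_j;\Q)\cong\Q[e]/(e^{N_j})$ as graded $\Q[e]$-algebras. The perfectness short exact sequence of the pair $(\Lambda_j,\Lambda_{j-1})$, which is an exact sequence of $\Q[e]$-modules, then identifies
\begin{align*}
 H^*_{S^1}(\Lambda_j,\Lambda_{j-1};\Q)\cong\ker\big(\Q[e]/(e^{N_j})\to\Q[e]/(e^{N_{j-1}})\big)=(e^{N_{j-1}})/(e^{N_j}),
\end{align*}
a \emph{cyclic} $\Q[e]$-module of $\Q$-dimension $m:=N_j-N_{j-1}\ge1$; in particular the quotient $H^*_{S^1}(\Lambda_j,\Lambda_{j-1};\Q)\big/ e\cdot H^*_{S^1}(\Lambda_j,\Lambda_{j-1};\Q)$ is one-dimensional over $\Q$.

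Next I would prove path-connectedness of $K_j$. By~\eqref{e:iso_modules_final_proof}, \eqref{e:same_rational_cohomology} and local freeness of the $S^1$-action,
\begin{align*}
 H^*_{S^1}(\Lambda_j,\Lambda_{j-1};\Q)\cong\bigoplus_{K\in\pi_0(K_j)}H^{*-\ind(K)}(K/S^1;\Q)
\end{align*}
as $\Q[e]$-modules. For each summand the class of $1\in H^0(K/S^1;\Q)$ lies outside the positive-degree subspace $e\cdot H^*(K/S^1;\Q)$, so each $K\in\pi_0(K_j)$ contributes at least one dimension to the quotient by $e$. Comparing with the one-dimensionality just obtained forces $\#\pi_0(K_j)\le1$, and since $K_j\ne\varnothing$ it is path-connected. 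Then~\eqref{e:iso_modules_final_proof} reads $H^*_{S^1}(\Lambda_j,\Lambda_{j-1};\Q)\cong H^{*-\ind(K_j)}_{S^1}(K_j;\Q)$, so $H^*_{S^1}(K_j;\Q)\cong\Q[e]/(e^m)$ as graded $\Q[e]$-modules (the grading being forced by $H^0_{S^1}(K_j;\Q)=\Q$); as $H^*_{S^1}(K_j;\Q)=H^*(K_j/S^1;\Q)$ is a graded-commutative algebra concentrated in even degrees, with $H^2$ spanned by the Euler class $e$ and $e^k\ne0$ for $k<m$, this is in fact a ring isomorphism.

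For the dimension formula I would plug $H^*_{S^1}(K_j;\Q)\cong\Q[e]/(e^m)$ into the Gysin sequence of the principal $S^1$-bundle $K_j\times ES^1\to K_j\times_{S^1}ES^1$, whose total space has the rational cohomology of $K_j$. In it, cup product by $e$ is an isomorphism $H^{2k}_{S^1}(K_j;\Q)\to H^{2k+2}_{S^1}(K_j;\Q)$ for $0\le k\le m-2$ and vanishes for $k\ge m-1$; unwinding the exact sequence then yields $H^d(K_j;\Q)\cong\Q$ for $d\in\{0,\,2m-1\}$ and $H^d(K_j;\Q)=0$ otherwise, i.e.\ $K_j$ is a rational homology $(2m-1)$-sphere. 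Since $K_j$ is a closed orientable manifold (being a contact manifold), its top nonvanishing rational cohomology group sits in degree $\dim(K_j)$, so $\dim(K_j)=2m-1$. Finally, as $K_j$ is connected we have $\ind(K_j)=\iota_0(j)$ and, by Lemma~\ref{l:GGM}, $\nul(K_j)=\dim(K_j)$, hence $d_j=\tfrac12\big(\iota_1(j)-\iota_0(j)\big)+1=\tfrac12\big(\dim(K_j)-1\big)+1=m$; substituting back gives $H^*_{S^1}(K_j;\Q)\cong\Q[e]/(e^{d_j})$ and $d_j=(\dim(K_j)+1)/2$.

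The main obstacle is the very first step: showing that $H^*_{S^1}(\Lambda;\Q)\to H^*_{S^1}(\Lambda_j;\Q)$ is onto — this is exactly where the perfectness of $\Psi$ enters decisively (through the Mittag-Leffler property of the inverse system), and it is what upgrades the mere vanishing of odd cohomology into the rigid truncated-polynomial-ring structure. Once $H^*_{S^1}(\Lambda_j;\Q)$ is pinned down, everything else is formal algebra of cyclic $\Q[e]$-modules and the Gysin sequence.
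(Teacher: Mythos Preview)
Your proof is correct and takes a somewhat cleaner, more module-theoretic route than the paper's. Both arguments rest on the same inputs established just before the lemma: perfectness of $\Psi$ (the short exact sequences), the identification $H^*_{S^1}(\Lambda;\Q)=\Q[e]$, and the $H^*(BS^1;\Q)$-module isomorphism~\eqref{e:iso_modules_final_proof}. The paper first records that each $H^{2d}_{S^1}(\Lambda_j,\Lambda_{j-1};\Q)$ has rank at most one, then chases the explicit class $e^{\iota_0(j)/2}$ through a commutative square involving $\smile e^{d_j-1}$ to show that one preselected component $K_j'\in\pi_0(K_j)$ already satisfies $H^*_{S^1}(K_j';\Q)=\Q[e]/(e^{d_j})$; connectedness is then argued by contradiction, since a second component would force rank $\geq2$ in some fixed even degree. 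You instead go straight to $H^*_{S^1}(\Lambda_j;\Q)\cong\Q[e]/(e^{N_j})$ via surjectivity of the restriction from $\Lambda$ (Mittag--Leffler plus finite-dimensionality), identify the relative cohomology as the \emph{cyclic} $\Q[e]$-module $(e^{N_{j-1}})/(e^{N_j})$, and read off connectedness from the observation that a cyclic module has one-dimensional cokernel under multiplication by $e$, while each path-component of $K_j$ contributes a separate generator in degree $\ind(K)$. Your approach is more uniform and avoids singling out a preferred component or a preferred degree; the paper's is more hands-on and makes the identities $\iota_0(j)=2N_{j-1}$, $\iota_1(j)=2N_j-2$ explicit from the start via~\eqref{e:not_in_ker}--\eqref{e:in_ker}, which it then reuses verbatim in the proof of Theorem~\ref{t:spec}. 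Your Gysin computation of $\dim(K_j)=2m-1$ is also slightly more self-contained than the paper's one-line deduction from the top degree of $H^*(K_j/S^1;\Q)$.
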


\begin{proof}
We consider the commutative diagram
\begin{equation*}
\begin{tikzcd}[row sep=large]
 H^{\iota_0(j)}_{S^1}(\Lambda_j,\Lambda_{j-1};\Q) 
 \arrow[r,"a^*",hookrightarrow] 
 \arrow[d,"\smallsmile e^{d_j-1}"]
 &
 H^{\iota_0(j)}_{S^1}(\Lambda_j;\Q) 
 \arrow[r,"b^*"] 
 \arrow[d,"\smallsmile e^{d_j-1}"]
 &
 H^{\iota_0(j)}_{S^1}(\Lambda_{j-1};\Q) 
 \\
 H^{\iota_1(j)}_{S^1}(\Lambda_j,\Lambda_{j-1};\Q) 
 \arrow[r,"c^*",hookrightarrow] 
 &
 H^{\iota_1(j)}_{S^1}(\Lambda_{j};\Q)
 \arrow[r,"d^*"]
 &
 H^{\iota_1(j)}_{S^1}(\Lambda_{j-1};\Q)
\end{tikzcd}
\end{equation*}
whose rows are exact. Equations~\eqref{e:not_in_ker} and \eqref{e:in_ker} imply that $e^{\iota_0(j)/2}\in\ker(b^*)\setminus\{0\}$, and therefore \[e^{\iota_0(j)/2}=a^*(\tilde e)\] 
for some $\tilde e\in H^{\iota_0(j)}_{S^1}(\Lambda_j,\Lambda_{j-1};\Q)$. The diagram implies that 
\[c^*(e^{d_j-1}\smallsmile\tilde e)=e^{\iota_1(j)/2}.\] In particular 
\begin{align*}
e^{d}\smallsmile\tilde e\neq0\mbox{ in }H^{\iota_0(j)+2d}_{S^1}(\Lambda_j,\Lambda_{j-1};\Q),
\quad\forall d=0,\ldots,d_j-1. 
\end{align*}
Under the $H^*(BS^1;\Q)$-modules isomorphism~\eqref{e:iso_modules_final_proof}, the cohomology class $e^{d}\smallsmile\tilde e$ is mapped to $e^d\in H^{2d}_{S^1}(K_j';\Q)$ for each $d=0,\ldots,(\iota_1(j)-\iota_0(j))/2$. This, together with~\eqref{e:zero_odd_cohomology_final_proof}, implies that 
\begin{align*}
H^*_{S^1}(K_j';\Q)=\frac{\Q[e]}{(e^{d_j})}.
\end{align*}
In particular
$\dim(K_j') = 2d_j-1=\iota_1(j)-\iota_0(j)+1$.
The critical set $K_j$ must then be path-connected, and thus coincide with $K_j'$. Indeed, if there were another path-connected component $K\in\pi_0(K_j)$ with $K\neq K_j'$, its even Morse index would satisfy \[\iota_0(j)\leq\ind(K)\leq\iota_1(j);\] 
however, both $H^{0}_{S^1}(K;\Q)\cong\Q$ and $H^{\ind(K)-\iota_0(j)}_{S^1}(K_j';\Q)\cong\Q$ would contribute to the local cohomology \[H^{\ind(K)}_{S^1}(\Lambda_j,\Lambda_{j-1};\Q)\] via the isomorphism~\eqref{e:iso_modules_final_proof}, contradicting the fact that $H^{\ind(K)}_{S^1}(\Lambda_j,\Lambda_{j-1};\Q)$ has rank~1.
\end{proof}

We recall once again that the non-empty strata $Y_k=\fix(\psi^{\tau/k})$ are closed contact submanifolds of $Y$, and in particular are orientable. Every such $Y_k$ is equivariantly diffeomorphic to its corresponding critical set $K_j$, with $\sigma_j=\tau/k$, via the map~\eqref{e:equiv_diff_crit_strat}. Therefore, in Theorem~\ref{t:main}, we can equivalently replace $Y_k$ with the critical set $K_j$.

\begin{proof}[Proof of Theorem~\ref{t:main}]
Since $K_j$ is a path-connected orientable closed manifold, we have
\begin{align*}
H^{0}(K_j;\Z)\cong H^{\dim(K_j)}(K_j;\Z)\cong\Z.
\end{align*}
Analogously, $H^{\dim(K_j)}(K_j;\Q)\cong\Q$.
Lemma~\ref{l:rational_homology_sphere}, together with the Gysin sequence 
\begin{align*}
 \ldots 
 \toup
 H^{*+1}(K_j;\Q) 
 \ttoup
 H^{*}_{S^1}(K_j;\Q) 
 \ttoup^{\smallsmile e}  
 H^{*+2}_{S^1}(K_j;\Q) 
 \ttoup
 H^{*+2}(K_j;\Q) 
 \toup
 \ldots 
\end{align*}
implies that $K_j$ is a rational homology sphere of dimension $\dim(K_j)=2d_j-1$, i.e.
\begin{align*}
 H^*(K_j;\Q)\cong H^*(S^{\dim(K_j)};\Q).
\end{align*}
Since $K_j$ is path-connected,  we have \[\iota_0(j)=\ind(K_j),\qquad \iota_1(j)=\iota_0(j)+\dim(K_j)-1.\] 
Equations~\eqref{e:not_in_ker} and~\eqref{e:in_ker} imply implies that $\iota_0(j+1)>\iota_1(j)$, and therefore 
\[\iota_0(j+1)=\iota_1(j)+2.\]
By Lemma~\ref{l:local_cohomology}(ii), the local cohomology $H^d(\Lambda_j,\Lambda_{j-1};\Z)$ can be non-trivial only if $\iota_0(j)\leq d\leq\iota_1(j)+1$, and
\begin{align*}
 H^d(\Lambda,\Lambda_j;\Z)\cong H^d(\Lambda_{j-1};\Z)\cong0,\qquad \forall d\in\{\iota_0(j)+1,\ldots,\iota_1(j)\}.
\end{align*}
This, together with Lemmas~\ref{l:local_cohomology}(i) and~\ref{l:domain_Clarke}, implies
\begin{align*}
 H^d(K_j;\Z) \cong H^{d+\iota_0(j)}(\Lambda_j,\Lambda_{j-1};\Z)
 \cong 
 H^{d+\iota_0(j)}(\Lambda;\Z)\cong0,\\
 \forall d\in\{1,\ldots,\dim(K_j)-1\}.
\end{align*}
Summing up, we proved that $H^*(K_j;\Z)\cong H^*(S^{\dim(K_j)};\Z)$. By the universal coefficient theorem, this is equivalent to $H_*(K_j;\Z)\cong H_*(S^{\dim(K_j)};\Z)$
\end{proof}

\begin{proof}[Proof of Theorem~\ref{t:spec}]
For each $j\geq1$, Equation~\eqref{e:not_in_ker} implies that $c_{\iota_1(j)/2}(Y)\leq \sigma_j$, whereas Equation~\eqref{e:in_ker} implies that $c_{\iota_0(j)/2}(Y)> \sigma_{j-1}$. Overall,
\[\sigma_j=c_{\iota_0(j)/2}(Y)=c_{\iota_1(j)/2}(Y)<c_{1+\iota_1(j)/2}(Y),\]
and, by Lemma~\ref{l:rational_homology_sphere}, $d_j=1+(\iota_1(j)-\iota_0(j))/2=(\dim(K_j)+1)/2$. Therefore, the sequence of Ekeland-Hofer spectral invariants $c_1(Y), c_2(Y),c_3(Y),\ldots$
is precisely
\[
\underbrace{\sigma_1,\ldots,\sigma_1}_{\times d_1},\underbrace{\sigma_2,\ldots,\sigma_2}_{\times d_2},\underbrace{\sigma_3,\ldots,\sigma_3}_{\times d_3},\ldots
\qedhere
\]
\end{proof}

\appendix

\section{Euler class of fibered products}
\label{a:Euler}

Let $S^1\subset\C$ be the unit circle in the complex plane.
If $X$ is a space equipped with an $S^1$ action $(e^{i\theta},x)\to e^{i\theta}\cdot x$, for any integer $q>0$ we denote by $X^q$ the same space equipped with the $S^1$ action $(e^{i\theta},x)\to e^{iq\theta}\cdot x$. In the proof of Proposition~\ref{p:torsion}, we will need the following elementary fact.

\begin{lem}
\label{l:Euler}
Consider two principal $S^1$ bundles $C_1\to B_1$ and $C_2\to B_2$, an integer $q>0$, and the fibered product $C_1^q\times_{S^1}C_2$ equipped with the $S^1$-action
\begin{align*}
e^{i\theta}\cdot[c_1,c_2]=[e^{iq\theta}c_1,c_2]=[c_1,e^{-i\theta}c_2],\qquad\forall e^{i\theta}\in S^1,\ [c_1,c_2]\in C_1^q\times_{S^1}C_2,
\end{align*}
so that the quotient projection $C_1^q\times_{S^1}C_2\to B_1\times B_2$ is a principal $S^1$-bundle. Its Euler class is
\begin{align*}
 e_1\otimes1 - q\cdot1\otimes e_2\in H^*(B_1\times B_2;\Z),
\end{align*}
where $e_i\in H^2(B_i;\Z)$ is the Euler class of $C_i\to B_i$.
\end{lem}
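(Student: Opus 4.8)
The plan is to identify the principal $S^1$-bundle $C_1^q\times_{S^1}C_2\to B_1\times B_2$ with the unit circle bundle of an explicit complex line bundle over $B_1\times B_2$, and then to read off its Euler class as the first Chern class of that line bundle. Put Hermitian metrics on the complex line bundles $L_i\to B_i$ whose unit circle bundles are $C_i$, so that $c_1(L_i)=e_i$; for a vector $v$ in a fibre of $L_i$ write $v^*$ for the covector $\langle\,\cdot\,,v\rangle$ in the dual fibre, and note $(\lambda v)^*=\bar\lambda\,v^*$ for $\lambda\in\C$. Let $\pi_i\colon B_1\times B_2\to B_i$ ($i=1,2$) be the projections, so that $c_1(\pi_1^*L_1)=e_1\otimes1$ and $c_1(\pi_2^*L_2)=1\otimes e_2$.

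I would then consider the map
\[
 \Xi\colon C_1^q\times_{S^1}C_2 \longrightarrow S\big(\pi_1^*L_1\otimes\pi_2^*L_2^{\otimes(-q)}\big),
 \qquad
 \Xi\big([c_1,c_2]\big)=c_1\otimes(c_2^*)^{\otimes q},
\]
where $L_2^{\otimes(-q)}:=(L_2^*)^{\otimes q}$ and $S(\cdot)$ denotes the unit circle bundle. It is well defined because, for $\phi\in\R$, replacing $(c_1,c_2)$ by $(e^{iq\phi}c_1,e^{i\phi}c_2)$ multiplies $c_1$ by $e^{iq\phi}$ and $(c_2^*)^{\otimes q}$ by $(e^{-i\phi})^{q}=e^{-iq\phi}$, hence leaves $c_1\otimes(c_2^*)^{\otimes q}$ unchanged; it covers the identity of $B_1\times B_2$; and on each fibre it is a bijection onto the unit circle (a short check with the defining relation of the fibred product: for fixed $(c_1,c_2)$ the assignment $w\mapsto[c_1\cdot w,c_2]$ parametrises the fibre, and $\Xi$ sends it to $w\cdot(c_1\otimes(c_2^*)^{\otimes q})$). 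The one point needing care is equivariance: since $(e^{-i\theta}c_2)^*=e^{i\theta}c_2^*$, one computes $\Xi\big(e^{i\theta}\cdot[c_1,c_2]\big)=\Xi\big([c_1,e^{-i\theta}c_2]\big)=e^{iq\theta}\,\Xi\big([c_1,c_2]\big)$, so the residual $S^1$-action in the statement has kernel $\mu_q$, and its effective action — that of $S^1/\mu_q$, which must be identified with $S^1$ via $z\mapsto z^q$ for the quotient map to be a principal $S^1$-bundle — is intertwined by $\Xi$ with the standard scalar action on the circle bundle. Hence $\Xi$ is an isomorphism of principal $S^1$-bundles.

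Granting this, the statement follows from the standard facts that the Euler class of the unit circle bundle of a Hermitian line bundle is its first Chern class, that $c_1$ is natural and additive under tensor products, and that $c_1(L^{\otimes(-q)})=-q\,c_1(L)$:
\begin{align*}
 e\big(C_1^q\times_{S^1}C_2\big)
 &=c_1\big(\pi_1^*L_1\otimes\pi_2^*L_2^{\otimes(-q)}\big)\\
 &=\pi_1^*c_1(L_1)-q\,\pi_2^*c_1(L_2)
 =e_1\otimes1-q\cdot1\otimes e_2 .
\end{align*}
Two variants lead to the same answer. One can avoid line bundles and compute with \v{C}ech cocycles: choosing trivialising sections $s_\alpha$ of $C_1$ and $t_\gamma$ of $C_2$ with transition cocycles $g_{\alpha\beta}$ and $h_{\gamma\delta}$ representing $e_1$ and $e_2$, the sections $(b_1,b_2)\mapsto[s_\alpha(b_1),t_\gamma(b_2)]$ trivialise $C_1^q\times_{S^1}C_2$, and normalising the representatives in the $C_2$-slot shows their transition cocycle to be $g_{\alpha\beta}\cdot h_{\gamma\delta}^{-q}$, which the exponential sequence sends to $e_1\otimes1-q\cdot1\otimes e_2$. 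Alternatively, one identifies the classifying map of the bundle with the composite of $f_1\times f_2\colon B_1\times B_2\to BS^1\times BS^1$ (where $f_i$ classifies $C_i$) and the map inducing $(u_1,u_2)\mapsto u_1-q\,u_2$ on $H^2$. In every route the only real subtlety is the bookkeeping of which circle acts — the residual circle of the statement versus the effective one obtained after killing $\mu_q$ and reidentifying with $S^1$ via $z\mapsto z^q$ — and it is exactly this reidentification that transfers the factor $q$ off $C_1$, where the twist sits, onto $e_2$ in the final formula.
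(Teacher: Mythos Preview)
Your argument is correct and follows essentially the same strategy as the paper: identify $C_1^q\times_{S^1}C_2$ with the unit circle bundle of the line bundle $\pi_1^*L_1\otimes\pi_2^*L_2^{\otimes(-q)}$ (the paper writes the equivalent $V_1\otimes\overline{V_2}{}^{\otimes q}$ using conjugation rather than duals, which is the same for Hermitian line bundles) and read off the Euler class as its first Chern class. You are in fact more explicit than the paper about the $\mu_q$ kernel of the residual action and the reidentification $S^1/\mu_q\cong S^1$ via $z\mapsto z^q$; the paper leaves this step implicit in the phrase ``$S^1$-equivariant map''.
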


\begin{proof}
The statement is a bundle version of the following linear algebra remark. Consider the fibered product $(S^1)^q\times_{S^1} S^1$ equipped with the $S^1$-action
\begin{align*}
e^{i\theta}\cdot[e^{is},e^{it}]=[e^{i(q\theta+s)},e^{it}]=[e^{is},e^{i(-\theta+t)}],
\end{align*}
for all $e^{i\theta}\in S^1$ and $[e^{is},e^{it}]\in(S^1)^q\times_{S^1} S^1$. Next, consider the tensor product $\C\otimes\overline{\C}{}^{\otimes q}$, where the complex conjugacy means that
\begin{align*}
z\otimes 1\otimes\ldots\otimes 1 
= 
1\otimes \overline z \otimes\ldots\otimes 1
= \ldots =
1\otimes1\otimes \ldots\otimes\overline z,\qquad\forall z\in\C.
\end{align*}
The unit circle $S\subset\C\otimes\overline{\C}{}^{\otimes q}$ is diffeomorphic to $(S^1)^q\times_{S^1} S^1$ via the $S^1$-equivariant map
\begin{align*}
 \psi:(S^1)^q\times_{S^1} S^1\toup^{\cong} S,\qquad \psi([e^{is},e^{it}])=e^{is}\otimes e^{it}\otimes \ldots\otimes e^{it}.
\end{align*}

Let $V_i\to B_i$ be the complex line bundle having $C_i\to B_i$ as unit-circle sub-bundle. By the remark in the previous paragraph, the principal $S^1$-bundle $C_1^q\times_{S^1}C_2\to B_1\times B_2$ is isomorphic to the unit-circle sub-bundle of the complex line bundle
\begin{align*}
 V_1\otimes\overline{V_2}{}^{\otimes q}\to B_1\times B_2.
\end{align*}
Finally, we recall that the Euler class of a unit-circle sub-bundle is the first Chern class of the ambient complex line bundle. Since the first Chern class changes sign under complex conjugation of the bundle, and is additive under tensor products of bundles, we readily obtain our assertion.
\end{proof}

\bibliography{_biblio}

\providecommand{\bysame}{\leavevmode\hbox to3em{\hrulefill}\thinspace}
\providecommand{\MR}{\relax\ifhmode\unskip\space\fi MR }
\providecommand{\MRhref}[2]{%
  \href{http://www.ams.org/mathscinet-getitem?mr=#1}{#2}
}
\providecommand{\href}[2]{#2}
\begin{thebibliography}{ABHSa17}

\bibitem[ABHSa17]{Abbondandolo:2017xz}
A.~Abbondandolo, B.~Bramham, U.~L. Hryniewicz, and P.~A.~S. Salom\~ao, \emph{A
  systolic inequality for geodesic flows on the two-sphere}, Math. Ann.
  \textbf{367} (2017), no.~1-2, 701--753.

\bibitem[AK22]{Abbondandolo:2019aa}
A.~Abbondandolo and J.~Kang, \emph{Symplectic homology of convex domains and
  {C}larke's duality}, Duke Math. J. \textbf{171} (2022), no.~3, 739--830.

\bibitem[Bes78]{Besse:1978pr}
A.~L. Besse, \emph{Manifolds all of whose geodesics are closed}, Ergebnisse der
  Mathematik und ihrer Grenzgebiete, vol.~93, Springer-Verlag, Berlin-New York,
  1978.

\bibitem[Bre72]{Bredon:1972aa}
G.~E. Bredon, \emph{Introduction to compact transformation groups}, Academic
  Press, New York-London, 1972, Pure and Applied Mathematics, Vol. 46.

\bibitem[BW58]{Boothby:1958ss}
W.~M. Boothby and H.~C. Wang, \emph{On contact manifolds}, Ann. of Math. (2)
  \textbf{68} (1958), 721--734.

\bibitem[CGM20]{Cristofaro-Gardiner:2020aa}
D.~Cristofaro-Gardiner and M.~Mazzucchelli, \emph{The action spectrum
  characterizes closed contact 3-manifolds all of whose {R}eeb orbits are
  closed}, Comment. Math. Helv. \textbf{95} (2020), no.~3, 461--481.

\bibitem[Cla79]{Clarke:1979aa}
F.~H. Clarke, \emph{A classical variational principle for periodic
  {H}amiltonian trajectories}, Proc. Amer. Math. Soc. \textbf{76} (1979),
  no.~1, 186--188.

\bibitem[EH87]{Ekeland:1987aa}
I.~Ekeland and H.~Hofer, \emph{Convex {H}amiltonian energy surfaces and their
  periodic trajectories}, Comm. Math. Phys. \textbf{113} (1987), no.~3,
  419--469.

\bibitem[EH89]{Ekeland:1989aa}
\bysame, \emph{Symplectic topology and {H}amiltonian dynamics}, Math. Z.
  \textbf{200} (1989), no.~3, 355--378.

\bibitem[EH90]{Ekeland:1990aa}
\bysame, \emph{Symplectic topology and {H}amiltonian dynamics. {II}}, Math. Z.
  \textbf{203} (1990), no.~4, 553--567.

\bibitem[Eke90]{Ekeland:1990lc}
I.~Ekeland, \emph{Convexity methods in {H}amiltonian mechanics}, Ergebnisse der
  Mathematik und ihrer Grenzgebiete (3), vol.~19, Springer-Verlag, Berlin,
  1990.

\bibitem[FR78]{Fadell:1978aa}
E.~R. Fadell and P.~H. Rabinowitz, \emph{Generalized cohomological index
  theories for {L}ie group actions with an application to bifurcation questions
  for {H}amiltonian systems}, Invent. Math. \textbf{45} (1978), no.~2,
  139--174.

\bibitem[GG81]{Gromoll:1981kl}
D.~Gromoll and K.~Grove, \emph{On metrics on {$S^2$} all of whose geodesics are
  closed}, Invent. Math. \textbf{65} (1981), 175--177.

\bibitem[GGM21]{Ginzburg:2019ab}
V.~L. Ginzburg, B.~Z. G\"urel, and M.~Mazzucchelli, \emph{On the spectral
  characterization of {B}esse and {Z}oll {R}eeb flows}, Ann. Inst. H.
  Poincar{\'e} Anal. Non Lin{\'e}aire \textbf{38} (2021), no.~3, 549--576.

\bibitem[GL18]{Geiges:2018aa}
H.~Geiges and C.~Lange, \emph{Seifert fibrations of lens spaces}, Abh. Math.
  Semin. Univ. Hambg. \textbf{88} (2018), no.~1, 1--22.

\bibitem[Hat02]{Hatcher:2002dt}
A.~Hatcher, \emph{Algebraic topology}, Cambridge University Press, 2002.

\bibitem[Iri19]{Irie:2019aa}
K.~Irie, \emph{Symplectic homology capacity of convex bodies and loop space
  homology}, arXiv:1907.09749, 2019.

\bibitem[LS17]{Lin:2017aa}
S.~Lin and B.~Schmidt, \emph{Real projective spaces with all geodesics closed},
  Geom. Funct. Anal. \textbf{27} (2017), no.~3, 631--636.

\bibitem[MS18]{Mazzucchelli:2018aa}
M.~Mazzucchelli and S.~Suhr, \emph{A characterization of {Z}oll {R}iemannian
  metrics on the 2-sphere}, Bull. Lond. Math. Soc. \textbf{50} (2018), no.~6,
  997--1006.

\bibitem[MS22]{Mazzucchelli:2018pb}
\bysame, \emph{A min-max characterization of {Z}oll {R}iemannian metrics},
  Math. Proc. Cambridge Philos. Soc. \textbf{172} (2022), no.~3, 591--615.

\bibitem[Pri09]{Pries:2009aa}
C.~Pries, \emph{Geodesics closed on the projective plane}, Geom. Funct. Anal.
  \textbf{18} (2009), no.~5, 1774--1785.

\bibitem[RW17]{Radeschi:2017dz}
M.~Radeschi and B.~Wilking, \emph{On the {B}erger conjecture for manifolds all
  of whose geodesics are closed}, Invent. Math. \textbf{210} (2017), 911--962.

\bibitem[Sch82]{Schultz:1982aa}
R.~Schultz, \emph{Differentiability and the {P}. {A}. {S}mith theorems for
  spheres. {I}. {A}ctions of prime order groups}, Current trends in algebraic
  topology, {P}art 2 ({L}ondon, {O}nt., 1981), CMS Conf. Proc., vol.~2, Amer.
  Math. Soc., Providence, R.I., 1982, pp.~235--273.

\bibitem[Sik90]{Sikorav:1990aa}
J.-C. Sikorav, \emph{Syst{\`e}mes hamiltoniens et topologie symplectique}, ETS
  Editrice Pisa, 1990.

\bibitem[Tau95]{Taubes:1995aa}
C.~H. Taubes, \emph{The {S}eiberg-{W}itten and {G}romov invariants}, Math. Res.
  Lett. \textbf{2} (1995), no.~2, 221--238.

\bibitem[Tex18]{Texier:2018aa}
B.~Texier, \emph{Basic matrix perturbation theory}, Enseign. Math. \textbf{64}
  (2018), no.~3-4, 249--263.

\bibitem[Tho76]{Thomas:1976aa}
C.~B. Thomas, \emph{Almost regular contact manifolds}, J. Differential Geometry
  \textbf{11} (1976), no.~4, 521--533.

\bibitem[Ust99]{Ustilovsky:1999vr}
I.~Ustilovsky, \emph{Infinitely many contact structures on {$S^{4m+1}$}},
  Internat. Math. Res. Notices (1999), no.~14, 781--791.

\bibitem[Wad75]{Wadsley:1975sp}
A.~W. Wadsley, \emph{Geodesic foliations by circles}, J. Differ. Geom.
  \textbf{10} (1975), no.~4, 541--549.

\end{thebibliography}
\bibliographystyle{amsalpha}

\end{document}